\tikzset{
	dot/.style={circle,fill=black,draw=black,inner sep=0pt,minimum size=0.5mm},
	>=stealth,
	}
\def\DeclareSymbol#1#2#3{\expandafter\gdef\csname MH@symb@#1\endcsname{\tikz[baseline=#2,scale=0.15]{#3}}}
\def\<#1>{\csname MH@symb@#1\endcsname}
\newtheorem{assumption}[lemma]{Assumption}
\definecolor{darkred}{rgb}{0.9,0.1,0.1}
\def\comment#1{\ifthenelse{\isodd{\value{page}}}{\marginpar{\raggedright\scriptsize{\textcolor{darkred}{#1}}}}{\marginpar{\raggedleft\scriptsize{\textcolor{darkred}{#1}}}}}  
\let\D\CD
\def\TT{\mathscr{T}}
\def\bX{{\mathbf X}}
\def\XX{{\mathbb X}}
\def\E{{\mathbf E}}
\def\T{\mathbf{T}}
\def\/{|\!|\!|}
\def\${|\!|\!|}
\def\l|{\left|\!\left|\!\left|}
\def\r|{\right|\!\right|\!\right|}
\def\s{\mathfrak{s}}
\def\K{\mathfrak{K}}
\def\RR{\mathfrak{R}}
\def\sym{\mathrm{sym}}
\def\W{\mathbb{W}}
\def\PPi{\boldsymbol{\Pi}}
\def\Ren{\mathscr{R}}
\begin{document}

\title{Introduction to Regularity Structures}
\author{Martin Hairer}
\institute{The University of Warwick, \email{M.Hairer@Warwick.ac.uk}}

\maketitle

\begin{abstract}
These are short notes from a series of lectures given at the University of Rennes in June 2013, 
at the University of Bonn in July 2013,
at the XVII$^{\textit{th}}$ Brazilian School of Probability in Mambucaba in August 2013, and at ETH Zurich
in September 2013.
They give a concise overview of the
theory of regularity structures as exposed in the article \cite{Regular}.
In order to allow to focus on the conceptual aspects of the theory, many proofs are
omitted and statements are simplified. We focus on applying the theory to the problem
of giving a solution theory to the stochastic quantisation equations for the Euclidean
$\Phi^4_3$ quantum field theory.
\end{abstract}

\tableofcontents

\section{Introduction}

Very recently, a new theory of ``regularity structures'' was introduced \cite{Regular}, 
unifying various flavours of the theory of (controlled)
rough paths (including Gubinelli's theory of controlled rough paths \cite{Max},
as well as his branched rough paths \cite{Trees}), as well as the usual
Taylor expansions. While it has its roots in the theory of rough paths \cite{Lyons},
the main advantage of this new theory is that it is no longer tied to the
one-dimensionality of the time parameter, which makes it also suitable for the description of 
solutions to stochastic \textit{partial} differential equations, rather than just stochastic ordinary 
differential equations. The aim of this article is to give a concise
survey of the theory while focusing on the construction of the dynamical
$\Phi^4_3$ model. While the exposition aims to be reasonably self-contained (in particular
no prior knowledge of the theory of rough paths is assumed), most of the
proofs will only be sketched.

The main achievement of the theory of
regularity structures is that it allows to give a (pathwise!) meaning to ill-posed stochastic PDEs that arise naturally when trying to describe the macroscopic
behaviour of models from statistical mechanics near criticality. One
example of such an equation is the KPZ
equation arising as a natural model for one-dimensional interface motion \cite{KPZOrig,MR1462228,KPZ}:
\begin{equ}
\d_t h = \d_x^2 h + (\d_x h)^2 + \xi - C\;.
\end{equ}
Another example is the dynamical $\Phi^4_3$ model arising for example in the stochastic quantisation of Euclidean
quantum field theory \cite{ParisiWu,MR815192,AlbRock91,MR2016604,Regular}, as well as
a universal model for phase coexistence near the critical point \cite{MR1661764}:
\begin{equ}
\d_t \Phi = \Delta \Phi  + C \Phi - \Phi^3 + \xi\;.
\end{equ}
In both of these examples, $\xi$ formally denotes space-time white noise, $C$ is an arbitrary constant (which will actually turn out to be
infinite in some sense!), and we consider
a bounded square spatial domain with periodic boundary conditions. 
In the case of the dynamical $\Phi^4_3$ model, the spatial
variable has dimension $3$, while it has dimension $1$ in the case of the KPZ equation. While a full exposition
of the theory is well beyond the scope of this short introduction, we aim to give a concise overview to most of its
concepts. In most cases,
we will only state results in a rather informal way and give some ideas as to how the proofs work, focusing on
conceptual rather than technical issues. 
The only exception is the ``reconstruction theorem'', Theorem~\ref{theo:reconstruction} below,
which is the linchpin of the whole theory. Since its proof (or rather a slightly simplified version of it)
is relatively concise, we provide a fully self-contained version.
For precise statements and complete proofs of most of 
the results exposed here, we refer to the original article \cite{Regular}.

Loosely speaking, the type of well-posedness results that can be proven with the help of the 
theory of regularity structures can be formulated as follows.

\begin{theorem}
Let $\xi_\eps = \delta_\eps * \xi$ denote the regularisation of space-time 
white noise with a
compactly supported smooth mollifier $\delta_\eps$ that is scaled by $\eps$ in the spatial direction(s)
and by $\eps^2$ in the time direction. Denote by $h_\eps$ and $\Phi_\eps$ the solutions to
\begin{equs}
\d_t h_\eps &= \d_x^2 h_\eps + (\d_x h_\eps)^2 - C_\eps + \xi_\eps\;,\\
\d_t \Phi_\eps &= \Delta \Phi_\eps + \tilde C_\eps \Phi_\eps - \Phi_\eps^3 + \xi_\eps \;. 
\end{equs}
Then, there exist choices of constants $C_\eps$ and $\tilde C_\eps$ diverging as $\eps \to 0$, as
well as processes $h$ and $\Phi$ such that $h_\eps \to h$ and $\Phi_\eps \to \Phi$ in probability. 
Furthermore, while the constants $C_\eps$ and $\tilde C_\eps$ {\em do} depend crucially on the 
choice of mollifiers $\delta_\eps$, the limits $h$ and $\Phi$ do {\em not} depend on them.
\end{theorem}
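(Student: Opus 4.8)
The plan is to recast both equations as fixed point problems inside a tailor-made regularity structure, to solve an \emph{abstract}, model-independent version of each by a contraction argument, and to inject the probabilistic information about the noise only at the very end, through the continuity of the solution map in the underlying model. Concretely, I would first build a regularity structure $(\mathcal T,\mathcal G)$ rich enough to describe the small-scale behaviour of the solution: start from the abstract polynomials $\mathbf{1}, X_i$, adjoin a symbol $\Xi$ for the noise of homogeneity $\alpha$ (here $\alpha=-\tfrac32-\kappa$ for KPZ and $\alpha=-\tfrac52-\kappa$ for $\Phi^4_3$, reflecting the parabolic scaling of space-time white noise in dimension $1+1$ resp.\ $3+1$), and close up under the abstract integration map $\mathcal I$ --- modelling convolution with the heat kernel, raising homogeneity by $2$ --- and under the product dictated by the nonlinearity, discarding all formal monomials of homogeneity above a suitable threshold $\gamma$; subcriticality is precisely what makes only finitely many symbols survive. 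Using the Schauder estimate for $\mathcal I$, continuity of the product, and the reconstruction theorem (Theorem~\ref{theo:reconstruction}), the mild formulation becomes a fixed point problem $\mathcal U=\mathcal P\,\mathbf{1}_{+}F(\mathcal U)+(\text{lift of the initial datum})$ in a space $\mathcal D^\gamma$ of modelled distributions ($\mathcal P$ the abstract heat operator, $\mathbf{1}_{+}$ the indicator of positive times, $F$ built from the abstract spatial derivative for KPZ resp.\ the abstract third power for $\Phi^4_3$, plus lower-order terms), solved by the Banach fixed point theorem on a short interval, with a blow-up alternative giving a maximal solution. The structural fact I need from this step is that $Z\mapsto\mathcal U(Z)$, hence $Z\mapsto\mathcal R^Z\mathcal U(Z)$, is continuous --- locally Lipschitz on bounded sets --- as a function of the admissible model $Z$.

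Next, for each fixed $\eps>0$ the mollified noise $\xi_\eps$ is smooth, hence lifts to a \emph{canonical} admissible model $Z_\eps^{\mathrm{can}}$ obtained by literally substituting $\xi_\eps$ for $\Xi$ and performing the integrations, and for that model one checks that reconstructing the fixed point returns exactly the classical solution of the mollified equation, \emph{without} counterterm. The difficulty is that $Z_\eps^{\mathrm{can}}$ does \emph{not} converge as $\eps\to0$. I would therefore identify a finite-dimensional renormalisation group $\mathfrak R$ acting on admissible models and a sequence $M_\eps\in\mathfrak R$ such that the renormalised models $\hat Z_\eps=M_\eps Z_\eps^{\mathrm{can}}$ converge \emph{in probability}, in the complete metric space of admissible models, to a limiting model $\hat Z$. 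This is where essentially all the work lies: each component of $\hat Z_\eps$ is expanded into finitely many terms living in a fixed Wiener chaos, the resulting kernels are estimated by hand --- the subtractions built into $M_\eps$ being chosen precisely to cancel the divergent ``contraction'' diagrams --- and one concludes by Gaussian hypercontractivity together with a Kolmogorov-type criterion for models. The diverging constants $C_\eps,\tilde C_\eps$ are then read off as the coefficients defining $M_\eps$; they depend on $\delta_\eps$, whereas $\hat Z$ admits an intrinsic description and does not.

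Finally, a short algebraic computation identifies what the abstract equation driven by $\hat Z_\eps$ actually solves after reconstruction: not the mollified equation, but the mollified equation \emph{with} the counterterm $-C_\eps$ (resp.\ $+\tilde C_\eps\Phi_\eps$) --- this ``renormalised equation'' identity is exactly what dictates the form of the counterterms in the statement, so that $h_\eps=\mathcal R^{\hat Z_\eps}\mathcal U(\hat Z_\eps)$ and similarly for $\Phi_\eps$. Combining the continuity from the first step with the convergence $\hat Z_\eps\to\hat Z$ in probability yields $h_\eps\to h:=\mathcal R^{\hat Z}\mathcal U(\hat Z)$ and $\Phi_\eps\to\Phi$ in probability (up to the random, a.s.\ positive blow-up time, which one also controls), and since $\hat Z$ is mollifier-independent so are $h$ and $\Phi$. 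The main obstacle is squarely the convergence of the renormalised models: obtaining the stochastic bounds on $\hat Z_\eps$ uniformly in $\eps$, which is exactly what forces the introduction of the maps $M_\eps$ and the divergent constants.
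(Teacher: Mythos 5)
Your proposal follows exactly the strategy the paper develops: build the regularity structure from abstract polynomials, $\Xi$, and $\CI$ truncated by subcriticality; recast the mild equation as a fixed point in $\D^\gamma$ solved by contraction with a locally Lipschitz dependence on the model; lift mollified noise to the canonical admissible model; introduce the renormalisation group and a sequence $M_\eps$ so that the renormalised models converge in probability via Wiener-chaos estimates; and identify the renormalised equation to read off the diverging constants. This matches the paper's outline (Sections 2, 4--7) in every essential step, so there is nothing to contrast.
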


\begin{remark}
We made a severe abuse of notation here since the space-time white noise
appearing in the equation for $h_\eps$ is on $\R \times \T^1$, while the
one appearing in the equation for $\Phi_\eps$ is on $\R \times \T^3$. (Here
we denote by $\T^n$ the $n$-dimensional torus.)
\end{remark}

\begin{remark}
We have not explicited the topology in which the convergence takes place in these
examples. In the case of the KPZ equation, one actually obtains convergence in
some space of space-time H\"older continuous functions. In the case of the 
dynamical $\Phi^4_3$ model, convergence takes place in some space of
space-time distributions. One caveat that also has to be dealt with in the
latter case is that the limiting process $\Phi$ may in principle 
explode in finite time for some instances of the driving noise.
\end{remark}

From a ``philosophical'' perspective, the theory of regularity structures is inspired by the 
theory of controlled rough paths \cite{Lyons,Max,LyonsStFlour}, so let us rapidly survey the main ideas of that theory.
The setting of the theory of controlled rough paths is the following. Let's say that we want to solve a
controlled differential equation of the type 
\begin{equ}[e:SDE]
dY = f(Y)\,dX(t)\;,
\end{equ}
 where $X \in \CC^\alpha$ is a rather rough function
(say a typical sample path for an $m$-dimensional Brownian motion). 
It is a classical result by Young \cite{Young} that the Riemann-Stieltjes
integral $(X,Y) \mapsto \int_0^\cdot Y\,dX$ makes sense as a continuous map from $\CC^\alpha \times \CC^\alpha$
into $\CC^\alpha$ if and only if $\alpha > {1\over 2}$. As a consequence, ``na\"\i ve'' approaches to 
a pathwise solution to \eref{e:SDE} are bound to fail if $X$ has the regularity of Brownian motion. 

The main idea is to exploit the a priori ``guess'' that solutions to \eref{e:SDE} should ``look like $X$ at small scales''.
More precisely, one would naturally expect the solution $Y$ to satisfy 
\begin{equ}[e:expY]
Y_t = Y_s + Y'_s X_{s,t} + \CO(|t-s|^{2\alpha})\;,
\end{equ}
where we wrote $X_{s,t}$ as a shorthand for the increment $X_t - X_s$. As a matter of fact, one would
expect to have such an expansion with $Y' = f(Y)$. Denote by $\CC^\alpha_X$ the space of pairs $(Y,Y')$
satisfying \eref{e:expY} for a given ``model path'' $X$.
It is then possible to simply ``postulate'' the values of the
integrals 
\begin{equ}[e:defXX]
\XX_{s,t} =: \int_s^t X_{s,r}\otimes dX_r\;,
\end{equ}
satisfying ``Chen's relations''
\begin{equ}[e:constr]
\XX_{s,t} - \XX_{s,u} -\XX_{u,t} =  X_{s,u}\otimes X_{u,t}\;,
\end{equ}
as well as the analytic bound $|\XX_{s,t}| \lesssim |t-s|^{2\alpha}$,
and to exploit this additional data to give a coherent definition of expressions of the type $\int Y\,dX$,
provided that the path $X$ is ``enhanced'' with its iterated integrals $\XX$ and $Y$ is a ``controlled path''
of the type \eref{e:expY}. See for example \cite{Max} for more information or \cite{RoughBurgers} for a concise exposition of
this theory.

Compare \eref{e:expY} to the fact that a function $f\colon \R \to \R$ 
is of class $\CC^\gamma$ with $\gamma \in (k,k+1)$ if for every $s \in \R$ there exist coefficients
$f_s^{(1)},\ldots,f_s^{(k)}$ such that 
\begin{equ}[e:expF]
f_t = f_s + \sum_{\ell=1}^k f_s^{(\ell)} (t-s)^\ell + \CO(|t-s|^\gamma)\;.
\end{equ}
Of course, $f_s^{(\ell)}$ is nothing but the $\ell$th derivative of $f$ at the point $s$, 
divided by $\ell!$. In this sense, one should really think of a controlled rough path $(Y,Y') \in \CC^\alpha_X$
as a $2\alpha$-H\"older continuous function, but with respect to a ``model'' determined
 by the function $X$, rather than by the 
usual Taylor polynomials.
This formal analogy between controlled rough paths and Taylor expansions suggests that it might be fruitful
to systematically investigate what are the ``right'' objects that could possibly take the place of Taylor
polynomials, while still retaining many of their nice properties.

\subsection*{Acknowledgements}

{\small
Financial support from the Leverhulme trust through a leadership award
is gratefully acknowledged.
}

\section{Definitions and the reconstruction operator}

The first step in such an endeavour is to set up an algebraic structure reflecting the properties of Taylor
expansions. First of all, such a structure should contain a vector space $T$ that will contain the
coefficients of our expansion. It is natural to assume that $T$ has a graded structure: $T = \bigoplus_{\alpha \in A} T_\alpha$,
for some set $A$ of possible ``homogeneities''. For example, in the case of the usual Taylor expansion \eref{e:expF},
it is natural to take for $A$ the set of natural numbers and to have $T_\ell$ contain the coefficients corresponding
to the derivatives of order $\ell$. In the case of controlled rough paths however, it is natural
to take $A = \{0,\alpha\}$, to have again $T_0$ contain the value of the function $Y$ at any time $s$,
and to have $T_\alpha$ contain the Gubinelli derivative $Y'_s$. This reflects the fact that the 
``monomial'' $t \mapsto X_{s,t}$ only vanishes at order $\alpha$ near $t = s$, while the usual monomials $t \mapsto (t-s)^\ell$
vanish at integer order $\ell$.

This however isn't the full algebraic structure describing Taylor-like expansions. Indeed, one of the characteristics of
Taylor expansions is that an expansion around some point $x_0$ can be re-expanded around any other point $x_1$ by writing
\begin{equ}[e:TaylorExp]
(x-x_0)^m = \sum_{k+\ell = m} {m!\over k! \ell!} (x_1 - x_0)^k\cdot (x-x_1)^\ell\;.
\end{equ}
(In the case when $x \in \R^d$, $k$, $\ell$ and $m$ denote multi-indices and $k! = k_1!\ldots k_d!$.) 
Somewhat similarly, in the case of controlled rough paths, we have the (rather trivial) identity
\begin{equ}[e:ControlExp]
X_{s_0,t} = X_{s_0, s_1}\cdot 1 + 1\cdot X_{s_1,t}\;.
\end{equ}
What is a natural abstraction of this fact? In terms of the coefficients of a ``Taylor expansion'', the operation of 
reexpanding around a different point is ultimately just a linear operation from $\Gamma \colon T \to T$, where the
precise value of the map $\Gamma$ depends on the starting point $x_0$, the endpoint $x_1$, and possibly also on the 
details of the particular ``model'' that we are considering. 
In view of the above examples, it is natural to impose furthermore that $\Gamma$ has the property that
if $\tau \in T_\alpha$, then $\Gamma \tau - \tau \in \bigoplus_{\beta < \alpha} T_\beta$. In other words, 
when reexpanding a homogeneous monomial around a different point, the leading order coefficient remains 
the same, but lower order monomials may appear. 

These heuristic considerations can be summarised in the following definition of an abstract object
we call a \textit{regularity structure}:

\begin{definition}
Let $A \subset \R$ be bounded from below and without accumulation point, and let 
$T = \bigoplus_{\alpha \in A} T_\alpha$ be a vector space graded by $A$ such that each $T_\alpha$ is a 
Banach space. Let furthermore $G$ be a group of continuous operators on $T$ such that, for every $\alpha \in A$,
every $\Gamma \in G$, and every $\tau \in T_\alpha$, one has $\Gamma \tau - \tau \in \bigoplus_{\beta < \alpha} T_\beta$.
The triple $\TT=(A,T,G)$ is called a \textit{regularity structure} with \textit{model space} $T$ and \textit{structure group} $G$.
\end{definition}

\begin{remark}
Given $\tau \in T$, we will write $\|\tau\|_\alpha$ for the norm of its component in $T_\alpha$.
\end{remark}

\begin{remark}
In \cite{Regular} it is furthermore assumed that $0 \in A$, $T_0 \approx \R$, and $T_0$ is invariant under $G$.
This is a very natural assumption which ensures that our regularity structure is at least sufficiently
rich to represent constant functions. 
\end{remark}

\begin{remark}
In principle, the set $A$ can be infinite. By analogy with the polynomials, it is then natural to 
consider $T$ as the set of all formal series of the form $\sum_{\alpha \in A}\tau_\alpha$, where only
finitely many of the $\tau_\alpha$'s are non-zero. This also dovetails nicely with the
particular form of elements in $G$. In practice however we will only ever work with finite subsets
of $A$ so that the precise topology on $T$ does not matter.
\end{remark}

At this stage, a regularity structure is a completely abstract object. It only becomes useful when endowed with
a \textit{model}, which is a concrete way of associating to any $\tau \in T$ and $x_0 \in \R^d$, the actual
``Taylor polynomial based at $x_0$'' represented by $\tau$. Furthermore, we want elements $\tau \in T_\alpha$ to
represent functions (or possibly distributions!) that ``vanish at order $\alpha$'' around the given point $x_0$.

Since we would like to allow $A$ to contain negative values and therefore allow elements in $T$ to represent actual
distributions, we need a suitable notion of ``vanishing at order $\alpha$''. We achieve this by considering the
size of our distributions, when tested against test functions that are localised around the given point $x_0$.
Given a test function $\phi$ on $\R^d$, we write $\phi_x^\lambda$ as a shorthand for
\begin{equ}
\phi_x^\lambda(y) = \lambda^{-d} \phi\bigl(\lambda^{-1}(y-x)\bigr)\;.
\end{equ}
Given $r >0$, we also denote by $\CB_r$ the set of all functions $\phi \colon \R^d \to \R$ such that 
$\phi \in \CC^r$ with $\|\phi\|_{\CC^r} \le 1$ that are furthermore supported in the unit ball around the origin.
With this notation, our definition of a model for a given regularity structure $\TT$ is as follows.

\begin{definition}\label{def:model}
Given a regularity structure $\TT$ and an integer $d \ge 1$, a \textit{model} for $\TT$ on $\R^d$ consists
of maps
\begin{equs}[2]
\Pi \colon \R^d &\to \CL\bigl(T, \CS'(\R^d)\bigr)&\qquad \Gamma\colon \R^d \times \R^d &\to G\\
x &\mapsto \Pi_x&\quad (x,y) &\mapsto \Gamma_{xy}
\end{equs}
such that $\Gamma_{xy}\Gamma_{yz} = \Gamma_{xz}$ and $\Pi_x \Gamma_{xy} = \Pi_y$. Furthermore, given $r > |\inf A|$, 
for any compact set $\K \subset \R^d$ and constant $\gamma > 0$, there exists a constant $C$ such that the bounds
\begin{equ}[e:bounds]
\bigl|\bigl(\Pi_x \tau\bigr)(\phi_x^\lambda)\bigr| \le C \lambda^{|\tau|} \|\tau\|_\alpha\;,\qquad
\|\Gamma_{xy}\tau\|_\beta \le C |x-y|^{\alpha-\beta} \|\tau\|_\alpha\;,
\end{equ}
hold uniformly over $\phi \in \CB_r$, $(x,y) \in \K$, $\lambda \in (0,1]$, 
$\tau \in T_\alpha$ with $\alpha \le \gamma$, and $\beta < \alpha$.
\end{definition}

\begin{remark}
In principle, test functions appearing in \eref{e:bounds} should be smooth. It turns out that if these bounds
hold for smooth elements of $\CB_r$, then $\Pi_x \tau$ can be extended canonically to allow any $\CC^r$
test function with compact support.
\end{remark}

\begin{remark}
The identity $\Pi_x \Gamma_{xy} = \Pi_y$ reflects the fact that $\Gamma_{xy}$ is the linear map that takes
an expansion around $y$ and turns it into an expansion around $x$. The first bound in \eref{e:bounds} states
what we mean precisely when we say that $\tau \in T_\alpha$ represents a term that vanishes at order $\alpha$.
(Note that $\alpha$ can be negative, so that this may actually not vanish at all!) 
The second bound in \eref{e:bounds} is very natural in view of both \eref{e:TaylorExp} and \eref{e:ControlExp}.
It states that when expanding a monomial of order $\alpha$ around a new point at distance $h$ from the old one,
the coefficient appearing in front of lower-order monomials of order $\beta$ is of order at most $h^{\alpha - \beta}$.
\end{remark}

\begin{remark}
In many cases of interest, it is natural to scale the different directions of $\R^d$ in a different way. 
This is the case for example when using the theory of regularity structures to build solution theories for
parabolic stochastic PDEs, in which case the time direction ``counts as'' two space directions.  
To deal with such  a situation, one can introduce a scaling $\s$ of $\R^d$, which is just a collection of
$d$ mutually prime strictly positive integers and to define $\phi_x^\lambda$ in such a  way that the $i$th
direction is scaled by $\lambda^{\s_i}$. In this case,  the Euclidean distance between two points should
be replaced everywhere by the corresponding scaled distance $|x|_\s = \sum_i |x_i|^{1/\s_i}$.
See also \cite{Regular} for more details.
\end{remark}

With these definitions at hand, it is then natural to define an equivalent in this context of the space
of $\gamma$-H\"older continuous functions in the following way.

\begin{definition}
Given a regularity structure $\TT$ equipped with a model $(\Pi,\Gamma)$ over $\R^d$, 
the space $\D^\gamma = \D^\gamma(\TT,\Gamma)$ is given by the set of functions $f\colon \R^d \to \bigoplus_{\alpha < \gamma} T_\alpha$
such that, for every compact set $\K$ and every $\alpha < \gamma$, the exists a constant $C$ with
\begin{equ}[e:defHolder]
\|f(x) - \Gamma_{xy} f(y)\|_{\alpha} \le C |x-y|^{\gamma-\alpha}
\end{equ}
uniformly over $x,y \in \K$.
\end{definition}

The most fundamental result in the theory of regularity structures then states that 
given $f \in \D^\gamma$ with $\gamma > 0$, there exists a \textit{unique} Schwartz distribution
$\CR f$ on $\R^d$ such that, for every $x \in \R^d$, $\CR f$ ``looks like $\Pi_x f(x)$ near $x$''.
More precisely, one has

\begin{theorem}\label{theo:reconstruction}
Let $\TT$ be a regularity structure as above and let $(\Pi,\Gamma)$ a model for $\TT$ on $\R^d$.
Then, there exists a unique linear map $\CR\colon \D^\gamma \to \CS'(\R^d)$ such that
\begin{equ}[e:boundRf]
\bigl|\bigl(\CR f - \Pi_x f(x)\bigr)(\phi_x^\lambda)\bigr| \lesssim \lambda^\gamma\;,
\end{equ}
uniformly over $\phi \in \CB_r$ and $\lambda$ as before, and locally uniformly in $x$.
\end{theorem}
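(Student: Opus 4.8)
\emph{Proof sketch.}
The plan is to dispose of uniqueness first (it is soft), and then to \emph{construct} $\CR f$ by a multiresolution approximation and verify the bound \eref{e:boundRf}.

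\emph{Uniqueness.} If $\CR^1 f$ and $\CR^2 f$ both satisfy \eref{e:boundRf}, put $\zeta = \CR^1 f - \CR^2 f$, so that $|\zeta(\phi^\lambda_x)| \lesssim \lambda^\gamma$ uniformly over $\phi \in \CB_r$, $\lambda \in (0,1]$, and locally uniformly in $x$. Given a test function $\psi$ supported in a fixed ball, I would, for each small $\lambda$, use a partition of unity to write $\psi = \sum_k\psi_k$ as a sum of $\CO(\lambda^{-d})$ bumps, each supported in a ball of radius $\lambda$ about some $x_k$ and of the form $\psi_k = c_k(\tilde\psi_k)_{x_k}^\lambda$ with $\tilde\psi_k \in \CB_r$ and $|c_k| \lesssim \lambda^d\|\psi\|_{\CC^r}$. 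Then $|\zeta(\psi)| \le \sum_k|c_k|\,|\zeta((\tilde\psi_k)_{x_k}^\lambda)| \lesssim \lambda^{-d}\cdot\lambda^d\cdot\lambda^\gamma \to 0$ as $\lambda \to 0$ --- and it is here that $\gamma > 0$ is essential --- so $\zeta = 0$.

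\emph{Construction.} Fix a compactly supported scaling function $\varphi \in \CC^r$ coming from a multiresolution analysis on $\R^d$ whose associated wavelets have at least $r$ vanishing moments (e.g.\ a Daubechies wavelet); it obeys a two-scale refinement identity $\varphi = \sum_k a_k\,\varphi(2\cdot - k)$ with only finitely many $a_k \ne 0$. Writing $\Lambda_n = 2^{-n}\ZZ^d$ and $\varphi^n_x(y) = 2^{nd/2}\varphi(2^n(y-x))$, I would set
\begin{equ}
\CR_n f \;=\; \sum_{x \in \Lambda_n}\bigl(\Pi_x f(x)\bigr)(\varphi^n_x)\,\varphi^n_x\;.
\end{equ}
The engine of every estimate is the identity $\Pi_y f(y) = \Pi_x\Gamma_{xy}f(y)$: combined with the $\D^\gamma$-bound $\|\Gamma_{xy}f(y) - f(x)\|_\alpha \lesssim |x-y|^{\gamma-\alpha}$ from \eref{e:defHolder} and the analytic model bound $|(\Pi_x\tau)(\phi^\lambda_x)| \lesssim \lambda^{|\tau|}\|\tau\|$ from \eref{e:bounds}, it lets one compare $(\Pi_x f(x))(\varphi^n_y)$ with $(\Pi_y f(y))(\varphi^n_y)$ for $|x-y| \lesssim 2^{-n}$ at the cost of a factor $2^{-n\gamma}$, the least favourable terms coming from the finitely many negative homogeneities in $A$. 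A subtlety I expect to be the crux: when estimating such a difference tested against a function localised near a point $z$, one must re-expand around $z$ itself rather than around some fixed faraway point, as otherwise the mismatch of base points costs an uncontrolled power.

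\emph{Convergence and the bound.} Using the refinement identity to re-expand $\varphi^n_x$ over $\{\varphi^{n+1}_z\}_{z\in\Lambda_{n+1}}$ together with the comparison above, I would estimate the wavelet coefficients of $\CR_{n+1}f - \CR_n f$ and show that this increment is summably small in the Hölder--Besov space of regularity just below $\inf A$; hence $\CR_n f$ converges in $\CS'(\R^d)$, and I set $\CR f := \lim_n\CR_n f$ (one verifies en route that $\CR_n f$ is precisely the $n$-th multiresolution projection of $\CR f$). For \eref{e:boundRf}, given $x$ and $\lambda$, pick $n$ with $2^{-n}\sim\lambda$ and split $\phi^\lambda_x$ into its projection onto the level-$n$ space plus its wavelet components at scales $m\ge n$. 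The projection part contributes $\CO(\lambda^\gamma)$ directly from the base-point comparison at scale $n$; for each wavelet component one shows, again by that comparison but now re-based at the centre of the wavelet, that the coefficient of $\CR f - \Pi_x f(x)$ against a scale-$2^{-m}$ wavelet near $x$ is $\lesssim 2^{-md/2}\,2^{-n\gamma}\,2^{(m-n)|\inf A|}$, while testing a scale-$\lambda$ function of class $\CC^r$ against that wavelet gains $2^{-(m-n)r}$ from the $r$ vanishing moments. Summed over $m\ge n$ these combine into a convergent geometric series precisely because $r > |\inf A|$, and the total is again $\CO(\lambda^\gamma)$. The main obstacle is exactly this last step: tracking the finitely many homogeneities, choosing base points so that the powers of $2^{m-n}$ stay under control, and coming to terms with the fact that the individual wavelet coefficients of $\CR_n f$ themselves \emph{grow} with $n$ --- reflecting that $\CR f$ is a genuine distribution --- so that both convergence and the bound only emerge after summation against the test function.
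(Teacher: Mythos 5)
Your proposal follows essentially the same route as the paper: the same multiresolution approximation $\CR_n f = \sum_{x\in\Lambda_n}(\Pi_x f(x))(\varphi^n_x)\,\varphi^n_x$, the same telescoping estimate on $\CR_{n+1}f - \CR_n f$ split into a level-$n$ projection plus a detail part controlled by vanishing moments, and the same choice $2^{-n}\sim\lambda$ with a geometric series in $m\ge n$ made convergent precisely by $r>|\inf A|$, with all comparisons anchored on $\Pi_y f(y)=\Pi_x\Gamma_{xy}f(y)$ and \eref{e:defHolder}. The only genuine addition is your explicit partition-of-unity argument for uniqueness (the paper asserts it without proof), and that argument is correct since $\gamma>0$.
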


\begin{proof}
The proof of the theorem relies on the following fact. Given any $r > 0$ (but finite!), there 
exists a function $\phi \colon \R^d \to \R$ with the following properties:
\begin{claim}
\item[(1)] The function $\phi$ is of class $\CC^r$ and has compact support.
\item[(2)] For every polynomial $P$ of degree $r$, there exists a polynomial $\hat P$ of degree $r$ 
such that, for every $x \in \R^d$, one has $\sum_{y \in \Z^d} \hat P(y) \phi(x-y) = P(x)$.
\item[(3)] One has $\int \phi(x)\phi(x-y)\,dx = \delta_{y,0}$ for every $y \in \Z^d$.
\item[(4)] There exist coefficients $\{a_k\}_{k \in \Z^d}$
such that $2^{-d/2}\phi(x/2) = \sum_{k \in \Z^d} a_k \phi(x-k)$.
\end{claim}
The existence of such a function $\phi$ is highly non-trivial. This is actually equivalent to the existence
of a  wavelet basis consisting of $\CC^r$ functions with compact support, a proof of which
was first obtained by Daubechies in her seminal article \cite{MR951745}. From now on, we take the existence of
such a function $\phi$ as a given for some $r > |\inf A|$. 
We also set $\Lambda^n = 2^{-n} \Z^d$ and, for $y \in \Lambda^n$, we set
$\phi^n_y(x) = 2^{nd/2}\phi(2^n(x-y))$. Here, the normalisation is chosen in such a way that the set $\{\phi^n_y\}_{y\in \Lambda^n}$ is again orthonormal in $L^2$. We then denote by $V_n \subset \CC^r$ the linear span of $\{\phi^n_y\}_{y\in \Lambda^n}$,
so that, by the property (4) above, one has $V_0 \subset V_1 \subset V_2 \subset \ldots$. We furthermore denote by
$\hat V_n$ the $L^2$-orthogonal complement of $V_{n-1}$ in $V_{n}$, so that
$V_n = V_0 \oplus \hat V_1 \oplus\ldots\oplus \hat V_n$. In order to keep notations compact, it will also be convenient
to define the coefficients $a^n_k$ with $k \in \Lambda^n$ by $a^n_k = a_{2^n k}$.

With these notations at hand, we then define a sequence of linear operators $\CR^n \colon \D^\gamma \to \CC^r$ by
\begin{equ}
\bigl(\CR^n f\bigr)(y) = \sum_{x \in \Lambda^n} \bigl(\Pi_x f(x)\bigr)(\phi_x^n)\,\phi_x^n(y)\;.
\end{equ}
We claim that there then exists a Schwartz distribution $\CR f$ such that, 
for every compactly supported test function $\psi$ of class $\CC^r$, one has $\scal{\CR^n f, \psi} \to \bigl(\CR f\bigr)(\psi)$,
and that $\CR f$ furthermore satisfies the properties stated in the theorem.

Let us first consider the size of the components of $\CR^{n+1} f - \CR^n f$ in $V_n$. Given $x \in \Lambda^n$, we make
use of properties (3-4), so that
\begin{equs}
\scal{\CR^{n+1} f - \CR^n f, \phi_x^n} &= \sum_{k \in \Lambda^{n+1}} a^n_k \scal{\CR^{n+1} f, \phi_{x+k}^{n+1}} - \bigl(\Pi_x f(x)\bigr)(\phi_x^n) \\
&= \sum_{k \in \Lambda^{n+1}} a^n_k \bigl(\Pi_{x+k} f(x+k)\bigr)(\phi_{x+k}^{n+1})  - \bigl(\Pi_x f(x)\bigr)(\phi_x^n) \\
&= \sum_{k \in \Lambda^{n+1}} a^n_k \bigl(\bigl(\Pi_{x+k} f(x+k)\bigr)(\phi_{x+k}^{n+1})  - \bigl(\Pi_x f(x)\bigr)(\phi_{x+k}^{n+1})\bigr)\\
&= \sum_{k \in \Lambda^{n+1}} a^n_k \bigl(\Pi_{x+k}\bigl(f(x+k) - \Gamma_{x+k,x} f(x)\bigr)\bigr)(\phi_{x+k}^{n+1})\;,
\end{equs}
where we used the algebraic relations between $\Pi_x$ and $\Gamma_{xy}$ to obtain the last identity.
Since only finitely many of the coefficients $a_k$ are non-zero, it follows from the definition of $\D^\gamma$ that
for the non-vanishing terms in this sum we have the bound
\begin{equ}
\|f(x+k) - \Gamma_{x+k,x} f(x)\|_\alpha \lesssim 2^{-n(\gamma-\alpha)}\;,
\end{equ}
uniformly over $n \ge 0$ and $x$ in any compact set. Furthermore, for any $\tau \in T_\alpha$, it follows from
the definition of a model that one has the bound
\begin{equ}
\bigl|\bigl(\Pi_x \tau\bigr)(\phi_x^n)\bigr| \lesssim 2^{-\alpha n - {nd \over 2}}\;,
\end{equ}
again uniformly over $n \ge 0$ and $x$ in any compact set. Here, the additional factor $2^{- {nd \over 2}}$ comes
from the fact that the functions $\phi_x^n$ are normalised in $L^2$ rather than $L^1$. Combining these two bounds, we
immediately obtain that
\begin{equ}[e:bounddiff]
\bigl|\scal{\CR^{n+1} f - \CR^n f, \phi_x^n}\bigr| \lesssim 2^{-\gamma n - {nd \over 2}}\;,
\end{equ}
uniformly over $n \ge 0$ and $x$ in compact sets. Take now a test function $\psi \in \CC^r$ with compact support
and let us try to estimate $\scal{\CR^{n+1} f - \CR^n f, \psi}$. Since $\CR^{n+1} f - \CR^n f \in V_{n+1}$,
we can decompose it into a part $\delta \CR^n f \in V_n$ and a part $\hat \delta \CR^n f\in \hat V_{n+1}$ and estimate both parts separately. Regarding the part in $V_n$, we have
\begin{equ}[e:bounddeltaR]
\bigl|\scal{\delta \CR^n f , \psi}\bigr| = \Bigl|\sum_{x \in \Lambda^{n+1}} \scal{\delta \CR^n f , \phi_x^n}\scal{\phi_x^n,\psi}\Bigr| \lesssim 2^{-\gamma n - {nd \over 2}} \sum_{x \in \Lambda^{n+1}} \bigl|\scal{\phi_x^n,\psi}\bigr|\;,
\end{equ}
where we made use of the bound \eref{e:bounddiff}. At this stage we use the fact that, due to the boundedness of $\psi$,
we have $\bigl|\scal{\phi_x^n,\psi}\bigr| \lesssim 2^{-nd/2}$. Furthermore, thanks to the boundedness of the support of $\psi$,
the number of non-vanishing terms appearing in this sum is bounded by $2^{nd}$, so that we eventually obtain the bound
\begin{equ}[e:firstBound]
\bigl|\scal{\delta \CR^n f , \psi}\bigr| \lesssim 2^{-\gamma n}\;.
\end{equ}
Regarding the second term, we use the standard fact coming from wavelet analysis \cite{MR1228209} that a basis of 
$\hat V_{n+1}$ can be obtained in the same way as the basis of $V_n$,
but replacing the function $\phi$ by functions $\hat \phi$ from some finite set $\Phi$. In other words, 
 $\hat V_{n+1}$ is the linear span of $\{\hat \phi_x^n\}_{x\in \Lambda^n; \hat \phi\in \Phi}$. Furthermore, 
 as a consequence of property (2), the functions
 $\hat \phi \in \Phi$ all have the property that 
 \begin{equ}[e:killPoly]
 \int \hat \phi(x)\,P(x)\,dx = 0\;, 
 \end{equ}
for any polynomial $P$ of degree
 less or equal to $r$. In particular, this shows that one has the bound
\begin{equ}
|\scal{\hat \phi_x^n, \psi}| \lesssim 2^{- {nd \over 2} - nr}\;.
\end{equ}
As a consequence,  one has
\begin{equ}
\bigl|\scal{\hat \delta \CR^n f , \psi}\bigr|  = \Bigl|\sum_{x\in \Lambda^n \atop \hat \phi \in \Phi}\scal{\CR^{n+1} f , \hat\phi_x^n}\scal{\hat \phi_x^n, \psi}\Bigr| \lesssim 2^{- {nd \over 2} - nr} \Bigl|\sum_{x\in \Lambda^n \atop \hat \phi \in \Phi}\scal{\CR^{n+1} f , \hat\phi_x^n}\Bigr|\;.
\end{equ}
At this stage, we note that, thanks to the definition of $\CR^{n+1}$ and the bounds on the model $(\Pi,\Gamma)$,
we have $|\scal{\CR^{n+1} f , \hat\phi_x^n}| \lesssim 2^{-{nd\over 2} - \alpha_0 n}$, where $\alpha_0 = \inf A$,
so that $\bigl|\scal{\hat\delta \CR^n f , \psi}\bigr| \lesssim 2^{-nr - \alpha_0 n}$. Combining this with
\eref{e:firstBound}, we see that one has indeed $\CR^n f \to \CR f$ for some Schwartz distribution $\CR f$.

It remains to show that the bound \eref{e:boundRf} holds. For this, given a distribution $\eta \in \CC^{\alpha}$ for 
some $\alpha > -r$, we first introduce the notation
\begin{equ}
\CP_n \eta = \sum_{x \in \Lambda^n} \eta(\phi_x^n)\,\phi_x^n\;,\qquad 
\hat \CP_n \eta = \sum_{\hat \phi \in \Phi} \sum_{x \in \Lambda^n} \eta(\hat \phi_x^n)\,\hat \phi_x^n\;.
\end{equ}
We also choose an integer value $n \ge 0$ such that $2^{-n} \sim \lambda$
and we write 
\begin{equs}
\CR f - \Pi_x f(x) &= \CR^n f - \CP_n \Pi_x f(x) + \sum_{m \ge n} \bigl(\CR^{m+1}f - \CR^m f - \hat \CP_m \Pi_x f(x)\bigr)\\
&= \CR^n f - \CP_n \Pi_x f(x) + \sum_{m \ge n} \bigl(\hat \delta\CR^{m}f - \hat \CP_m \Pi_x f(x)\bigr)+ \sum_{m \ge n} \delta\CR^{m}f\;.\qquad\quad\label{e:decomposition}
\end{equs}
We then test these terms against $\psi_x^\lambda$ and we estimate the resulting terms separately.
For the first term, we have the identity
\begin{equ}[e:firstTerm]
\bigl(\CR^n f - \CP_n \Pi_x f(x)\bigr)(\psi_x^\lambda) = \sum_{y \in \Lambda^n} \bigl(\Pi_y f(y) - \Pi_x f(x)\bigr)(\phi_y^n)\,\scal{\phi_y^n, \psi_x^\lambda}\;.
\end{equ}
We have the bound
$|\scal{\phi_y^n, \psi_x^\lambda}|\lesssim \lambda^{-d} 2^{-dn/2} \sim 2^{dn/2}$. Since one furthermore has $|y-x| \lesssim \lambda$ for
all non-vanishing terms in the sum, one also has similarly to before 
\begin{equ}[e:diffPixy]
|\bigl(\Pi_y f(y) - \Pi_x f(x)\bigr)(\phi_y^n)| 
\lesssim \sum_{\alpha < \gamma} \lambda^{\gamma - \alpha} 2^{-{dn \over 2}- \alpha n} \sim 2^{-{dn \over 2}- \gamma n}\;.
\end{equ}
Since only finitely many (independently of $n$) terms contribute to the sum in \eref{e:firstTerm}, it is indeed bounded
by a constant proportional to $2^{-\gamma n} \sim \lambda^\gamma$ as required. 

We now turn to the second term in \eref{e:decomposition}, where we consider some fixed value $m \ge n$.
We rewrite this term very similarly to before as
\begin{equ}
\bigl(\hat \delta\CR^m f - \hat \CP_m \Pi_x f(x)\bigr)(\psi_x^\lambda) = \sum_{\hat \phi \in \Phi}\sum_{y,z} \bigl(\Pi_y f(y) - \Pi_x f(x)\bigr)(\phi_y^{m+1})\,\scal{\phi_y^{m+1}, \hat \phi_z^m}\,\scal{\hat \phi_z^m, \psi_x^\lambda}\;,
\end{equ} 
where the sum runs over $y \in \Lambda^{m+1}$ and $z \in \Lambda^m$.
This time, we use the fact that
by the property \eqref{e:killPoly} of the wavelets $\hat \phi$, one has the bound
\begin{equ}[e:boundWavelet]
|\scal{\hat \phi_z^m, \psi_x^\lambda}| \lesssim \lambda^{-d-r} 2^{-rm- {md \over 2}}\;,
\end{equ}
and the $L^2$-scaling implies that $|\scal{\phi_y^{m+1}, \hat \phi_z^m}| \lesssim 1$.
Furthermore, for each $z\in \Lambda^m$, only finitely many elements $y \in \Lambda^{m+1}$ contribute
to the sum, and these elements all satisfy $|y-z| \lesssim 2^{-m}$.
Bounding the first factor as in \eref{e:diffPixy} and using the fact that there are of the order of
$\lambda^d 2^{md}$ terms contributing for every fixed $m$, we thus see that the contribution of the second term
in \eqref{e:decomposition} is bounded by
\begin{equ}
\sum_{m \ge n} \lambda^d 2^{md} \sum_{\alpha < \gamma} \lambda^{\gamma - \alpha - d-r} 2^{-dm- \alpha m - rm}
\sim \sum_{\alpha < \gamma} \lambda^{\gamma-\alpha-r} \sum_{m \ge n} 2^{-\alpha m -rm}
\sim \lambda^{\gamma}\;.
\end{equ}

For the last term in \eref{e:decomposition}, we combine \eref{e:bounddeltaR} 
with the bound $|\scal{\phi_y^m, \psi_x^\lambda}|\lesssim \lambda^{-d} 2^{-dm/2}$
and the fact that there are of the order of $\lambda^d 2^{-md}$ terms appearing
in the sum \eref{e:bounddeltaR} to conclude that the $m$th summand is bounded by 
a constant proportional to $2^{-\gamma m}$. Summing over $m$ yields again the desired bound
and concludes the proof.
\end{proof}

\begin{remark}
Note that the space $\D^\gamma$ depends crucially on the choice of model $(\Pi,\Gamma)$.
As a consequence, the reconstruction operator $\CR$ itself also depends on that choice.
However, the map $(\Pi, \Gamma, f) \mapsto \CR f$ turns out to be locally Lipschitz 
continuous provided that the distance between $(\Pi, \Gamma, f)$ and $(\bar \Pi, \bar \Gamma, \bar f)$ is given by
the smallest constant $\rho$ such that
\begin{equs}
\|f(x) - \bar f(x) - \Gamma_{xy} f(y) + \bar \Gamma_{xy} \bar f(y)\|_{\alpha} &\le \rho |x-y|^{\gamma-\alpha}\;,\\
\bigl|\bigl(\Pi_x \tau - \bar \Pi_x \tau\bigr)(\phi_x^\lambda)\bigr| &\le \rho \lambda^{\alpha} \|\tau\|\;,\\
\|\Gamma_{xy}\tau - \bar \Gamma_{xy}\tau\|_\beta &\le \rho |x-y|^{\alpha-\beta} \|\tau\|\;.
\end{equs}
Here, in order to obtain bounds on $\bigl(\CR f - \bar \CR \bar f\bigr)(\psi)$ for some smooth compactly supported 
test function $\psi$, the above bounds should hold uniformly for $x$ and $y$ in a neighbourhood of the support of $\psi$.
The proof that this stronger continuity property also holds is actually crucial when showing that sequences of
solutions to mollified equations all converge to the same limiting object. However, its proof is somewhat more
involved which is why we chose not to give it here.
\end{remark}

\begin{remark}\label{rem:continuousModel}
In the particular case where $\Pi_x \tau$ happens to be a continuous function for every $\tau \in T$
(and every $x \in \R^d$), $\CR f$ is also a continuous function and one has the identity
\begin{equ}[e:formulaRf]
\bigl(\CR f\bigr)(x) = \bigl(\Pi_x f(x)\bigr)(x)\;.
\end{equ}
This can be seen from the fact that
\begin{equ}
\bigl(\CR f\bigr)(y) = \lim_{n \to \infty} \bigl(\CR^n f\bigr)(y) = \lim_{n \to \infty} \sum_{x \in \Lambda^n} \bigl(\Pi_x f(x)\bigr)(\phi_x^n)\,\phi_x^n(y)\;.
\end{equ}
Indeed, our assumptions imply that the function $(x,z) \mapsto \bigl(\Pi_x f(x)\bigr)(z)$ is 
jointly continuous and since the non-vanishing terms in the above sum satisfy $|x-y| \lesssim 2^{-n}$,
one has $2^{dn/2}\bigl(\Pi_x f(x)\bigr)(\phi_x^n) \approx \bigl(\Pi_y f(y)\bigr)(y)$ for large $n$.
Since furthermore $\sum_{x \in \Lambda^n} \phi_x^n(y) = 2^{dn/2}$, the claim follows.
\end{remark}

\section{Examples of regularity structures}

\subsection{The polynomial structure}
\label{sec:polynomial}

It should by now be clear how the structure given by the usual Taylor polynomials fits into this framework.
A natural way of setting it up is to take for $T$ the space of all abstract polynomials in $d$ commuting variables,
denoted by $X_1,\ldots, X_d$, and to postulate that $T_k$ consists of the linear span of monomials of degree $k$.
As an abstract group, the structure group $G$ is then given by $\R^d$ endowed with addition as its group operation, 
which acts onto $T$ via $\Gamma_h X^k = (X-h)^k$, where $h \in \R^d$ and we use the notation $X^k$ as a shorthand
for $X_1^{k_1}\cdots X_d^{k_d}$ for any multiindex $k$.

The canonical polynomial model is then given by
\begin{equ}
\bigl(\Pi_x X^k\bigr)(y) = (y-x)^k\;,\qquad \Gamma_{xy} = \Gamma_{y-x}\;.
\end{equ}
We leave it as an exercise to the reader to verify that this does indeed satisfy the bounds and relations
of Definition~\ref{def:model}.

In the particular case of the canonical polynomial model and for $\gamma \not \in \N$, the spaces $\D^\gamma$ then 
coincide precisely with the
usual H\"older spaces $\CC^\gamma$. In the case of integer values, this should be interpreted as bounded
functions for $\gamma = 0$, Lipschitz
continuous functions for $\gamma = 1$, etc.

\subsection{Controlled rough paths}

Let us see now how the theory of controlled rough paths can be reinterpreted in the light of this theory.
For given $\alpha \in ({1\over 3},{1\over 2})$ and $n \ge 1$, we can define a regularity structure $\TT$ by setting 
$A = \{\alpha-1, 2\alpha-1, 0, \alpha\}$. We furthermore take for $T_0$ a copy of $\R$ with unit vector $\one$,
for $T_\alpha$ and $T_{\alpha-1}$ a copy of $\R^n$ with respective unit vectors $\W_j$ and $\Xi_j$,
and for $T_{2\alpha-1}$ a copy of $\R^{n \times n}$ with unit vectors $\W_j\Xi_i$.
The structure group $G$ is taken to be isomorphic to $\R^n$ and, for $x \in \R^n$, it acts on $T$ via
\begin{equ}
\Gamma_x \one = \one\;,\qquad \Gamma_x \Xi_i = \Xi_i\;,\qquad \Gamma_x \W_i = \W_i - x^i \one\;,\qquad \Gamma_x (\W_j\Xi_i) = \W_j\Xi_i - x^j \Xi_i\;.
\end{equ}
Let now $\bX = (X,\XX)$ be an $\alpha$-H\"older continuous rough path with values in $\R^n$.
In other words, the functions $X$ and $\XX$ are as in the introduction, satisfying the relation
\eref{e:constr} and the analytic bounds $|X_t - X_s| \lesssim |t-s|^\alpha$, $|\XX_{s,t}| \lesssim |t-s|^{2\alpha}$.  
It turns out that this defines a model for $\TT$ in the following way (recall that 
$X_{s,t}$ is a shorthand for $X_t - X_s$):

\begin{lemma}
Given an $\alpha$-H\"older continuous rough path $\bX$, one can define a model for $\TT$ on $\R$
by setting $\Gamma_{su} = \Gamma_{X_{s,u}}$ and
\begin{equs}[2]
\bigl(\Pi_s \one\bigr)(t) &= 1\;,&\qquad \bigl(\Pi_s \W_j\bigr)(t) &= X^j_{s,t} \\
\bigl(\Pi_s \Xi_j\bigr)(\psi) &= \int \psi(t)\,dX^j_t\;,&\qquad 
\bigl(\Pi_s \W_j\Xi_i\bigr)(\psi) &= \int \psi(t)\,d\XX^{i,j}_{s,t}\;.
\end{equs}
Here, both integrals are perfectly well-defined Riemann integrals, with the differential in the second case 
taken with respect to the variable $t$. Given a controlled rough path $(Y,Y') \in \CC^\alpha_X$ as in \eref{e:expY}, this then defines an
element $\hat Y \in \D^{2\alpha}$ by setting
\begin{equ}
\hat Y(s) = Y(s)\,\one + Y'_i(s)\,\W_i\;,
\end{equ}
with summation over $i$ implied.
\end{lemma}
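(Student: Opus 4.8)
The plan is to verify in turn the three pieces of the claimed statement: (i) that $(\Pi,\Gamma)$ as defined is a model in the sense of Definition~\ref{def:model}, (ii) that the algebraic identities $\Gamma_{su}\Gamma_{ut}=\Gamma_{st}$ and $\Pi_s\Gamma_{su}=\Pi_u$ hold, and (iii) that $\hat Y$ belongs to $\D^{2\alpha}$. The algebraic part (ii) is essentially bookkeeping: since $\Gamma_x$ acts on the four basis vectors by the explicit formulas given, the composition law $\Gamma_{su}\Gamma_{ut}=\Gamma_{st}$ reduces to the additivity $X_{s,u}+X_{u,t}=X_{s,t}$ on $\one$ and on $\W_i$, while on $\W_j\Xi_i$ it reduces to the same additivity plus Chen's relation~\eref{e:constr} (which is exactly what makes the $\XX$-term transform correctly). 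For $\Pi_s\Gamma_{su}=\Pi_u$ one checks the four generators: on $\one$ it is trivial; on $\W_j$ it is $X^j_{s,t}-X^j_{s,u}=X^j_{u,t}$; on $\Xi_j$ trivial; and on $\W_j\Xi_i$ it uses $\d_t\XX^{i,j}_{s,t}-\d_t\XX^{i,j}_{u,t} = \d_t\XX^{i,j}_{u,t}$ up to a term killed against the $\Xi_i$ contribution, i.e.\ Chen's relation differentiated in $t$.

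For part (i), the bound on $\Gamma_{xy}$ is immediate from the analytic estimates on $\bX$: the only nontrivial entries of $\Gamma_{X_{s,u}}-\mathrm{Id}$ are the coefficient $-X^i_{s,u}$ sending $\W_i$ (homogeneity $\alpha$) to $\one$ (homogeneity $0$), bounded by $|s-u|^\alpha=|s-u|^{\alpha-0}$, and the coefficient $-X^j_{s,u}$ sending $\W_j\Xi_i$ (homogeneity $2\alpha-1$) to $\Xi_i$ (homogeneity $\alpha-1$), bounded by $|s-u|^{\alpha}=|s-u|^{(2\alpha-1)-(\alpha-1)}$ — exactly the required $|x-y|^{\alpha-\beta}$. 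For the bound on $\Pi_s\tau$ one tests against $\psi^\lambda_s$ and checks each generator. For $\one$ and $\W_j$, which are genuine (continuous) functions, one integrates $|\Pi_s\tau|(t)\lesssim 1$ resp.\ $\lesssim|t-s|^\alpha\lesssim\lambda^\alpha$ against $|\psi^\lambda_s(t)|$ over an interval of length $\sim\lambda$, picking up the $\lambda^{-1}$ normalisation, giving $\lambda^0$ and $\lambda^\alpha$. For $\Xi_j$, one integrates by parts: $\int \psi^\lambda_s(t)\,dX^j_t = -\int (\psi^\lambda_s)'(t)\,X^j_{s,t}\,dt$ (the boundary terms vanish by compact support), and since $|(\psi^\lambda_s)'|\lesssim\lambda^{-2}$ on an interval of length $\lambda$ and $|X^j_{s,t}|\lesssim\lambda^\alpha$, this is $\lesssim\lambda^{\alpha-1}=\lambda^{|\Xi_j|}$. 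The same integration-by-parts trick applied to $\XX^{i,j}_{s,t}$, using $|\XX^{i,j}_{s,t}|\lesssim\lambda^{2\alpha}$, yields $\lesssim\lambda^{2\alpha-1}=\lambda^{|\W_j\Xi_i|}$. (One should also remark that the integrals defining $\Pi_s\Xi_j$ and $\Pi_s\W_j\Xi_i$ make sense as Riemann--Stieltjes integrals precisely because the integrand is smooth — this is the point of the sentence in the statement.)

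For part (iii), I must check~\eref{e:defHolder} for $\hat Y$ at the homogeneities $\alpha'\in\{0,\alpha\}$ below $\gamma=2\alpha$. We have $\hat Y(s)-\Gamma_{st}\hat Y(t) = \big(Y(s)-Y(t)+Y'_i(t)X^i_{s,t}\big)\one + \big(Y'_i(s)-Y'_i(t)\big)\W_i$, using that $\Gamma_{st}$ fixes $\one$ and sends $\W_i\mapsto\W_i-X^i_{s,t}\one$. The $\W_i$-component has norm $|Y'(s)-Y'(t)|$, which is $\lesssim|s-t|^\alpha=|s-t|^{2\alpha-\alpha}$ because $Y'\in\CC^\alpha$ (part of the definition of a controlled path, or follows from~\eref{e:expY}); and the $\one$-component is $|Y(t)-Y(s)-Y'(t)X_{t,s}|$ — wait, signs: it is $|Y(s)-Y(t)+Y'_i(t)X^i_{s,t}|=|Y_t-Y_s-Y'_i(t)X^i_{t,s}|$ which by the defining expansion~\eref{e:expY} of $(Y,Y')\in\CC^\alpha_X$ is $\lesssim|s-t|^{2\alpha}=|s-t|^{2\alpha-0}$. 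So both bounds hold, giving $\hat Y\in\D^{2\alpha}$.

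I do not expect a genuine obstacle here; the statement is essentially a dictionary translation, and every estimate is a one-line consequence of the hypotheses on $\bX$ and on $(Y,Y')$. The one point requiring the slightest care — and hence the ``main step'' — is the verification of $\Pi_s\Gamma_{su}=\Pi_u$ on the element $\W_j\Xi_i$, since this is where Chen's relation~\eref{e:constr} is genuinely used and where it is easy to get signs or the order of indices wrong; it is worth writing that computation out in full, differentiating Chen's relation in the variable $t$ and pairing against a test function. The other mild care-point is being explicit that integration by parts is what converts the nonexistence of a naive pathwise integral into the correct scaling bounds, i.e.\ that the smoothness of the test function is doing all the work in the bounds on $\Pi_s\Xi_j$ and $\Pi_s\W_j\Xi_i$.
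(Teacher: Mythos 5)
Your proof is correct and follows essentially the same route as the paper's: check the algebraic identities (trivially on $\one,\W_j,\Xi_j$, and via Chen's relation differentiated in $t$ on $\W_j\Xi_i$), verify the analytic bounds, and read off $\hat Y\in\D^{2\alpha}$ componentwise from \eref{e:expY}. The paper simply asserts the analytic bounds on $(\Pi,\Gamma)$ ``follow immediately from the definition of the rough path space'' where you usefully spell out the integration-by-parts argument behind them; note only that your inline identity $\d_t\XX^{i,j}_{s,t}-\d_t\XX^{i,j}_{u,t}=\d_t\XX^{i,j}_{u,t}$ is garbled — differentiating Chen gives $d\XX^{i,j}_{s,t}-d\XX^{i,j}_{u,t}=X^i_{s,u}\,dX^j_t$ — though your surrounding prose (``up to a term killed against the $\Xi$-contribution'') makes clear you intend exactly this relation.
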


\begin{proof}
We first check that the algebraic properties of Definition~\ref{def:model} are satisfied. It is clear that $\Gamma_{su}\Gamma_{ut} = \Gamma_{st}$ and that $\Pi_s\Gamma_{su} \tau = \Pi_u\tau$ for $\tau \in \{\one, \W_j, \Xi_j\}$.
Regarding $\W_j\Xi_i$, we differentiate Chen's relations \eref{e:constr} which yields the identity
\begin{equ}
d\XX^{i,j}_{s,t} = d\XX^{i,j}_{u,t} + X^i_{s,u} \, dX^j_{t}\;.
\end{equ}
The last missing algebraic relation then follows at once. The required analytic bounds follow immediately from the definition
of the rough path space $\CD^\alpha$.

Regarding the function $\hat Y$ defined in the statement, we have
\begin{equs}
\|\hat Y(s) - \Gamma_{su} \hat Y(u)\|_0 &= |Y(s) - Y(u) + Y'_i(u) X^i_{s,u}|\;,\\
\|\hat Y(s) - \Gamma_{su} \hat Y(u)\|_\alpha &= |Y'(s) - Y'(u)|\;,
\end{equs}
so that the condition \eref{e:defHolder} with $\gamma = 2\alpha$ does indeed 
coincide with the definition of a controlled rough path given in the introduction.
\end{proof}

In this context, the reconstruction theorem allows us to define an integration operator
with respect to $W$. We can formulate this as follows where one should really think
of $Z$ as providing a consistent definition of what one means by $\int Y\,dX^j$.

\begin{lemma}\label{lem:RP}
In the same context as above, let
$\alpha \in ({1\over 3}, {1\over 2})$, and consider $\hat Y \in \D^{2\alpha}$ built as above
from a controlled rough path. Then, the map $\hat Y \Xi_i$ given by
\begin{equ}
\bigl(\hat Y \Xi_j\bigr)(s) =  Y(s)\,\Xi_j + Y'_i(s)\,\W_i\Xi_j
\end{equ}
belongs to $\D^{3\alpha-1}$. Furthermore, there exists a function $Z$ such that, for every
smooth test function $\psi$, one has
\begin{equ}
\bigl(\CR \hat Y \Xi_j\bigr)(\psi) = \int \psi(t)\,dZ(t)\;,
\end{equ}
and such that $Z_{s,t} = Y(s)\,X^j_{s,t} + Y'_i(s)\, \XX^{i,j}_{s,t} + \CO(|t-s|^{3\alpha})$.
\end{lemma}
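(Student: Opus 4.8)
The plan is to apply the reconstruction theorem (Theorem~\ref{theo:reconstruction}) to the element $\hat Y \Xi_j \in \D^{3\alpha-1}$ and then extract from $\CR(\hat Y \Xi_j)$ the desired primitive $Z$. First I would verify that $\hat Y \Xi_j$ really does belong to $\D^{3\alpha-1}$. Using the action of $\Gamma_{su}$ on $\Xi_j$ and $\W_i\Xi_j$ (namely $\Gamma_{su}\Xi_j = \Xi_j$ and $\Gamma_{su}(\W_i\Xi_j) = \W_i\Xi_j - X^i_{s,u}\Xi_j$), one computes
\begin{equs}
\|\hat Y\Xi_j(s) - \Gamma_{su}\hat Y\Xi_j(u)\|_{\alpha-1} &= |Y(s) - Y(u) + Y'_i(u) X^i_{s,u}| \lesssim |s-u|^{2\alpha}\;,\\
\|\hat Y\Xi_j(s) - \Gamma_{su}\hat Y\Xi_j(u)\|_{2\alpha-1} &= |Y'(s) - Y'(u)| \lesssim |s-u|^{\alpha}\;,
\end{equs}
where the first bound uses the defining property \eref{e:expY} of the controlled rough path and the second its $\alpha$-Hölder regularity. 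Since $\gamma = 3\alpha - 1$, the two required exponents are $\gamma - (\alpha-1) = 2\alpha$ and $\gamma - (2\alpha-1) = \alpha$, which matches; note that $\gamma = 3\alpha-1 > 0$ precisely because $\alpha > 1/3$, so the reconstruction theorem applies.

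Next, the reconstruction theorem produces a distribution $\CR(\hat Y\Xi_j) \in \CS'(\R)$. To see it is of the form $\psi \mapsto \int \psi(t)\,dZ(t)$ for some continuous function $Z$, I would observe that $\CR(\hat Y\Xi_j)$ is, near each $s$, approximated at scale $\lambda$ by $\Pi_s(\hat Y\Xi_j)(s) = Y(s)\,\Pi_s\Xi_j + Y'_i(s)\,\Pi_s(\W_i\Xi_j)$, and by the definition of the model both $\Pi_s\Xi_j$ and $\Pi_s(\W_i\Xi_j)$ are (distributional) derivatives of the continuous functions $t\mapsto X^j_{s,t}$ and $t\mapsto \XX^{i,j}_{s,t}$ respectively. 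Hence locally $\CR(\hat Y\Xi_j)$ is the derivative of a continuous function; patching these local primitives together (they differ by constants on overlaps) yields a globally defined continuous $Z$ with $\CR(\hat Y\Xi_j) = dZ$, i.e. $(\CR\hat Y\Xi_j)(\psi) = \int\psi(t)\,dZ(t)$. Equivalently, one can simply \emph{define} $Z(t) = \bigl(\CR(\hat Y\Xi_j)\bigr)(\mathbf 1_{[0,t]})$ after checking this makes sense.

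Finally, to get the expansion $Z_{s,t} = Y(s) X^j_{s,t} + Y'_i(s)\XX^{i,j}_{s,t} + \CO(|t-s|^{3\alpha})$, I would test the bound \eref{e:boundRf} against (a smooth approximation of) the indicator $\mathbf 1_{[s,t]}$, or more cleanly, note that $Z_{s,t} = \bigl(\CR(\hat Y\Xi_j) - \Pi_s(\hat Y\Xi_j)(s)\bigr)(\mathbf 1_{[s,t]}) + \bigl(\Pi_s(\hat Y\Xi_j)(s)\bigr)(\mathbf 1_{[s,t]})$. The second term evaluates exactly to $Y(s) X^j_{s,t} + Y'_i(s)\XX^{i,j}_{s,t}$ by the definitions of $\Pi_s\Xi_j$ and $\Pi_s(\W_i\Xi_j)$, while the first term is $\CO(|t-s|^\gamma) = \CO(|t-s|^{3\alpha-1})$ from \eref{e:boundRf} applied with $\lambda \sim |t-s|$ — but since we are integrating over an interval of length $|t-s|$ against a function of size $O(1)$, we gain a further factor $|t-s|$, giving $\CO(|t-s|^{3\alpha})$ as claimed. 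The main obstacle here is the customary one in rough path theory: the indicator $\mathbf 1_{[s,t]}$ is not a valid test function of class $\CC^r$, so one must either work with a smooth mollification and carefully track that the error terms still telescope correctly, or invoke the remark after Definition~\ref{def:model} that $\Pi_s\tau$ extends canonically to $\CC^r$ (hence by a further limiting argument to such indicators), being slightly careful because $\mathbf 1_{[s,t]}$ is supported on an interval of size $|t-s|$ rather than being a fixed rescaled bump. I would handle this by splitting $\mathbf 1_{[s,t]}$ into a bulk part and boundary-layer parts of width $\sim|t-s|$ near the two endpoints and estimating each; this is routine but is the one genuinely technical point.
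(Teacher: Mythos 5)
Your proposal is correct and follows essentially the same route as the paper: verify $\hat Y\Xi_j\in\D^{3\alpha-1}$ from the definitions, apply the reconstruction theorem, then evaluate $\CR(\hat Y\Xi_j)$ against (approximations of) the indicator $\mathbf 1_{[s,t]}$ and use the normalisation $\psi_s^\lambda = \lambda^{-1}\mathbf 1_{[s,s+\lambda]}$ to trade the $\lambda^{3\alpha-1}$ error for $|t-s|^{3\alpha}$. You merely supply more detail than the paper at the two points the paper leaves implicit (the explicit $\D^{3\alpha-1}$ estimate, and how $Z$ is pieced together and tested against non-smooth $\psi$), but the argument is the same.
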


\begin{proof}
The fact that $\hat Y \Xi_i \in \D^{3\alpha-1}$ is an immediate consequence of the definitions. 
Since $\alpha > {1\over 3}$ by assumption, we can apply the reconstruction theorem to it, from
which it follows that there exists a unique distribution $\eta$ such that, if $\psi$ is a 
smooth compactly supported test function, one has
\begin{equ}
\eta(\psi_s^\lambda) = \int \psi_s^\lambda(t) Y(s)\,dX^j_t + \int \psi_s^\lambda(t) Y'_i(s)\,d\XX^{i,j}_{s,t} + \CO(\lambda^{3\alpha-1})\;.
\end{equ}
By a simple approximation argument, it turns out that one can take for $\psi$ the indicator function
of the interval $[0,1]$, so that
\begin{equ}
\eta(\one_{[s,t]}) = Y(s)\,X^j_{s,t} + Y'_i(s)\,\XX^{i,j}_{s,t} + \CO(|t-s|^{3\alpha})\;.
\end{equ}
Here, the reason why one obtains an exponent $3\alpha$ rather than $3\alpha -1$ is that 
it is really $|t-s|^{-1}\one_{[s,t]}$ that scales like an approximate $\delta$-distribution as $t \to s$.
\end{proof}

\begin{remark}
Using the formula \eref{e:formulaRf}, it is straightforward to verify that if
$X$ happens to be a smooth function and $\XX$ is defined from $X$ via \eref{e:defXX},
but this time viewing it as a definition for the left hand side, with the right hand side given
by a usual Riemann integral, then the function $Z$ constructed in Lemma~\ref{lem:RP}
coincides with the usual Riemann integral of $Y$ against $X^j$.
\end{remark}

\subsection{A classical result from harmonic analysis}
\label{sec:classical}

The considerations above suggest that a very natural space of distributions is obtained in the following way.
For some $\alpha > 0$, we denote by $\CC^{-\alpha}$ the space of all Schwartz distributions $\eta$ such that
$\eta$ belongs to the dual of $\CC^r$ with $r > \alpha$ some integer and such that
\begin{equ}
\bigl|\eta(\phi_x^\lambda)\bigr| \lesssim \lambda^{-\alpha}\;, 
\end{equ}  
uniformly over all $\phi \in \CB_r$ and $\lambda \in (0,1]$, and locally uniformly in $x$.
Given any compact set $\K$, the best possible constant such that the above bound holds uniformly 
over $x \in \K$ yields a seminorm. The collection of these seminorms endows $\CC^{-\alpha}$ with
a Fr\'echet space structure.

\begin{remark}
It turns out that the space $\CC^{-\alpha}$ is independent of the choice of $r$ in the definition given above,
which justifies the notation. Different values of $r$ give raise to equivalent seminorms.
\end{remark}

\begin{remark}
In terms of the scale of classical Besov spaces, the space $\CC^{-\alpha}$ is a local 
version of $\CB^{-\alpha}_{\infty,\infty}$. It is in some sense the largest space of distributions that is 
invariant under the scaling $\phi(\cdot) \mapsto \lambda^{-\alpha} \phi(\lambda^{-1} \cdot)$,
see for example \cite{Bourgain}.
\end{remark}

It is then a classical result in the ``folklore'' of harmonic analysis that the product
extends naturally to $\CC^{-\alpha} \times \CC^\beta$ into $\CS'(\R^d)$ if and only if 
$\beta > \alpha$. The reconstruction theorem yields a straightforward proof of the ``if''
part of this result:

\begin{theorem}\label{thm:barrier}
There is a continuous bilinear map $B \colon \CC^{-\alpha} \times \CC^\beta \to \CS'(\R^d)$
such that $B(f,g) = fg$ for any two continuous functions $f$ and $g$.
\end{theorem}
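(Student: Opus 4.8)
The plan is to realise the product $(f,g)\mapsto fg$ as a reconstruction operator, working with a regularity structure tailored so that the relevant modelled distribution has \emph{positive} regularity exponent: this is exactly where the hypothesis $\beta>\alpha$ enters, since we will land in $\D^\gamma$ with $\gamma:=\beta-\alpha>0$. First I would introduce a regularity structure $\TT$ whose model space $T$ is spanned by the abstract monomials $X^k$ with $|k|<\beta$ (graded as in Section~\ref{sec:polynomial}), together with new symbols $\Xi X^k$ for $|k|<\beta$, declared to have homogeneity $|k|-\alpha$; thus $A$ is finite and $\inf A=-\alpha$. For the structure group I would take that of the polynomial structure, acting by $\Gamma_h X^k=(X-h)^k$ and extended by $\Gamma_h(\Xi X^k)=\Xi\,(X-h)^k$, i.e. treating the symbol $\Xi$ as an inert multiplier. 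Since the definition of $\CC^{-\alpha}$ already fixes an integer $r>\alpha=|\inf A|$, the hypotheses of Theorem~\ref{theo:reconstruction} are available.

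Next I would attach to a given $f\in\CC^{-\alpha}$ a model $(\Pi,\Gamma)$ for $\TT$: keep the canonical polynomial model on the $X^k$, set $\Gamma_{xy}=\Gamma_{y-x}$, and let $\Pi_x(\Xi X^k)$ be the distribution $\psi\mapsto f\bigl((\cdot-x)^k\,\psi\bigr)$, that is, the pointwise product of $f$ with the monomial $(\cdot-x)^k$. The algebraic relations $\Gamma_{xy}\Gamma_{yz}=\Gamma_{xz}$ and $\Pi_x\Gamma_{xy}=\Pi_y$ are immediate, the polynomial part of \eref{e:bounds} is classical, and the bound on $\Pi_x(\Xi X^k)$ is the defining estimate of $\CC^{-\alpha}$ after rescaling: since $(y-x)^k\phi_x^\lambda(y)=\lambda^{|k|}\tilde\phi_x^\lambda(y)$ with $\tilde\phi(z)=z^k\phi(z)$ lying in a fixed multiple of $\CB_r$, one gets $\bigl|\bigl(\Pi_x(\Xi X^k)\bigr)(\phi_x^\lambda)\bigr|\lesssim\lambda^{|k|}\lambda^{-\alpha}=\lambda^{|k|-\alpha}$. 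This model depends linearly and continuously on $f$ in the topologies of Definition~\ref{def:model} and of $\CC^{-\alpha}$.

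Then, given $g\in\CC^\beta$, I would use the identification $\CC^\beta=\D^\beta$ for the polynomial model recalled in Section~\ref{sec:polynomial} to lift $g$ to $\hat g(x)=\sum_{|k|<\beta}g_k(x)\,X^k\in\D^\beta$ (with $g_0=g$), and define $f_g(x):=\sum_{|k|<\beta}g_k(x)\,\Xi X^k$, i.e. "multiply $\hat g$ by $\Xi$''. Because $\Xi$ is inert under $\Gamma$, one has $\|f_g(x)-\Gamma_{xy}f_g(y)\|_{|k|-\alpha}=\|\hat g(x)-\Gamma_{xy}\hat g(y)\|_{|k|}\lesssim|x-y|^{\beta-|k|}=|x-y|^{\gamma-(|k|-\alpha)}$, so $f_g\in\D^\gamma$ with $\gamma=\beta-\alpha>0$, and $g\mapsto f_g$ is linear and continuous. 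I would then define $B(f,g):=\CR f_g\in\CS'(\R^d)$. Bilinearity is immediate, and continuity of $B$ from $\CC^{-\alpha}\times\CC^\beta$ into $\CS'(\R^d)$ follows from the continuous dependence of $(\Pi,\Gamma)$ on $f$, of $f_g$ on $g$, and of $\CR$ on the pair (model, modelled distribution), the last being the local Lipschitz property recorded in the remark following Theorem~\ref{theo:reconstruction}.

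Finally, to identify $B(f,g)$ with the pointwise product when $f$ and $g$ are continuous, I would invoke Remark~\ref{rem:continuousModel}: in that case $\Pi_x\tau$ is a continuous function for every $\tau$, hence so is $\CR f_g$, and $(\CR f_g)(x)=\bigl(\Pi_x f_g(x)\bigr)(x)=\sum_{|k|<\beta}g_k(x)\,(x-x)^k f(x)=g(x)f(x)$, since every term with $k\neq 0$ vanishes at $y=x$. This gives $B(f,g)=fg$ and completes the argument. The only step that is more than bookkeeping is the verification that the Taylor data of a $\CC^\beta$ function obeys the $\D^\gamma$ bounds required for $f_g$; this is precisely the content of $\CC^\beta=\D^\beta$ for the polynomial model, which is routine for $\beta\notin\N$ and, for integer $\beta$, relies on the Lipschitz-type reading of $\CC^\beta$ indicated in Section~\ref{sec:polynomial}, so I do not expect a serious obstacle here.
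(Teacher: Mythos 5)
Your proposal is correct and follows essentially the same route as the paper: build a regularity structure consisting of polynomial symbols $X^k$ together with symbols $\Xi X^k$ of homogeneity $|k|-\alpha$, attach to the $\CC^{-\alpha}$ datum a model in which $\Pi_x(\Xi X^k)$ acts as $\psi\mapsto f((\cdot-x)^k\psi)$, lift the $\CC^\beta$ function to a modelled distribution of regularity $\gamma=\beta-\alpha>0$, apply the reconstruction theorem, and identify the result with the pointwise product via Remark~\ref{rem:continuousModel}. The only differences are cosmetic (you truncate the model space to $|k|<\beta$ rather than keeping all of $\N\cup(\N-\alpha)$, and you spell out the rescaling argument for the analytic bound on $\Pi_x(\Xi X^k)$ that the paper leaves as "straightforward").
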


\begin{proof}
Assume from now on that $\xi \in \CC^{-\alpha}$ for some $\alpha > 0$ and that $f \in \CC^\beta$
for some $\beta > \alpha$. 
We then build a regularity structure $\TT$ in the following way. For the set $A$,
we take $A = \N \cup (\N-\alpha)$ and
for $T$, we set $T = V \oplus W$,
where each one of the spaces $V$ and $W$ is a copy of the polynomial model in $d$ commuting variables
constructed in Section~\ref{sec:polynomial}.
We also choose $\Gamma$ as in the canonical
model, acting simultaneously on each of the two instances.

As before, we denote by $X^k$ the canonical basis vectors in $V$. We also use the suggestive notation
``$\Xi X^k$'' for the corresponding basis vector in $W$, but we postulate that $\Xi X^k \in T_{\alpha + |k|}$ 
rather than $\Xi X^k \in T_{|k|}$.
Given any distribution $\xi \in \CC^{-\alpha}$, we then define a model $(\Pi^\xi,\Gamma)$,
where $\Gamma$ is as in the canonical model, while $\Pi^\xi$ acts as
\begin{equ}
\bigl(\Pi^\xi_x  X^k\bigr)(y) = (y-x)^k \;,\qquad
\bigl(\Pi^\xi_x  \Xi X^k\bigr)(y) = (y-x)^k \xi(y)\;,
\end{equ}
with the obvious abuse of notation in the second expression. 
It is then straightforward to verify that 
$\Pi_y = \Pi_x\circ \Gamma_{xy}$ and that the relevant analytical bounds are satisfied, so that this is indeed a model.

Denote now by $\CR^\xi$ the reconstruction map associated to the model $(\Pi^\xi,\Gamma)$ and, for $f \in \CC^\beta$,
denote by $F$ the element in $\D^\beta$ given by the local Taylor expansion of $f$ of order $\beta$ at each point.
Note that even though the space $\D^\beta$ does in principle depend on the choice of model, in our situation
$F \in \D^\beta$ for any choice of $\xi$. 
It follows immediately from the definitions that the map $x \mapsto \Xi F(x)$ belongs to $\D^{\beta - \alpha}$
so that, provided that $\beta > \alpha$, one can apply the reconstruction operator to it. This suggests that the
multiplication operator we are looking for can be defined as
\begin{equ}
B(f,\xi) = \CR^\xi \bigl(\Xi F\bigr)\;.
\end{equ}
By Theorem~\ref{theo:reconstruction}, 
this is a jointly continuous map from $\CC^\beta \times \CC^{-\alpha}$ into $\CS'(\R^d)$, provided that 
$\beta > \alpha$.
If $\xi$ happens to be a smooth function, then it follows immediately from Remark~\ref{rem:continuousModel} that 
$B(f,\xi) = f(x)\xi(x)$, so that $B$ is indeed the requested continuous extension of the usual product.
\end{proof}

\begin{remark}
As a consequence of \eref{e:boundRf}, it is actually easy to show that $B \colon \CC^{-\alpha} \times \CC^\beta \to \CC^{-\alpha}$.
\end{remark}

\section{Products and composition by smooth functions}

One of the main purposes of the theory presented here is to give a robust way to multiply distributions
(or functions with distributions) that goes beyond the barrier illustrated by Theorem~\ref{thm:barrier}.
Provided that our functions / distributions are represented as elements in $\D^\gamma$ for some
model and regularity structure, we can multiply their ``Taylor expansions'' pointwise, provided that
we give ourselves a table of multiplication on $T$.

It is natural to consider products with the following properties. Here, given a regularity structure,
we say that a subspace
$V \subset T$ is a \textit{sector} if it is invariant under the action of the structure group $G$
and if it can furthermore be written as $V = \bigoplus_{\alpha \in A} V_\alpha$
with $V_\alpha \subset T_\alpha$.

\begin{definition}
Given a regularity structure $(T,A,G)$ and two sectors $V, \bar V \subset T$, a \textit{product}
on $(V,\bar V)$ is a bilinear map $\star \colon V \times \bar V \to T$ such that, for any $\tau \in V_\alpha$
and $\bar \tau \in \bar V_\beta$, one has $\tau \star \bar \tau \in T_{\alpha + \beta}$ and such that,
for any element $\Gamma \in G$, one has $\Gamma(\tau \star \bar \tau) = \Gamma \tau \star \Gamma \bar \tau$. 
\end{definition}

\begin{remark}
The condition that homogeneities add up under multiplication is very natural bearing in mind the case
of the polynomial regularity structure. The second condition is also very natural since it merely states that
if one reexpands the product of two ``polynomials'' around a different point, one should obtain the same result
as if one reexpands each factor first and then multiplies them together.
\end{remark}

Given such a product, we can ask ourselves when the pointwise product of an element $\D^{\gamma_1}$
with an element in $\D^{\gamma_2}$ again belongs to some $\D^\gamma$. In order to answer this question, 
we introduce the notation $\D_\alpha^\gamma$ to denote those elements $f \in \D^\gamma$ such that 
furthermore 
\begin{equ}
f(x) \in T_\alpha^+ \equiv \bigoplus_{\beta \ge \alpha} T_\beta\;,
\end{equ}
for every $x$. With this notation at hand, 
it is not too difficult to verify that one has the following result:

\begin{theorem}\label{theo:mult}
Let $f_1 \in \D^{\gamma_1}_{\alpha_1}(V)$, $f_2 \in \D^{\gamma_2}_{\alpha_2}(\bar V)$, and let $\star$
be a product on $(V,\bar V)$. Then, the function $f$ given by $f(x) = f_1(x) \star f_2(x)$ belongs to $\D_{\alpha}^\gamma$
with
\begin{equ}[e:formulaGamma]
\alpha = \alpha_1 +\alpha_2\;,\qquad \gamma = (\gamma_1 + \alpha_2)\wedge (\gamma_2 + \alpha_1)\;.
\end{equ} 
\end{theorem}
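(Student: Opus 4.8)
The strategy is to verify the defining inequality \eref{e:defHolder} for $f(x) = f_1(x)\star f_2(x)$ directly, by inserting and subtracting a mixed term and using the compatibility of $\star$ with $G$. First I would fix a compact set $\K$ and two points $x,y\in\K$, and write
\begin{equ}
f(x) - \Gamma_{xy}f(y) = \bigl(f_1(x)-\Gamma_{xy}f_1(y)\bigr)\star f_2(x) + \bigl(\Gamma_{xy}f_1(y)\bigr)\star\bigl(f_2(x)-\Gamma_{xy}f_2(y)\bigr)\;,
\end{equ}
using crucially the identity $\Gamma_{xy}(f_1(y)\star f_2(y)) = (\Gamma_{xy}f_1(y))\star(\Gamma_{xy}f_2(y))$ from the definition of a product. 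Then I would estimate the component in each $T_\zeta$ of the two terms separately, bounding $\|\,\cdot\,\|_\zeta$ by summing over the finitely many pairs of homogeneities that can produce $T_\zeta$ under $\star$.

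For the first term, a contribution to $T_\zeta$ comes from pairing $f_1(x)-\Gamma_{xy}f_1(y)$ in some $T_{\beta_1}$ (with $\beta_1\ge\alpha_1$) with $f_2(x)$ in some $T_{\beta_2}$ (with $\beta_2\ge\alpha_2$) where $\beta_1+\beta_2=\zeta$; by $f_1\in\D^{\gamma_1}$ the first factor is of size $|x-y|^{\gamma_1-\beta_1}$ and the second is bounded, so each such term is $\lesssim |x-y|^{\gamma_1-\beta_1} = |x-y|^{\gamma_1+\beta_2-\zeta} \ge |x-y|^{\gamma_1+\alpha_2-\zeta}$ (the inequality on exponents using $\beta_2\ge\alpha_2$ and $|x-y|\le\diam\K$, so larger exponents give smaller quantities — one absorbs the extra power into the constant). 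Symmetrically, the second term gives $|x-y|^{\gamma_2+\alpha_1-\zeta}$, where one first uses the model bound $\|\Gamma_{xy}f_1(y)\|_{\beta_1}\lesssim 1$ (again a finite sum over homogeneities $\le\gamma_1$, using \eref{e:bounds} together with the boundedness of $f_1$ on $\K$) and then $f_2\in\D^{\gamma_2}$. Combining, the $T_\zeta$-component of $f(x)-\Gamma_{xy}f(y)$ is bounded by $|x-y|^{\gamma-\zeta}$ with $\gamma = (\gamma_1+\alpha_2)\wedge(\gamma_2+\alpha_1)$, which is exactly \eref{e:defHolder}; and the fact that $f_i(x)\in T^+_{\alpha_i}$ together with homogeneities adding under $\star$ gives $f(x)\in T^+_{\alpha}$ with $\alpha=\alpha_1+\alpha_2$.

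The only point requiring a little care — and the main obstacle — is the bookkeeping of homogeneities: one must check that only finitely many pairs $(\beta_1,\beta_2)$ contribute to any fixed $T_\zeta$ (which follows since $A$ is locally finite and bounded below, so $\beta_1$ ranges over $A\cap[\alpha_1,\gamma_1)$ and then $\beta_2=\zeta-\beta_1$ is determined), and that one only ever needs the $\D^{\gamma_i}$ bound for components with $\beta_i<\gamma_i$ — components with $\beta_i\ge\gamma_i$ in $f_i(x)-\Gamma_{xy}f_i(y)$ need not be small, but they are also never required, since if $\beta_1\ge\gamma_1$ then $\zeta=\beta_1+\beta_2\ge\gamma_1+\alpha_2\ge\gamma$, and the condition \eref{e:defHolder} is only imposed for $\zeta<\gamma$. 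Once this is sorted out, the estimates are routine; I would organise the write-up so that the insertion identity and the homogeneity count come first, and the two symmetric estimates follow immediately.
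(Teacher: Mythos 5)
Your telescoping identity is correct, but it differs from the paper's decomposition, and the difference is precisely where a gap appears. The paper expands $\Gamma_{xy}f(y)-f(x)=a\star b + a\star f_2(x) + f_1(x)\star b$ with $a=\Gamma_{xy}f_1(y)-f_1(x)$ and $b=\Gamma_{xy}f_2(y)-f_2(x)$, a symmetric three-term split in which the undifferenced factor is always $f_i(x)$ and therefore genuinely lies in $T_{\alpha_i}^+$. In your two-term split, the second summand carries the undifferenced factor $\Gamma_{xy}f_1(y)$, and your estimate tacitly assumes it lies in $T_{\alpha_1}^+$, so that only pairs $(\beta_1,\beta_2)$ with $\beta_1\ge\alpha_1$, hence $\beta_2\le\zeta-\alpha_1$, contribute, giving $|x-y|^{\gamma_2-\beta_2}\lesssim|x-y|^{\gamma_2+\alpha_1-\zeta}$. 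But $\Gamma_{xy}$ does \emph{not} preserve $T_{\alpha_1}^+$: for $\tau\in T_{\alpha'}$ one only has $\Gamma_{xy}\tau-\tau\in\bigoplus_{\beta<\alpha'}T_\beta$, so $\Gamma_{xy}f_1(y)$ generically has nonzero components in $T_{\beta_1}$ for $\beta_1$ well below $\alpha_1$. For such a $\beta_1$, your bounds $\|\Gamma_{xy}f_1(y)\|_{\beta_1}\lesssim 1$ and $\|f_2(x)-\Gamma_{xy}f_2(y)\|_{\beta_2}\lesssim|x-y|^{\gamma_2-\beta_2}=|x-y|^{\gamma_2-\zeta+\beta_1}$ only give the exponent $\gamma_2-\zeta+\beta_1<\gamma_2+\alpha_1-\zeta$, which is too weak.

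The gap is repairable within your scheme: do not discard the power of $|x-y|$ that \eref{e:bounds} supplies. Since $f_1(y)\in T_{\alpha_1}^+$, each homogeneous piece $\tau_{\alpha'}$ with $\alpha'\ge\alpha_1$ contributes $\lesssim|x-y|^{\alpha'-\beta_1}$ to $\|\Gamma_{xy}f_1(y)\|_{\beta_1}$, so on $\K$ one has $\|\Gamma_{xy}f_1(y)\|_{\beta_1}\lesssim|x-y|^{\max(\alpha_1-\beta_1,0)}$, and the combined exponent becomes $\max(\alpha_1-\beta_1,0)+\gamma_2-\zeta+\beta_1\ge\gamma_2+\alpha_1-\zeta$ in every case. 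The paper's three-term split avoids the issue entirely, at the cost of one extra cross term $a\star b$, which it bounds by $|x-y|^{\gamma_1+\gamma_2-\zeta}$ with $\gamma_1+\gamma_2\ge\gamma$. Two small further remarks: your parenthetical ``$\beta_1\ge\alpha_1$'' in the first-term estimate is also unjustified for the same reason, but there it is harmless since the argument only uses $\beta_2\ge\alpha_2$; and, as the paper notes at the outset, one should truncate $f_1\star f_2$ to $\bigoplus_{\beta<\gamma}T_\beta$ before asserting membership of $\D^\gamma$, since the product produces components up to homogeneity close to $\gamma_1+\gamma_2$.
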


\begin{proof}
It is clear that $f(x) \in \bigoplus_{\beta > \alpha} T_\beta$, so it remains to show that 
it belongs to $\D^\gamma$. 
Furthermore, since we are only interested in showing that $f_1\star f_2 \in \D^\gamma$, we
 discard all of the components in $T_\beta$ for $\beta \ge \gamma$. 

By the properties of the product $\star$, it remains to obtain a bound of the type 
\begin{equ}
\|\Gamma_{xy} f_1(y) \star \Gamma_{xy} f_2(y) - f_1(x) \star f_2(x)\|_\beta \lesssim |x-y|^{\gamma-\beta} \;.
\end{equ}
By adding and subtracting suitable terms, we obtain
\begin{equs}
\|\Gamma_{xy} f(y) - f(x)\|_\beta
&\le  \|\bigl(\Gamma_{xy} f_1(y) - f_1(x)\bigr)\star \bigl(\Gamma_{xy} f_2(y) - f_2(x)\bigr)\|_\beta \label{e:splitProd}\\
&\quad + \|\bigl(\Gamma_{xy} f_1(y) - f_1(x)\bigr)\star f_2(x)\|_\beta + \|f_1(x)\star \bigl(\Gamma_{xy} f_2(y) - f_2(x)\bigr)\|_\beta\;. 
\end{equs}
It follows from the properties of the product $\star$ that the first term in \eref{e:splitProd} 
is bounded by a constant times
\begin{equs}
\sum_{\beta_1 + \beta_2 =\beta} &\|\Gamma_{xy} f_1(y) - f_1(x)\|_{\beta_1} \|\Gamma_{xy} f_2(y) - f_2(x)\|_{\beta_2} \\
&\lesssim \sum_{\beta_1 + \beta_2 =\beta} \|x-y\|^{\gamma_1-\beta_1}\|x-y\|^{\gamma_2-\beta_2} 
\lesssim \|x-y\|^{\gamma_1+\gamma_2-\beta}\;.
\end{equs}
Since $\gamma_1 + \gamma_2 \ge \gamma$, this bound is as required.
The second term is bounded by a constant times
\begin{equ}
\sum_{\beta_1 + \beta_2 = \beta} \|\Gamma_{xy} f_1(y) - f_1(x)\|_{\beta_1} \|f_2(x)\|_{\beta_2} \lesssim \|x-y\|^{\gamma_1-\beta_1} \,\one_{\beta_2 \ge \alpha_2} \lesssim
\|x-y\|^{\gamma_1+\alpha_2-\beta}\;,
\end{equ}
where the second inequality uses the identity $\beta_1 + \beta_2 = \beta$. 
Since $\gamma_1 + \alpha_2 \ge \gamma$, this bound is again of the required type. 
The last term is bounded similarly by reversing 
the roles played by $f_1$ and $f_2$.
\end{proof}

\begin{remark}
It is clear that the formula \eref{e:formulaGamma} for $\gamma$ is optimal in general as can be seen from
the following two ``reality checks''. First, consider the case of the polynomial model
and take $f_i \in \CC^{\gamma_i}$. In this case, the truncated Taylor series $F_i$ for $f_i$ belong to $\D_0^{\gamma_i}$. 
It is clear that in this case, the product cannot be expected to have better regularity than $\gamma_1 \wedge \gamma_2$ in
general, which is indeed what \eref{e:formulaGamma} states.
The second reality check comes from the example of Section~\ref{sec:classical}.
In this case, one has $F \in \D_0^\beta$, while the constant function $x \mapsto \Xi$ belongs
to $\D_{-\alpha}^\infty$ so that, according to \eref{e:formulaGamma}, one expects their product
to belong to $\D_{-\alpha}^{\beta-\alpha}$, which is indeed the case.
\end{remark}

It turns out that if we have a product on a regularity structure, then in many cases this also naturally
yields a notion of composition with smooth functions. Of course, one could in general not expect to be able to
compose a smooth function with a distribution of negative order. As a matter of fact, we will only define
the composition of smooth functions with elements in some $\D^\gamma$ for which it is guaranteed that the
reconstruction operator yields a continuous function. One might think at this case that this would yield
a triviality, since we know of course how to compose arbitrary continuous function. The subtlety is that we
would like to design our composition operator in such a way that the result is again an element of $\D^\gamma$.

For this purpose, we say that a given sector $V \subset T$ is \textit{function-like} if 
$\alpha < 0 \Rightarrow V_\alpha = 0$ and if $V_0$ is one-dimensional. (Denote the unit vector of $V_0$ by $\one$.)
We will furthermore always assume that our models are \textit{normal} in the sense that $\bigl(\Pi_x \one\bigr)(y) = 1$.
I this case, it turns out that if $f \in \D^\gamma(V)$, then $\CR f$ is a continuous function and one has the identity
$\bigl(\CR f\bigr)(x) = \scal{\one,f(x)}$, where we denote by $\scal{\one,\cdot}$ the element in the dual of $V$ which
picks out the prefactor of $\one$.

Assume now that we are given a regularity structure with a function-like sector $V$ and a product
$\star \colon V \times V \to V$. For any smooth function $G \colon \R \to \R$ and any $f \in \D^\gamma(V)$
with $\gamma > 0$, we can then \textit{define} $G(f)$ to be the $V$-valued function given by
\begin{equ}
\bigl(G\circ f\bigr)(x) = \sum_{k \ge 0} {G^{(k)}(\bar f(x)) \over k!} \tilde f(x)^{\star k}\;,
\end{equ}
where we have set
\begin{equ}
\bar f(x) = \scal{\one, f(x)}\;,\qquad \tilde f(x) = f(x) - \bar f(x)\one\;.
\end{equ}
Here, $G^{(k)}$ denotes the $k$th derivative of $G$ and $\tau^{\star k}$ denotes the $k$-fold product
$\tau\star\cdots \star \tau$. We also used the usual conventions $G^{(0)} = G$ and $\tau^{\star 0} = \one$.

Note that as long as $G$ is $\CC^\infty$, this expression is well-defined. Indeed, by assumption, there exists
some $\alpha_0 > 0$ such that $\tilde f(x) \in T_{\alpha_0}^+$. By the properties of the product, this
implies that one has $\tilde f(x)^{\star k} \in T_{k \alpha_0}^+$. As a consequence, when considering the
component of $G \circ f$ in $T_\beta$ for $\beta < \gamma$,  the only terms that give a contribution
are those with $k < \gamma / \alpha_0$. Since we cannot possibly hope in general that $G \circ f \in \D^{\gamma'}$
for some $\gamma' > \gamma$, this is all we really need.

It turns out that if $G$ is sufficiently regular, then the map $f \mapsto G\circ f$ enjoys similarly 
nice continuity properties to what we are used to from classical H\"older spaces. The following result
is the analogue in this context to the well-known fact that the composition of a $\CC^\gamma$ function with a sufficiently smooth function $G$ is again of class $\CC^\gamma$.

\begin{proposition}
In the same setting as above, provided that $G$ is of class $\CC^k$ with $k > \gamma / \alpha_0$,
the map $f \mapsto G \circ f$ is continuous from $\D^{\gamma}(V)$ into itself. If $k > \gamma / \alpha_0 + 1$,
then it is locally Lipschitz continuous.
\end{proposition}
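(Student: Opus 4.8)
The plan is to reduce the statement to two ingredients already at our disposal: the multiplicative bounds of Theorem~\ref{theo:mult} --- together with the fact, visible from its proof, that the pointwise product $\star$ is not merely well defined but \emph{bounded}, i.e.\ the relevant $\D^\gamma$-seminorms of $f_1\star f_2$ are controlled bilinearly by those of $f_1$ and $f_2$ on the same compact --- and the classical fact that composing a $\CC^k$ function with a Hölder function is again Hölder, applied to the scalar field $\bar f(\cdot)=\scal{\one,f(\cdot)}$.

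First I would check that $G\circ f\in\D^\gamma(V)$, so that ``into itself'' is meaningful. The defining series is finite: only indices $k$ with $k\alpha_0<\gamma$ contribute components below homogeneity $\gamma$, so $k$ runs over $0,\dots,k_{\max}$ with $k_{\max}<\gamma/\alpha_0$, and $\tilde f(x)^{\star k}\in T^+_{k\alpha_0}$. One observes that $\bar f$ is Hölder continuous of some exponent $\rho>0$ (in fact $\rho=\gamma\wedge\alpha_0$), with $\|\bar f\|_{\CC^\rho}$ controlled by $\|f\|_{\D^\gamma}$ --- this follows directly from the $T_0$-component of the defining bound of $\D^\gamma$ together with the action of $\Gamma_{xy}$ on homogeneities $\ge\alpha_0$, and needs no appeal to the reconstruction operator. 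Consequently each $x\mapsto G^{(k)}(\bar f(x))$ is Hölder, and to verify the $\D^\gamma$-bound for $G\circ f$ one expands $\Gamma_{xy}\bigl(G\circ f\bigr)(y)$ by re-expanding each factor $G^{(k)}(\bar f(y))$ about $\bar f(x)$ via Taylor's formula and each $\tilde f(y)^{\star k}$ via $\Gamma_{xy}$, then checks that the diagonal contributions reconstruct $(G\circ f)(x)$ while all remaining contributions --- the increment-products controlled by Theorem~\ref{theo:mult} and the Taylor remainders of $G$ --- are of order $|x-y|^{\gamma-\beta}$ or smaller at every level $\beta<\gamma$. This is exactly where the hypothesis $k>\gamma/\alpha_0$ enters: it provides enough derivatives of $G$ for the Taylor remainders, multiplied by their $k\alpha_0$-homogeneous prefactors, to be negligible. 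This re-expansion bookkeeping, which runs parallel to the proof of Theorem~\ref{theo:mult} with one extra Taylor layer stacked on top, is the main technical obstacle; everything else is soft.

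For continuity I would run the same computation on the difference. Fix a compact $\K$ and let $f,g$ range over a bounded subset of $\D^\gamma(V)$; term by term, $G^{(k)}(\bar f)\,\tilde f^{\star k}-G^{(k)}(\bar g)\,\tilde g^{\star k}$ splits as $\bigl(G^{(k)}(\bar f)-G^{(k)}(\bar g)\bigr)\tilde f^{\star k}+G^{(k)}(\bar g)\bigl(\tilde f^{\star k}-\tilde g^{\star k}\bigr)$, and $\tilde f^{\star k}-\tilde g^{\star k}=\sum_{j=0}^{k-1}\tilde f^{\star j}\star(\tilde f-\tilde g)\star\tilde g^{\star(k-1-j)}$ by a standard telescoping (using associativity of $\star$ and that $\one$ is its unit, so that $\tau^{\star k}$ is meaningful). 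Here $\|\tilde f-\tilde g\|_{\D^\gamma}$ and $\|\bar f-\bar g\|_{\CC^\rho}$ are both controlled by $\|f-g\|_{\D^\gamma}$ on $\K$ --- these maps are linear and local --- the $\star$-products are handled by the bounded bilinear estimates from Theorem~\ref{theo:mult}, and the prefactor $G^{(k)}(\bar f)-G^{(k)}(\bar g)$ is controlled, in the relevant $\D^\bullet$-sense, by the modulus of continuity of $G^{(k)}$ evaluated at a quantity that tends to $0$. It is precisely this last point that makes mere $\CC^k$-regularity (hence uniform continuity of $G^{(k)}$ on bounded sets) suffice for continuity rather than a Lipschitz bound. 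Summing the finitely many terms yields $\|G\circ f-G\circ g\|_{\D^\gamma;\K}\to0$ as $\|f-g\|_{\D^\gamma;\K}\to0$ with $f,g$ in a fixed bounded set, which is precisely continuity on the Fréchet space $\D^\gamma(V)$.

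For local Lipschitz continuity the only change is that $k>\gamma/\alpha_0+1$ furnishes one further bounded derivative of $G$: this replaces each ``modulus of continuity of $G^{(k)}$'' above by a genuine bound $|G^{(k)}(a)-G^{(k)}(b)|\le\|G^{(k+1)}\|_{\infty,\mathrm{loc}}\,|a-b|$, and likewise turns the differences of the Taylor remainders of $G$ arising in the re-expansion into quantities bounded linearly in $\|f-g\|_{\D^\gamma}$. One then obtains $\|G\circ f-G\circ g\|_{\D^\gamma;\K}\lesssim\|f-g\|_{\D^\gamma;\K}$ locally uniformly, with no new idea required beyond the extra derivative.
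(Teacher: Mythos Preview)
The paper does not give its own proof of this proposition: it simply refers to \cite{Regular} and remarks that the argument is ``somewhat lengthy, but ultimately rather straightforward''. Your outline is precisely the standard strategy carried out there --- Taylor-expand the scalar coefficients $G^{(k)}(\bar f(y))$ about $\bar f(x)$, use the multiplicativity of $\Gamma_{xy}$ with respect to $\star$ to handle $(\Gamma_{xy}\tilde f(y))^{\star k}$, and check that the resulting terms recombine into $(G\circ f)(x)$ plus remainders of order $|x-y|^{\gamma-\beta}$ --- and you correctly identify both the increment bookkeeping as the main technical obstacle and the role of the extra derivative in upgrading continuity to local Lipschitz continuity. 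There is nothing in the paper to compare against and no gap to flag; your sketch is consonant with what the cited reference does in detail.
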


The proof of this result can be found in \cite{Regular}. It is somewhat lengthy, but ultimately 
rather straightforward.

\subsection{A simple example}

A very important remark is that even 
if both $\CR f_1$ and $\CR f_2$ happens to be continuous functions, this does
\textit{not} in general imply that $\CR(f_1 \star f_2)(x) = (\CR f_1)(x) \, (\CR f_2)(x)$!
For example, fix $\kappa < 0$ and consider the regularity structure given by $A = (-2\kappa,-\kappa, 0)$,
with each $T_\alpha$ being a copy of $\R$ given by $T_{-n\kappa} = \scal{\Xi^n}$. 
We furthermore take for $G$ the trivial group. This regularity structure comes with an obvious product
by setting $\Xi^m \star \Xi^n = \Xi^{m+n}$ provided that $m+n \le 2$.

Then, we could for example take as a model for $\TT = (T,A,G)$:
\begin{equ}
\bigl(\Pi_x \Xi^0\bigr)(y) = 1\;,\quad 
\bigl(\Pi_x \Xi\bigr)(y) = 0\;,\quad \label{e:modelNonStandard}
\bigl(\Pi_x \Xi^2\bigr)(y) = c\;,
\end{equ}
where $c$ is an arbitrary constant. Let furthermore
\begin{equ}
F_1(x) = f_1(x) \Xi^0 + f_1'(x) \Xi\;,\qquad
F_2(x) = f_2(x) \Xi^0 + f_2'(x) \Xi\;.
\end{equ}
Since our group $G$ is trivial, one has
$F_i \in \D^\gamma$ provided that each of the $f_i$ belongs to $\D^\gamma$
and each of the $f_i'$  belongs to $\D^{\gamma + \kappa}$. (And one has $\gamma + \kappa < 1$.)
One furthermore has the identity $\bigl(\CR F_i\bigr)(x) = f_i(x)$. 

However, the pointwise product is given by
\begin{equ}
\bigl(F_1\star F_2\bigr)(x) = f_1(x)f_2(x) \Xi^0 + \bigl(f_1'(x) f_2(x) + f_2'(x) f_1(x)\bigr) \Xi + f_1'(x) f_2'(x) \Xi^2\;,
\end{equ}
which by Theorem~\ref{theo:mult} belongs to $\D^{\gamma - \kappa}$. Provided that $\gamma > \kappa$,
one can then apply the reconstruction operator to this product and  one obtains
\begin{equ}
\CR \bigl(F_1\star F_2\bigr)(x) = f_1(x)f_2(x) + c f_1'(x) f_2'(x)\;,
\end{equ}
which is obviously different from the pointwise product $\CR F_1 \cdot \CR F_2$.

How should this be interpreted? For $n > 0$, we could have defined a model $\Pi^{(n)}$ by
\begin{equ}
\bigl(\Pi_x \Xi^0\bigr)(y) = 1\;,\quad 
\bigl(\Pi_x \Xi\bigr)(y) = \sqrt{2c} \sin(nx)\;,\quad 
\bigl(\Pi_x \Xi^2\bigr)(y) = 2c \sin^2(nx)\;.
\end{equ}
Denoting by $\CR^{(n)}$ the corresponding reconstruction operator, we have the identity
\begin{equ}
\bigl(\CR^{(n)}F_i\bigr)(x) = f_i(x) + \sqrt{2c} f_i'(x)  \sin(nx)\;,
\end{equ}
as well as $\CR^{(n)}(F_1 \star F_2) = \CR^{(n)}F_1 \cdot \CR^{(n)} F_2$. 
As a model, the model $\Pi^{(n)}$ actually converges to the limiting model $\Pi$ defined in \eref{e:modelNonStandard}.
As a consequence of the continuity of the reconstruction operator, this implies that
\begin{equ}
\CR^{(n)}F_1 \cdot \CR^{(n)} F_2 = \CR^{(n)}(F_1 \star F_2) \to \CR (F_1 \star F_2) \neq \CR F_1 \cdot \CR F_2\;,
\end{equ}
which is of course also easy to see ``by hand''. This shows that in some cases, the ``non-standard'' models
as in \eref{e:modelNonStandard} can be interpreted as limits of ``standard'' models for which the usual
rules of calculus hold. Even this is however not always the case. 

\section{Schauder estimates and admissible models}
\label{sec:Schauder}

One of the reasons why the theory of regularity structures is very successful at providing detailed
descriptions of the small-scale features of solutions to semilinear (S)PDEs is that it comes with
very sharp Schauder estimates. Recall that the classical Schauder estimates state that if $K\colon \R^d \to \R$
is a kernel that is smooth everywhere, except for a singularity at the origin that is (approximately) homogeneous
of degree $\beta - d$ for some $\beta > 0$, then the operator $f \mapsto K * f$ maps $\CC^\alpha$ into $\CC^{\alpha +\beta}$
for every $\alpha \in \R$, except for those values for which $\alpha + \beta \in \N$. (See for example \cite{MR1459795}.)

It turns out that similar Schauder estimates hold in the context of general regularity structures in the sense
that it is in general possible to build an operator $\CK \colon \D^\gamma \to \D^{\gamma+\beta}$ with the
property that $\CR \CK f = K * \CR f$. Of course, such a statement can only be true if our regularity structure
contains not only the objects necessary to describe $\CR f$ up to order $\gamma$, but also those required
to describe $K * \CR f$ up to order $\gamma + \beta$. What are these objects? At this stage, it might be useful
to reflect on the effect of the convolution of a singular function (or distribution) with $K$. 

Let us assume for a moment that $f$ is also smooth everywhere, except at some point $x_0$. It is then 
straightforward to convince ourselves that $K * f$ is also smooth everywhere, except at $x_0$. Indeed,
for any $\delta > 0$, we can write $K = K_\delta + K_\delta^c$, where $K_\delta$ is supported in a ball of radius
$\delta$ around $0$ and $K_\delta^c$ is a smooth function. Similarly, we can decompose $f$ as $f = f_\delta + f_\delta^c$,
where $f_\delta$ is supported in  a $\delta$-ball around $x_0$ and $f_\delta^c$ is smooth. Since the convolution of a 
smooth function with an arbitrary distribution is smooth, it follows that the only non-smooth component of $K * f$
is given by $K_\delta * f_\delta$, which is supported in a ball of radius $2\delta$ around $x_0$. Since $\delta$ was
arbitrary, the statement follows. By linearity, this strongly suggests that the local structure of the singularities of $K*f$ can be described completely by only using knowledge on the local structure of the singularities of $f$.
It also suggests that the ``singular part'' of the operator $\CK$ should be local, with the non-local
parts of $\CK$ only contributing to the ``regular part''.

This discussion suggests that we certainly need the following ingredients to build an operator $\CK$ with the
desired properties:
\begin{claim}
\item The canonical polynomial structure should be part of our regularity structure in order to
be able to describe the ``regular parts''.
\item We should be given an ``abstract integration operator'' $\CI$ on $T$ which describes how the 
``singular parts'' of $\CR f$ transform under convolution by $K$.
\item We should restrict ourselves to models which are ``compatible'' with the action of $\CI$ in the sense
that the behaviour of $\Pi_x \CI \tau$ should relate in a suitable way to the behaviour of $K * \Pi_x \tau$ near $x$.
\end{claim}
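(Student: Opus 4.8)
The plan is to make the three ingredients precise and then build $\CK$ by hand, following the classical proof of Schauder estimates by dyadic decomposition of the kernel. Assume $K=\sum_{n\ge0}K_n$ with each $K_n$ smooth, supported in a ball of radius $\sim2^{-n}$ about the origin, annihilating all polynomials of degree $\le r$, and obeying $\int|D^kK_n|\lesssim2^{n(|k|-\beta)}$ for $|k|\le r$; any reasonable kernel of order $\beta$ decomposes this way up to a smooth remainder, which only affects the polynomial / regular part and may be ignored. Assume an abstract integration map $\CI\colon T\to T$ with $\CI T_\alpha\subset T_{\alpha+\beta}$, vanishing on the polynomial sector, and with $\Gamma\CI-\CI\Gamma$ polynomial-valued for every $\Gamma\in G$; and assume the model is \emph{admissible}, meaning that $\Pi_x\CI\tau$ equals $K*\Pi_x\tau$ minus its order-$(|\tau|+\beta)$ Taylor polynomial based at $x$. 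Then, for $f\in\D^\gamma$ with $\gamma>0$ (and excluding, exactly as in the classical case, the integer values of $\gamma+\beta$ and of the homogeneities $|\tau|+\beta$ that occur), set
\[
\bigl(\CK f\bigr)(x)=\CI f(x)+\bigl(\mathcal{J}(x)\bigr)f(x)+\bigl(\mathcal{N}f\bigr)(x)\;,
\]
where $\mathcal{J}(x)$ maps into the polynomial sector via $\mathcal{J}(x)\tau=\sum_{|k|<|\tau|+\beta}{X^k \over k!}\bigl(D^kK*\Pi_x\tau\bigr)(x)$, and $\mathcal{N}f$ maps into the polynomial sector via $\bigl(\mathcal{N}f\bigr)(x)=\sum_{|k|<\gamma+\beta}{X^k \over k!}\bigl(D^kK*(\CR f-\Pi_x f(x))\bigr)(x)$; the last pairing is legitimate because $\CR f-\Pi_x f(x)$ vanishes to order $\gamma$ at $x$ by \eref{e:boundRf}, while $D^kK$ is integrable against such an object as soon as $|k|<\gamma+\beta$. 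The two polynomial corrections are rigged so that $\Pi_x\bigl(\CI+\mathcal{J}(x)\bigr)\tau=K*\Pi_x\tau$ exactly, which is what will make the model-dependent contributions telescope below.

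The heart of the argument is to verify $\CK f\in\D^{\gamma+\beta}$, i.e. $\|\CK f(x)-\Gamma_{xy}\CK f(y)\|_\delta\lesssim|x-y|^{\gamma+\beta-\delta}$ for all $\delta<\gamma+\beta$, locally uniformly. I would expand this difference using the definitions, use the relation between $\mathcal{J}(x)$, $\mathcal{J}(y)$ and $\Gamma_{xy}$ (which is just Taylor's theorem for $z\mapsto(D^kK*\Pi_y\tau)(z)$ combined with $\Pi_x\Gamma_{xy}=\Pi_y$ and admissibility), and so reduce everything to estimating, for each $\tau\in T_\zeta$ occurring in $f$, dyadic sums of convolutions of $K_n$ and its derivatives against $\Pi_y\bigl(f(y)-\Gamma_{yx}f(x)\bigr)$ and against $\CR f-\Pi_y f(y)$. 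One then splits the sum over $n$ into near scales $2^{-n}\lesssim|x-y|$ and far scales $2^{-n}\gtrsim|x-y|$: on near scales one uses $\|f(y)-\Gamma_{yx}f(x)\|_\zeta\lesssim|x-y|^{\gamma-\zeta}$ from $f\in\D^\gamma$, the model bound \eref{e:bounds}, and the reconstruction bound \eref{e:boundRf} with $\lambda\sim2^{-n}$; on far scales one Taylor-expands $D^kK_n$ and invokes the polynomial-annihilation property of $K_n$, with the same three inputs. Both geometric series in $n$ sum to $|x-y|^{\gamma+\beta-\delta}$, converging precisely when no intermediate homogeneity hits an integer --- this is where $\gamma+\beta\notin\N$ enters. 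This bookkeeping is the technical core, and it is parallel in spirit to the multi-scale computation already carried out in the proof of Theorem~\ref{theo:reconstruction}.

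Once $\CK f\in\D^{\gamma+\beta}$ with $\gamma+\beta>0$, the identity $\CR\CK f=K*\CR f$ follows from the \emph{uniqueness} clause of Theorem~\ref{theo:reconstruction}: it suffices to check $\bigl|\bigl(K*\CR f-\Pi_x\CK f(x)\bigr)(\psi_x^\lambda)\bigr|\lesssim\lambda^{\gamma+\beta}$. But $\Pi_x\bigl(\CI+\mathcal{J}(x)\bigr)f(x)=K*\Pi_x f(x)$ together with the definition of $\mathcal{N}$ shows that $K*\CR f-\Pi_x\CK f(x)$ equals $K*g$ minus its order-$(\gamma+\beta)$ Taylor polynomial at $x$, where $g=\CR f-\Pi_x f(x)$ vanishes to order $\gamma$ at $x$ --- i.e. exactly a Taylor remainder. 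Testing it against $\psi_x^\lambda$, decomposing $K=\sum_nK_n$, and running the same near/far split, now using only $|g(\phi_x^\lambda)|\lesssim\lambda^\gamma$, yields the $\lambda^{\gamma+\beta}$ decay; uniqueness in Theorem~\ref{theo:reconstruction} then pins down $\CR\CK f=K*\CR f$. As a sanity check, for the canonical model built from a genuine function, \eref{e:formulaRf} shows that $\CK f$ reconstructs to the classical convolution.

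The main obstacle is the multi-scale summation of the previous two paragraphs: one must track, scale by scale and homogeneity by homogeneity, the competition between the gain $2^{-n\beta}$ from $K_n$, the decay $|x-y|^{\gamma-\zeta}$ or $\lambda^\gamma$ of the object being convolved, and the loss $2^{n|k|}$ from differentiating the kernel, and check that every resulting geometric series converges --- which fails exactly at integer resonances, forcing the hypothesis $\gamma+\beta\notin\N$ together with its relatives. The only other genuinely non-routine point is algebraic: verifying that $\mathcal{J}$ and $\mathcal{N}$ were defined so that the model-dependent, non-polynomial pieces of $\CK f(x)-\Gamma_{xy}\CK f(y)$ really do cancel, leaving only $|x-y|^{\gamma+\beta-\delta}$-small errors.
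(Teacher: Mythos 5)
Your proposal matches the paper's route exactly: the three informal ingredients in the claim are formalised into the paper's Assumption~\ref{ass:poly} (polynomial sector), Assumption~\ref{ass:int} (abstract integration map $\CI$), Assumption~\ref{ass:kernel} (dyadic kernel decomposition), and the definition of admissible model via the map $\CJ(x)$; the operator $\CK = \CI + \CJ(x) + \CN$ you write down is verbatim the paper's \eref{e:defKf}--\eref{e:defN}, and your identification of the key algebraic point $\Pi_x(\CI + \CJ(x))\tau = K*\Pi_x\tau$ as the engine of all cancellations is precisely the role admissibility plays. The difference is only one of level of detail: the paper stops after this formalisation, checks $\CR\CK f = K*\CR f$ only in the continuous case via Remark~\ref{rem:continuousModel}, and defers the multi-scale near/far verification that $\CK f \in \D^{\gamma+\beta}$ to \cite{Regular}, whereas you sketch that verification (dyadic split at scale $|x-y|$, $K_n$ decay vs.\ $f\in\D^\gamma$ vs.\ reconstruction bound, geometric-series summation forcing $\gamma+\beta\notin\N$) and also supply the correct argument for the reconstruction identity in general, namely that $K*\CR f - \Pi_x\CK f(x)$ is the Taylor remainder of $K*(\CR f - \Pi_x f(x))$ and then invoke uniqueness in Theorem~\ref{theo:reconstruction}. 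All of this is consistent with the reference the paper cites; no gaps.
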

One way to implement these ingredients is to assume first that our model space $T$ 
contains abstract polynomials in the following sense.

\begin{assumption}\label{ass:poly}
There exists a sector $\bar T \subset T$ isomorphic to the space of abstract polynomials
in $d$ commuting variables. In other words, $\bar T_\alpha \neq 0$ if and only if $\alpha \in \N$, 
and one can find basis vectors $X^k$ of $T_{|k|}$ such that every element $\Gamma \in G$ acts
on $\bar T$ by $\Gamma X^k = (X-h)^k$ for some $h \in \R^d$.
\end{assumption}

Furthermore, we assume that there exists an abstract integration operator $\CI$ with the following properties.

\begin{assumption}\label{ass:int}
There exists a linear map $\CI \colon T \to T$ such that $\CI T_\alpha \subset T_{\alpha + \beta}$,
such that $\CI \bar T = 0$, and such that, for every $\Gamma \in G$ and $\tau \in T$, one has
\begin{equ}[e:propI]
\Gamma \CI \tau - \CI \Gamma \tau \in \bar T\;.
\end{equ}
\end{assumption}

Finally, we want to consider models that are compatible with this structure for a given kernel $K$.
For this, we first make precise what we mean exactly when we said that $K$ is approximately homogeneous of 
degree $\beta - d$. 

\begin{assumption}\label{ass:kernel}
One can write $K = \sum_{n \ge 0} K_n$ where each of the kernels $K_n\colon \R^d \to \R$ is
smooth and compactly supported in a ball of radius $2^{-n}$ around the origin. Furthermore, we assume that
for every multiindex $k$, one has a constant $C$ such that the bound
\begin{equ}[e:boundKn]
\sup_x |D^k K_n(x)| \le C 2^{n(d-\beta + |k|)}\;,
\end{equ}
holds uniformly in $n$. Finally, we assume that $\int K_n(x) P(x)\,dx = 0$ for every polynomial $P$
of degree at most $N$, for some sufficiently large value of $N$.
\end{assumption}

\begin{remark}
It turns out that in order to define the operator $\CK$ on $\D^\gamma$, we will need $K$ to
annihilate polynomials of degree $N$ for some $N \ge \gamma + \beta$.
\end{remark}

\begin{remark}
The last assumption may appear to be extremely stringent at first sight. In practice, this turns out
not to be a problem at all. Say for example that we want to define an operator that represents convolution
with $\CG$, the Green's function of the Laplacian. Then, $\CG$ can be decomposed into a sum of terms
satisfying the bound \eref{e:boundKn} with $\beta = 2$,
but it does of course not annihilate generic polynomials and it is not supported in the ball of radius $1$.

However, for any fixed value of $N>0$, it is straightforward to decompose $\CG$ as $\CG = K + R$, where the
kernel $K$ is compactly supported and satisfies all of the properties mentioned above, and the kernel $R$ is
smooth. Lifting the convolution with $R$ to an operator from $\D^\gamma \to \D^{\gamma + \beta}$
(actually to $\D^{\bar \gamma}$ for any $\bar \gamma > 0$) is straightforward, so that we have reduced our
problem to that of constructing an operator describing the convolution by $K$.
\end{remark}

Given such a kernel $K$, we can now make precise what we meant earlier when we said that the models
under consideration should be compatible with the kernel $K$.

\begin{definition}
Given a kernel $K$ as in Assumption~\ref{ass:kernel} and a regularity structure $\TT$ satisfying
Assumptions~\ref{ass:poly} and \ref{ass:int}, we say that a model $(\Pi,\Gamma)$ is \textit{admissible} if 
the identities
\begin{equ}[e:defAdmissible]
\bigl(\Pi_x X^k\bigr)(y) = (y-x)^k\;,\qquad
\Pi_x \CI \tau = K * \Pi_x \tau - \Pi_x \CJ(x) \tau\;,
\end{equ}
holds for every $\tau \in T$ with $|\tau| \le N$. Here, $\CJ(x) \colon T \to \bar T$ is the linear map given on 
homogeneous elements by
\begin{equ}[e:defJ]
\CJ(x)\tau = \sum_{|k| < |\tau| + \beta} {X^k\over k!} \int D^{(k)} K(x-y)\,\bigl(\Pi_x \tau\bigr)(dy)\;.
\end{equ}
\end{definition}

\begin{remark}
Note first that if $\tau \in \bar T$, then the definition given above is coherent as long as $|\tau| < N$. Indeed,
since $\CI \tau = 0$, one necessarily has $\Pi_x \CI \tau = 0$. On the other hand, the properties of $K$
ensure that in this case one also has $K * \Pi_x \tau = 0$, as well as $\CJ(x)\tau = 0$.
\end{remark}

\begin{remark}\label{rem:welldef}
While $K * \xi$ is well-defined for any distribution $\xi$, it is not so clear \textit{a priori} whether
the operator $\CJ(x)$ given in \eref{e:defJ} is also well-defined. It turns out that the axioms of a model
do ensure that this is the case. The correct way of interpreting \eref{e:defJ} is by  
\begin{equ}
\CJ(x)\tau = \sum_{|k| < |\tau| + \beta} \sum_{n \ge 0} {X^k\over k!} \bigl(\Pi_x \tau\bigr)\bigl(D^{(k)} K_n(x-\cdot)\bigr)\;.
\end{equ}
Note now that the scaling properties of the $K_n$ ensure that $2^{(\beta - |k|)n} D^{(k)} K_n(x-\cdot)$ is a 
test function that is localised around $x$ at scale $2^{-n}$. As a consequence, one has 
\begin{equ}
\bigl|\bigl(\Pi_x \tau\bigr)\bigl(D^{(k)} K_n(x-\cdot)\bigr)\bigr| \lesssim 2^{(|k| - \beta - |\tau|)n}\;,
\end{equ}
so that this expression is indeed summable as long as $|k| < |\tau| + \beta$. 
\end{remark}

\begin{remark}
As a matter of fact, it turns out that the above definition of an admissible model dovetails very nicely 
with our axioms defining a general model. Indeed, starting from \textit{any} regularity structure $\TT$, \textit{any}
model $(\Pi, \Gamma)$ for $\TT$, and a kernel
$K$ satisfying Assumption~\ref{ass:kernel}, it is usually possible to build a larger regularity structure $\hat \TT$
containing $\TT$ (in the ``obvious'' sense that $T \subset \hat T$ and the action of $\hat G$ on $T$ is compatible with 
that of $G$) and endowed with an abstract integration map $\CI$, as well as an admissible model $(\hat \Pi, \hat \Gamma)$ on 
$\hat \TT$ which reduces to $(\Pi,\Gamma)$ when restricted to $T$. 
See \cite{Regular} for more details.

The only exception to this rule arises
when the original structure $T$ contains some homogeneous element $\tau$ which does not represent a polynomial
and which is such that $|\tau| + \beta \in \N$. Since the bounds appearing both in the definition of a model and
in Assumption~\ref{ass:kernel} are only upper bounds, it is in practice easy to exclude such a situation by slightly
tweaking the definition of either the exponent $\beta$ or of the original regularity structure $\TT$. 
\end{remark}

With all of these definitions in place, we can finally build the operator $\CK \colon \D^\gamma\to \D^{\gamma + \beta}$
announced at the beginning of this section. Recalling the definition of $\CJ$ from \eref{e:defJ}, we set
\begin{equ}[e:defKf]
\bigl(\CK f\bigr)(x) = \CI f(x) + \CJ(x) f(x) + \bigl(\CN f\bigr)(x)\;,
\end{equ}
where the operator $\CN$ is given by
\begin{equ}[e:defN]
\bigl(\CN f\bigr)(x) = 
\sum_{|k| < \gamma + \beta} {X^k\over k!} \int D^{(k)} K(x-y)\,\bigl(\CR f - \Pi_x f(x)\bigr)(dy)\;.
\end{equ}
Note first that thanks to the reconstruction theorem, it is possible to verify that the right hand side of 
\eref{e:defN} does indeed make sense for every $f \in \D^\gamma$ in virtually the same way as in Remark~\ref{rem:welldef}.
One has:

\begin{theorem}
Let $K$ be a kernel satisfying Assumption~\ref{ass:kernel}, let $\TT = (A,T,G)$ be a regularity structure
satisfying Assumptions~\ref{ass:poly} and \ref{ass:int}, and let $(\Pi,\Gamma)$ be an admissible model for $\TT$.
Then, for every $f \in \D^\gamma$ with $\gamma \in (0,N-\beta)$ and $\gamma + \beta \not \in \N$, 
the function $\CK f$ defined in \eref{e:defKf} belongs to $\D^{\gamma + \beta}$ and
satisfies $\CR \CK f = K * \CR f$.
\end{theorem}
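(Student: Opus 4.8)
The plan is to verify the two assertions separately: first that $\CK f \in \D^{\gamma+\beta}$, and then that $\CR\CK f = K * \CR f$.

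\medskip

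\textbf{Step 1: membership in $\D^{\gamma+\beta}$.} First I would record that $\CK f$ takes values in $\bigoplus_{\delta < \gamma+\beta} T_\delta$: the term $\CI f(x)$ lives in homogeneities shifted up by $\beta$, and both $\CJ(x)f(x)$ and $(\CN f)(x)$ are polynomial-valued, with the sums in \eref{e:defJ} and \eref{e:defN} truncated at $|k| < \gamma+\beta$. The heart of the matter is the bound
\begin{equ}
\|\CK f(x) - \Gamma_{xy}\CK f(y)\|_\delta \lesssim |x-y|^{\gamma+\beta-\delta}\;,
\end{equ}
which must be checked separately on the ``non-polynomial'' components (those coming from $\CI f(x)$) and on the polynomial components (homogeneities in $\N$). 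For the non-polynomial part one uses the identity $\Gamma_{xy}\CI = \CI\Gamma_{xy} + (\text{something in }\bar T)$ from \eref{e:propI}, so that modulo polynomials $\CK f(x) - \Gamma_{xy}\CK f(y) \approx \CI(f(x) - \Gamma_{xy}f(y))$, and then $\|f(x)-\Gamma_{xy}f(y)\|_\alpha \lesssim |x-y|^{\gamma-\alpha}$ from $f \in \D^\gamma$ combined with $\CI T_\alpha \subset T_{\alpha+\beta}$ gives exactly the exponent $\gamma+\beta-\delta$. For the polynomial components, one has to combine $\CJ$, $\CN$ and the $\bar T$-valued correction from $\Gamma_{xy}\CI$; the key computation is to show that the prefactor of $X^k/k!$ in $\CK f(x) - \Gamma_{xy}\CK f(y)$ equals (up to the truncation) $\int D^{(k)}K(y - z)\,(\CR f - \Pi_y f(y))(dz)$ evaluated against the difference of the two expansion points, which is estimated by splitting the kernel into the pieces $K_n$ and summing the scaling bounds $2^{(|k|-\beta-\delta)n}$ against $|x-y|^{\gamma-\delta'}$-type factors from the reconstruction theorem, exactly as in Remark~\ref{rem:welldef}. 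The sum over $n$ splits at the scale $2^{-n} \sim |x-y|$ and both halves produce $|x-y|^{\gamma+\beta-|k|}$; the requirement $\gamma+\beta \notin \N$ is what guarantees no borderline logarithmically-divergent term appears.

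\medskip

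\textbf{Step 2: the reconstruction identity.} Here I would use the defining property \eref{e:boundRf} of $\CR$ together with uniqueness. Since $K * \CR f$ is a fixed distribution, it suffices to show that $g := \CK f$ satisfies $\bigl|\bigl(K*\CR f - \Pi_x g(x)\bigr)(\phi_x^\lambda)\bigr| \lesssim \lambda^{\gamma+\beta}$, because then $K * \CR f$ plays the role of $\CR g$ and uniqueness in Theorem~\ref{theo:reconstruction} forces $\CR\CK f = K * \CR f$. Using the admissibility relation \eref{e:defAdmissible}, $\Pi_x\CI f(x) = K * \Pi_x f(x) - \Pi_x\CJ(x)f(x)$, and the fact that $\Pi_x$ acts as the polynomial model on $\bar T$, one computes
\begin{equ}
\Pi_x g(x) = K * \Pi_x f(x) + \bigl(\text{Taylor polynomial of }K*(\CR f - \Pi_x f(x))\text{ at }x\text{ of degree }<\gamma+\beta\bigr)\;.
\end{equ}
Hence $K*\CR f - \Pi_x g(x) = K*(\CR f - \Pi_x f(x)) - (\text{its truncated Taylor expansion at }x)$, and one tests this against $\phi_x^\lambda$, decomposing $K = \sum_n K_n$. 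For $2^{-n} \lesssim \lambda$ one uses the reconstruction bound $|(\CR f - \Pi_x f(x))(\psi_x^\mu)| \lesssim \mu^\gamma$ directly; for $2^{-n} \gtrsim \lambda$ one uses the Taylor-remainder cancellation (the polynomial subtraction kills the first $\lceil\gamma+\beta\rceil - 1$ orders), gaining a factor $\lambda^{\gamma+\beta}$ after summation. This is essentially the same estimate as in the proof of the reconstruction theorem.

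\medskip

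\textbf{Main obstacle.} I expect the bookkeeping in Step 1 for the polynomial components to be the real work: one must carefully track how the three pieces $\CI f$, $\CJ(x)f(x)$, $(\CN f)(x)$ recombine under $\Gamma_{xy}$, exploiting that $\CJ$ and $\CN$ were defined precisely so that $\CK f(x) - \Gamma_{xy}\CK f(y)$ has, in each homogeneity $|k| \in \N$, the clean integral representation against $D^{(k)}K$ of the ``remainder'' $\CR f - \Pi_y f(y)$ — together with controlling the non-absolutely-convergent-looking sums over the kernel scales $n$, which is where both the hypothesis $\gamma + \beta \notin \N$ and the polynomial-annihilation property of $K$ (Assumption~\ref{ass:kernel}) enter crucially. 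The analytic estimates themselves are all of the type already carried out in the proof of Theorem~\ref{theo:reconstruction} and in Remark~\ref{rem:welldef}; the subtlety is organising the algebra so that those estimates apply.
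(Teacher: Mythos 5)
Your proposal is correct in outline, but it takes a considerably more complete and genuinely different route from what the paper actually presents. The paper's proof is deliberately truncated: it defers the $\D^{\gamma+\beta}$ membership and the general case entirely to \cite{Regular}, and verifies $\CR\CK f = K*\CR f$ \emph{only} for models of continuous functions, by evaluating pointwise via the formula $(\CR g)(x) = (\Pi_x g(x))(x)$ of Remark~\ref{rem:continuousModel} and watching the term $(K*\Pi_x f(x))(x)$ cancel between the admissibility relation and the $k=0$ term of $\CN$. Your Step~2 instead invokes the \emph{uniqueness} clause of the reconstruction theorem: you show $\Pi_x\CK f(x) = K*\Pi_x f(x) + (\text{truncated Taylor expansion of }K*(\CR f - \Pi_x f(x))\text{ at }x)$, so that $K*\CR f - \Pi_x\CK f(x)$ is a truncated Taylor remainder, and then test against $\phi_x^\lambda$ with the scale decomposition $K=\sum_n K_n$. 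This is the right argument for general distributional models (and is indeed the strategy of the full proof in \cite{Regular}), whereas the paper's shortcut is only illustrative; what the paper's route buys is that the cancellation structure built into \eref{e:defKf} is visible in one line. Your Step~1 — splitting $\CK f(x) - \Gamma_{xy}\CK f(y)$ into the $\CI$-part (controlled via \eref{e:propI} and the $\D^\gamma$-bound on $f$) and the polynomial part (recombining $\CJ$, $\CN$, and the $\bar T$-valued correction, then scale-decomposing $K$ and splitting the sum at $2^{-n}\sim|x-y|$) — has no counterpart in the paper at all, but correctly identifies the organisation of the real proof and where $\gamma+\beta\notin\N$ and the polynomial-annihilation property of $K$ enter. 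One imprecision worth flagging: the phrase ``$\int D^{(k)}K(y-z)(\CR f - \Pi_y f(y))(dz)$ evaluated against the difference of the two expansion points'' is too vague to be checkable as stated; the actual object is a Taylor-remainder-type quantity comparing the order-$k$ coefficient based at $x$ with the $\Gamma_{xy}$-shifted combination of order-$(k+\ell)$ coefficients based at $y$, and getting this recombination exactly right is where the bulk of the bookkeeping in \cite{Regular} lives — but the estimate you then propose (scale-splitting, reconstruction bound, $2^{(|k|-\beta-\delta)n}$ scaling) is the correct way to bound it once the algebra is in place.
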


\begin{proof}
The complete proof of this result can be found in \cite{Regular} and will not be given here. Let us simply show that one has
indeed $\CR \CK f = K * \CR f$ in the particular case when our model consists of continuous functions so
that Remark~\ref{rem:continuousModel} applies. In this case, one has
\begin{equ}
\bigl(\CR \CK f\bigr)(x) = \bigl(\Pi_x (\CI f(x) + \CJ(x) f(x))\bigr)(x) + \bigl(\Pi_x\bigl(\CN f\bigr)(x)\bigr)(x)\;.
\end{equ}
As a consequence of \eref{e:defAdmissible}, the first term appearing in the right hand side of this expression is given by
\begin{equ}
\bigl(\Pi_x (\CI f(x) + \CJ(x) f(x))\bigr)(x) = \bigl(K * \Pi_x f(x)\bigr)(x)\;.
\end{equ}
On the other hand, the only term contributing to the second term is the one with $k = 0$ (which is always present
since $\gamma > 0$ by assumption) which then yields
\begin{equ}
\bigl(\Pi_x\bigl(\CN f\bigr)(x)\bigr)(x) = \int K(x-y)\,\bigl(\CR f - \Pi_x f(x)\bigr)(dy)\;.
\end{equ}
Adding both of these terms, we see that the expression $\bigl(K * \Pi_x f(x)\bigr)(x)$ cancels, leaving us with the desired result.
\end{proof}

\section{Application of the theory to semilinear SPDEs}

Let us now briefly explain how this theory can be used to make sense of solutions to 
very singular semilinear stochastic PDEs. We will keep the discussion in this section at a very informal level
without attempting to make mathematically precise statements. The interested reader may find 
more details in \cite{Regular}.

For definiteness, we focus on the case of the dynamical
$\Phi^4_3$ model, which is formally given by
\begin{equ}[e:Phi43]
\d_t \Phi = \Delta \Phi - \Phi^3 + \xi\;,
\end{equ}
where $\xi$ denotes space-time white noise and the spatial variable takes values
in the $3$-dimen\-sio\-nal torus. The problem with such an equation is that even the solution to the 
linear part of the equation, namely 
\begin{equ}
\d_t \Psi = \Delta \Psi + \xi\;,
\end{equ}
only admits solutions in some spaces of Schwartz distributions. As a matter of fact, one has $\Psi(t,\cdot) \in \CC^{-\alpha}$
if and only if $\alpha > 1/2$.
As a consequence, it turns out that the only way of giving meaning to \eref{e:Phi43} is to ``renormalise'' the equation
by adding an ``infinite'' linear term ``$\infty \Phi$'' which counteracts the strong dissipativity of the term $-\Phi^3$.
To be slightly more precise, one can prove a statement of the following kind: 

\begin{theorem}\label{theo:Phi4}
Consider the sequence of equations
\begin{equ}[e:Phi43Regular]
\d_t \Phi_\eps = \Delta \Phi_\eps + C_\eps \Phi_\eps - \Phi_\eps^3 + \xi_\eps \;,
\end{equ}
where $\xi_\eps = \delta_\eps * \xi$ with $\delta_\eps(t,x) = \eps^{-5} \rho(\eps^{-2}t, \eps^{-1}x)$, for some
smooth and compactly supported function $\rho$, and $\xi$ denotes space-time white noise. 
Then, there exists a choice of constants $C_\eps$ such that the sequence $\Phi_\eps$ converges in probability
to a limiting (distributional) process $\Phi$. Furthermore, the limiting process $\Phi$ does \textit{not} depend
on the choice of mollifier $\rho$.
\end{theorem}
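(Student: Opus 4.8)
The plan is to implement the three-step paradigm that the theory of regularity structures provides: (i) build a regularity structure rich enough to formulate a fixed point problem for (a lift of) \eref{e:Phi43Regular}; (ii) show that the smooth noise $\xi_\eps$ canonically lifts to an admissible model $(\Pi^\eps,\Gamma^\eps)$, and that there is a renormalisation group acting on admissible models so that the renormalised model $\hat\Pi^\eps = M_\eps \Pi^\eps$ (for a suitable sequence $M_\eps$) converges in probability to a limiting model $\hat\Pi$; (iii) solve the abstract fixed point problem in $\D^\gamma$ using the abstract integration operator $\CK$ from Section~\ref{sec:Schauder}, the multiplication theorem (Theorem~\ref{theo:mult}) for the cube $\Phi^3$, and the reconstruction operator (Theorem~\ref{theo:reconstruction}), and then transfer the continuity of the solution map $(\text{model}) \mapsto (\text{solution})$ through $\CR$ to conclude convergence of $\Phi_\eps = \CR U_\eps$. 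The independence of the mollifier then follows because the limiting \emph{renormalised} model $\hat\Pi$ turns out not to depend on $\rho$, only on $\xi$.

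Concretely, first I would fix the scaling $\s = (2,1,1,1)$ on $\R^4$ so that the heat kernel $K$ (the compactly supported truncation of the Green's function of $\d_t - \Delta$) satisfies Assumption~\ref{ass:kernel} with $\beta = 2$; since $\xi \in \CC^{-\alpha}_\s$ for any $\alpha > 5/2$, the relevant ``negative'' symbols are those obtained from the noise symbol $\Xi$ (with $|\Xi| = -5/2-\kappa$) by repeated application of $\CI$ and multiplication, keeping only those of homogeneity below a cutoff slightly above $0$. This produces a finite list of trees --- $\Xi$, $\CI(\Xi)$, $\CI(\Xi)^2$, $\CI(\Xi)^3$, $\CI(\CI(\Xi)^2)\Xi$, etc.~--- together with the polynomial sector $\bar T$ from Assumption~\ref{ass:poly}; the structure group $G$ is generated by the positive-renormalisation (re-expansion) maps. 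The equation is then rewritten, using the mild formulation, as the abstract fixed point problem $U = \CK(\mathbf{R}^+\Xi - U^{\star 3}) + (\text{harmonic part})$, posed in $\D^{\gamma}(V)$ for $\gamma$ slightly above $1$, with $V$ function-like; Theorem~\ref{theo:mult} controls the regularity of $U^{\star 3}$, the Schauder theorem gains back $\beta = 2$, and a short-time contraction argument (using that $\CK$ gains a small power of the time horizon) gives local existence and uniqueness and, crucially, continuity of $U$ with respect to the underlying admissible model.

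The second ingredient is the stochastic one: one defines the canonical lift of the smooth mollified noise $\xi_\eps$ --- for each tree $\tau$, $\Pi^\eps_x\tau$ is just the corresponding iterated space-time convolution/product of $\xi_\eps$ --- and then introduces the renormalisation maps. For $\Phi^4_3$ one needs two divergent constants, absorbed into a two-parameter group element $M_\eps$ acting on the model, which on the level of the equation manifests exactly as the counterterm $C_\eps\Phi_\eps$ (the two divergences combine, one of them a ``mass'' and one lower-order, into a single effective constant by the time one projects back via $\CR$). Using Wick calculus / hypercontractivity in the fixed Wiener chaoses generated by $\xi$, together with the Kolmogorov-type criterion for convergence of models, one shows that $\hat\Pi^\eps = M_\eps\Pi^\eps$ is Cauchy in probability in the space of admissible models, with a limit $\hat\Pi$ whose law depends only on the white noise $\xi$ and not on $\rho$ (the $\rho$-dependence is entirely carried by the divergent part $M_\eps$, which is subtracted off). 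Feeding $\hat\Pi^\eps$ into the continuous solution map and using that $\CR\hat U_\eps$ solves exactly \eref{e:Phi43Regular} with the claimed $C_\eps$ gives $\Phi_\eps = \CR\hat U_\eps \to \CR\hat U =: \Phi$ in probability, with $\Phi$ independent of $\rho$.

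The main obstacle is unambiguously the stochastic estimates of step (ii): proving that the renormalised model converges requires, for each of the finitely many trees $\tau$ (in particular the most singular ones such as $\CI(\CI(\Xi)^2)\,\Xi$ and $\CI(\Xi)^3$), uniform-in-$\eps$ moment bounds $\E|(\hat\Pi^\eps_x\tau)(\phi^\lambda_x)|^{2p} \lesssim \lambda^{2p(|\tau|+\kappa)}$ and the analogous Cauchy estimates between $\eps$ and $\eps'$. Carrying this out means expanding each object into its Wiener chaos decomposition, representing every summand as an integral against a product of heat kernels (a ``Feynman diagram''), and then extracting the right power of $\lambda$ by a careful power-counting / integration-by-parts analysis of the singularities of these kernels at coinciding points --- it is precisely here that the subtractions encoded in $M_\eps$ are needed to kill the logarithmically and polynomially divergent subdiagrams. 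Everything else --- the algebraic construction of $\TT$, the abstract fixed point, the passage from the reconstructed solution to the PDE, and the mollifier-independence --- is by comparison routine once the machinery of Sections \ref{sec:Schauder} and the reconstruction theorem is in hand; the full details are in \cite{Regular}.
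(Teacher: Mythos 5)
Your proposal follows exactly the strategy the paper sketches in Sections 6 and 7: build the regularity structure from $\Xi$ (homogeneity $-\tfrac52-\kappa$ under parabolic scaling) together with the polynomial sector by closing under $\CI$ and the cubic product, pose the mild formulation as a fixed point in a weighted $\D^\gamma$-space via $\CK$, Theorem~\ref{theo:mult}, and Theorem~\ref{theo:reconstruction}, lift $\xi_\eps$ canonically to an admissible model, renormalise with a two-parameter subgroup of $\RR$ whose action on the equation produces the counterterm $(3C_1 - 9C_2)\Phi$, and prove convergence of the renormalised models via Wiener chaos / Wick calculus estimates on the finitely many trees of negative homogeneity, with mollifier-independence read off from the limiting renormalised model. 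This is essentially the same approach, and the identification of the stochastic model estimates as the genuinely hard step is correct.

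One small slip worth flagging: the tree $\CI(\CI(\Xi)^2)\,\Xi$ that you cite as one of the ``most singular'' symbols does not belong to this regularity structure. Because the right-hand side of the equation is $\Xi - \Phi^3$, the noise symbol $\Xi$ only ever appears standing alone and is never multiplied by $\CI(\cdot)$-type expressions; the collection $\CW$ built in the paper consists of $\Xi$ together with triple products of elements of $\CU$, and $\Xi \notin \CU$. The trees that actually drive the need for both renormalisation constants are $\CI(\Xi)^2$ (the $1/\eps$ divergence, giving $C_1$) and the two ``cherry'' trees $\CI(\Xi)^2\,\CI\bigl(\CI(\Xi)^2\bigr)$ and $\CI(\Xi)^2\,\CI\bigl(\CI(\Xi)^3\bigr)$ (the logarithmic divergence, giving $C_2$). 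This does not affect the validity of your overall argument, but the Feynman-diagram bookkeeping in step~(ii) has to be done for the correct list of symbols.
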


\begin{remark}
It turns out that in order to obtain a limit that is independent of the choice of mollifier $\rho$, 
one should take $C_\eps$ of the form
\begin{equ}
C_\eps = {c_1 \over \eps} + \tilde c \log \eps + c_3\;,
\end{equ}
where $\tilde c$ is universal, but $c_1$ and $c_3$ depend on the choice of $\rho$.
\end{remark}

\begin{remark}
The limiting solution $\Phi$ is only local in time, so that the precise statement has to
be slightly tweaked to allow for finite-time blow-ups. Regarding the initial condition,
one can take $\Phi_0 \in \CC^{-\beta}$ for any $\beta < 2/3$. This is expected to be
optimal, even for the deterministic equation. 
\end{remark}

The aim of this section is to sketch how the theory of regularity structures can be used to obtain
this kind of convergence results. First of all, we note that while our solution $\Phi$ will be 
a space-time distribution (or rather an element of $\D^\gamma$ for some regularity structure with 
a model over $\R^4$), the ``time'' direction has a different scaling behaviour from the three ``space''
directions. As a consequence, it turns out to be effective to slightly change our definition of
``localised test functions'' by setting
\begin{equ}
\phi_{(s,x)}^\lambda (t,y) = \lambda^{-5} \phi\bigl(\lambda^{-2}(t-s), \lambda^{-1}(y-x)\bigr)\;.
\end{equ}
Accordingly, the ``effective dimension'' of our space-time is actually $5$, rather than $4$.
The theory presented above extends \textit{mutatis mutandis} to this setting. (Note in particular
that when considering the degree of a regular monomial, powers of the time variable should now be 
counted double.) Note also that with this way of measuring regularity, space-time white noise
belongs to $\CC^{-\alpha}$ for every $\alpha > {5\over 2}$. This is because of the bound
\begin{equ}
\bigl(\E \scal{\xi, \phi_x^\lambda}^2\bigr)^{1/2} = \|\phi_x^\lambda\|_{L^2} \approx \lambda^{-{5\over 2}}\;,
\end{equ}
combined with an argument similar to the proof of Kolmogorov's continuity lemma.

\subsection{Construction of the associated regularity structure}

Our first step is to build a regularity structure that is sufficiently large to allow to 
reformulate \eref{e:Phi43} as a fixed point in $\D^\gamma$ for some $\gamma > 0$. 
Denoting by $\CG$ the heat kernel (i.e.\ the Green's function of the operator $\d_t - \Delta$), we 
can write the solution to \eref{e:Phi43} with initial condition $\Phi_0$ as
\begin{equ}
\Phi = \CG * \bigl(\xi - \Phi^3\bigr) + \CG \Phi_0\;,
\end{equ}
where $*$ denotes space-time convolution and where we denote by $\CG \Phi_0$ 
the harmonic extension of $\Phi_0$. In order to have a chance of fitting 
this into the framework described above, we first decompose the heat kernel $\CG$ as
\begin{equ}
\CG = K + \hat K\;,
\end{equ}
where the kernel $K$ satisfies all of the assumptions of Section~\ref{sec:Schauder} (with $\beta = 2$) and
the remainder $\hat K$ is smooth. If we consider any regularity structure containing the usual
Taylor polynomials and equipped with an admissible model, is straightforward to associate to $\hat K$ an
operator $\hat \CK \colon \D^\gamma \to \D^\infty$ via
\begin{equ}
\bigl(\hat \CK f\bigr)(z) = \sum_{k} {X^k \over k!} \bigl(D^{(k)}\hat K * \CR f\bigr)(z)\;,
\end{equ}
where $z$ denotes a space-time point and $k$ runs over all possible $4$-dimensional multiindices.
Similarly, the harmonic extension of $\Phi_0$ can be lifted to an element in $\D^\infty$
which we denote again by $\CG \Phi_0$ by
considering its Taylor expansion around every space-time point. At this stage, we note that we actually
cheated a little: while $\CG \Phi_0$ is smooth in $\{(t,x)\,:\, t > 0, x\in \T^3\}$ and vanishes when $t < 0$,
it is of course singular on the time-$0$ hyperplane $\{(0,x)\,:\, x\in \T^3\}$. This problem can be cured
by introducing weighted versions of the spaces $\D^\gamma$ allowing for singularities on
a given hyperplane. A precise definition of these spaces and their behaviour under multiplication and
the action of the integral operator $\CK$ can be found in \cite{Regular}. For the purpose of the informal
discussion given here, we will simply ignore this problem.

This suggests that the ``abstract'' formulation of \eref{e:Phi43} should be given by
\begin{equ}[e:abstractFull]
\Phi = \CK \bigl(\Xi - \Phi^3\bigr) + \hat \CK \bigl(\Xi - \Phi^3\bigr) + \CG \Phi_0\;.
\end{equ}
In view of \eref{e:defKf}, this equation is of the type
\begin{equ}[e:abstract]
\Phi = \CI \bigl(\Xi - \Phi^3\bigr) + (\ldots)\;,
\end{equ}
where the terms $(\ldots)$ consist of functions that take values in the subspace $\bar T$
of $T$ spanned by regular Taylor polynomials. 
In order to build a regularity structure in which \eref{e:abstract} can be formulated, 
it is natural to start with the structure given by abstract polynomials (again with the parabolic scaling
which causes the abstract ``time'' variable to have homogeneity $2$ rather than $1$), and to add
a symbol $\Xi$ to it which we postulate to have homogeneity $-{5\over 2}^{-}$, where we denote by $\alpha^-$ an
exponent strictly smaller than, but arbitrarily close to, the value $\alpha$.

We then simply add to $T$ all of the formal expressions that an application of the right hand side of \eref{e:abstract}
can generate for the description of $\Phi$, $\Phi^2$, and $\Phi^3$. The homogeneity of a given expression is 
completely determined by the rules $|\CI \tau| = |\tau| + 2$ and $|\tau \bar \tau| = |\tau| + |\tau|$.
More precisely, we consider a collection $\CU$ of formal expressions which is the
smallest collection containing $\one$, $X$, and $\CI(\Xi)$, and such that 
\begin{equ}
\tau_1,\tau_2,\tau_3 \in \CU \quad\Rightarrow\quad \CI(\tau_1\tau_2\tau_3) \in \CU\;,
\end{equ}
where it is understood that $\CI(X^k) = 0$ for every multiindex $k$.
We then set 
\begin{equ}
\CW = \{\Xi\} \cup \{\tau_1\tau_2\tau_3\,:\, \tau_i \in \CU\}\;,
\end{equ}
and we define our space $T$ as the set of all linear combinations of elements in $\CW$. 
(Note that since $\one \in \CU$, one does in particular have $\CU \subset \CW$.)
Naturally, $\CT_\alpha$ consists of those linear combinations that only involve
elements in $\CW$ that are of homogeneity $\alpha$. 
It is not too difficult to convince oneself that, 
for every $\alpha \in \R$, $\CW$ contains only
finitely many elements of homogeneity less than $\alpha$, so that each $\CT_\alpha$
is finite-dimensional.

In order to simplify expressions later, we will use the following shorthand graphical 
notation for elements of $\CW$. For $\Xi$, we simply draw a dot.
The integration map is then represented by a downfacing line and the multiplication of 
symbols is obtained by joining them at the root. For example, we have
\begin{equ}
\CI(\Xi) = \<1>\;,\quad
\CI(\Xi)^3 = \<3>\;,\quad
\CI(\Xi)\CI(\CI(\Xi)^3) = \<31>\;.
\end{equ}
Symbols containing factors of $X$ have no particular graphical representation, so we
will for example write $X_i \CI(\Xi)^2 = X_i\<2>$. With this notation, the space $T$ is 
given by
\begin{equs}
T &= \langle \Xi, \<3>, \<2>, \<32>,\<1>, \<31>, \<22>, X_i \<2>, \one, \<30>, \<21>, \ldots\rangle\;,
\end{equs}
where we ordered symbols in increasing order of homogeneity and used $\scal{\cdot}$ to denote
the linear span.
Given any sufficiently regular function $\xi$ (say a continuous space-time function), there is then
a canonical way of lifting $\xi$ to a model $\iota \xi = (\Pi,\Gamma)$ for $T$ by setting
\begin{equ}
\bigl(\Pi_x \Xi\bigr)(y) = \xi(y)\;,\qquad \bigl(\Pi_x X^k\bigr)(y) = (y-x)^k\;,
\end{equ}
and then recursively by
\begin{equ}[e:product]
\bigl(\Pi_x \tau \bar \tau\bigr)(y) = \bigl(\Pi_x \tau\bigr)(y)\cdot \bigl(\Pi_x \bar \tau\bigr)(y)\;,
\end{equ}
as well as \eref{e:defAdmissible}. (Note that here we used $x$ and $y$ as notations for generic
space-time points in order not to overload the notations.)

It turns out furthermore that there is a canonical way of building a structure group $G$ for $T$ and
to also lift $\xi$ to a family of operators $\Gamma_{xy}$, in such a way that 
all of the algebraic and analytic 
properties of an admissible model are satisfied. 
With such a model $\iota \xi$ at hand, it follows from \eref{e:product}
and the admissibility of $\iota \xi$ that the associated reconstruction operator satisfies the properties
\begin{equ}
\CR \CK f = K * \CR f \;,\qquad \CR (fg) = \CR f \cdot \CR g\;,
\end{equ} 
as long as all the functions to which $\CR$ is applied belong to $\D^\gamma$ for some $\gamma > 0$.
As a consequence, applying the reconstruction operator $\CR$ to both sides of  \eref{e:abstractFull},
we see that if $\Phi$ solves \eref{e:abstractFull} then, provided that the model
$(\Pi,\Gamma) = \iota\xi$ was built as above starting from any \textit{continuous} realisation $\xi$ of the driving noise, 
$\CR \Phi$ solves the equation \eref{e:Phi43}.

At this stage, the situation is as follows. For any \textit{continuous} realisation $\xi$ of the driving noise,
we have factored the solution map $(\Phi_0,\xi) \to \Phi$ associated to \eref{e:Phi43} into maps
\begin{equ}
(\Phi_0,\xi) \to (\Phi_0,\iota \xi) \to \Phi \to \CR \Phi \;,
\end{equ}
where the middle arrow corresponds to the solution to \eref{e:abstractFull} in some weighted $\D^\gamma$-space.
The advantage of such a factorisation is that the last two arrows yield \textit{continuous} maps, even in topologies
sufficiently weak to be able to describe driving noise having the lack of regularity of space-time white noise.
The only arrow that isn't continuous in such a weak topology is the first one. At this stage, it should be
believable that a similar construction can be performed for a very large class of semilinear stochastic PDEs.
In particular, the KPZ equation can also be analysed in this framework. 

Given this construction, one is lead naturally to the following 
question: given a sequence $\xi_\eps$ of ``natural'' regularisations
of space-time white noise, for example as in \eref{e:Phi43Regular}, do the lifts $\iota \xi_\eps$ converge in 
probably in a suitable space of admissible models? Unfortunately, unlike in the case of the theory of rough paths
where this is very often the case, 
the answer to this question in the context of SPDEs is often an emphatic \textbf{no}.
Indeed, if it were the case for the dynamical $\Phi^4_3$ model,
 then one could have chosen the constant $C_\eps$ to be independent of $\eps$ in 
\eref{e:Phi43Regular}, which is certainly not the case. 

\section{Renormalisation of the dynamical $\Phi^4_3$ model}

One way of circumventing the fact that $\iota\xi_\eps$ does not converge to a limiting model
as $\eps \to 0$ is to consider instead a sequence of \textit{renormalised} models. The main idea
is to exploit the fact that our abstract definitions of a model do not impose the identity
\eref{e:product}, even in situations where $\xi$ itself happens to be a continuous function. 
One question that then imposes itself is: what are the natural ways of ``deforming'' the usual
product which still lead to lifts to an admissible model? It turns out that the regularity structure
whose construction was sketched above comes equipped with a natural \textit{finite-dimensional} 
group of continuous transformations $\RR$ on its space of admissible models (henceforth called the 
``renormalisation group''), which essentially amounts
to the space of all natural deformations of the product. It then turns out that even though 
$\iota\xi_\eps$ does not converge, it is possible to find a sequence $M_\eps$ of elements in $\RR$ such that 
the sequence $M_\eps \iota \xi_\eps$ converges to a limiting model $(\hat \Pi, \hat \Gamma)$.
Unfortunately, the elements $M_\eps$ no \textit{not} preserve the image of $\iota$ in the space of
admissible models. As a consequence, when solving the fixed point map \eref{e:abstractFull} 
with respect to the model $M_\eps \iota \xi_\eps$ and inserting the solution into the reconstruction operator,
it is not clear \textit{a priori} that the resultong function (or distribution) can again be interpreted
as the solution to some modified PDE. It turns out that in our case, at least for a certain two-parameter subgroup of $\RR$,
this is again the case and the modified equation is precisely given by \eref{e:Phi43Regular}, where $C_\eps$ is 
some linear combination of the two constants appearing in the description of $M_\eps$.

There are now three questions that remain to be answered:
\begin{enumerate}
\item How does one construct the renormalisation group $\RR$?
\item How does one derive the new equation obtained when renormalising a model?
\item What is the right choice of $M_\eps$ ensuring that the renormalised models converge?
\end{enumerate}

\subsection{The renormalisation group}

In order to construct $\RR$, 
it is essential to first have some additional knowledge of the structure group $G$ for the type
of regularity structures considered above. 
Recall that the purpose of the group $G$ is to provide a class of linear maps $\Gamma \colon T \to T$
arising as possible candidates for the action of ``reexpanding'' a ``Taylor series'' around a different point.
In our case, in view of \eref{e:defAdmissible}, the coefficients of these reexpansions will naturally be
some polynomials in $x$ and in the expressions appearing in \eref{e:defJ}. This suggests that we should
define a space $T^+$ whose basis vectors consist of formal expressions of the type
\begin{equ}[e:genPolynom]
X^k \prod_{i=1}^N \CJ_{\ell_i}\tau_i\;,
\end{equ}
where $N$ is an arbitrary but finite number, the $\tau_i$ are basis elements of $T$,
and the $\ell_i$ are $d$-dimensional multiindices satisfying $|\ell_i| < |\tau_i| + 2$. 
(The last bound is a reflection of the restriction of the summands in \eref{e:defJ} with $\beta = 2$.)
The space $T^+$ also has a natural graded structure $T^+ = \bigoplus T^+_\alpha$ by setting
\begin{equ}
|\CJ_{\ell}\tau| = |\tau| + 2 - |\ell|\;,\qquad |X^k| = |k|\;,
\end{equ}
and by postulating that the degree of a product is the sum of the degrees. Unlike in the case of $T$ however,
elements of $T^+$ all have strictly positive homogeneity, except for the empty product $\one$ which we postulate
to have degree $0$.

To any given admissible model $(\Pi,\Gamma)$, it is then natural to associate linear maps
$f_x \colon T^+ \to \R$ by $f_x (X^k) = x^k$, $f_x(\sigma \bar \sigma) = f_x(\sigma) f_x(\bar \sigma)$, and
\begin{equ}[e:deffx]
f_x (\CJ_{\ell_i}\tau_i) = \int D^{(\ell_i)} K(x-y)\, \bigl(\Pi_x \tau_i\bigr)(dy)\;.
\end{equ}
It then turns out that it is possible to build a linear map $\Delta \colon T \to T \otimes T^+$ such that
if we define $F_x \colon T \to T$ by
\begin{equ}[e:defineaction]
F_x \tau = (I \otimes f_x)\Delta \tau\;,
\end{equ}
where $I$ denotes the identity operator on $T$, then these maps are invertible and
$\Pi_x F_x^{-1}$ is independent of $x$. Furthermore, there exists a map $\Delta^+ \colon T^+ \to T^+\otimes T^+$
such that 
\begin{equ}[e:propDelta]
(\Delta \otimes I)\Delta = (I \otimes \Delta^+)\Delta\;,\qquad \Delta^+(\sigma \bar \sigma) = \Delta^+ \sigma\cdot \Delta^+ \bar \sigma\;.
\end{equ}
With this map at hand, we can define a product $\circ$ on the space of linear functionals $f \colon T^+ \to \R$ by
\begin{equ}
(f \circ g)(\sigma) = (f \otimes g)\Delta^+ \sigma\;.
\end{equ}
If we furthermore denote by $\Gamma_f$ the operator $T$ associated to any such linear functional as in 
\eref{e:defineaction}, the first identity of \eref{e:propDelta} 
yields the identity $\Gamma_f \Gamma_g = \Gamma_{f\circ g}$. The second identity of \eref{e:propDelta} furthermore
ensures that if $f$ and $g$ are both multiplicative in the sense that $f(\sigma \bar \sigma) = f(\sigma) f(\bar \sigma)$,
then $f\circ g$ is again multiplicative. It also turns out that every multiplicative linear functional $f$ admits a 
unique inverse $f^{-1}$ such that $f^{-1} \circ f = f\circ f^{-1} = e$, where $e \colon T^+ \to \R$
maps every basis vector of the form \eref{e:genPolynom} to zero, except for $e(\one) = 1$.
The element $e$ is neutral in the sense that $\Gamma_e$ is the identity operator.

It is now natural to define the structure group $G$ associated to $T$ as the set of all multiplicative
linear functionals on $T^+$, acting on $T$ via \eref{e:defineaction}. Furthermore, for any admissible model,
one has the identity
\begin{equ}
\Gamma_{xy} = F_x^{-1} F_y = \Gamma_{\gamma_{xy}}\;,\qquad \gamma_{xy} = f_x^{-1} \circ f_y\;.
\end{equ}

How does all this help with the identification of a natural class of deformations for the usual product?
First, it turns out that for every continuous function $\xi$, if we denote again by $(\Pi,\Gamma)$ the model
$\iota \xi$, then the linear map $\PPi \colon T \to \CC$ given by
\begin{equ}
\PPi = \Pi_y F_y^{-1}\;,
\end{equ} 
which is independent of the choice of $y$ by the above discussion, 
is given by
\begin{equ}[e:idenPPi]
\bigl(\PPi \Xi\bigr)(x) = \xi(x)\;, \qquad
\bigl(\PPi X^k\bigr)(x) = x^k\;,
\end{equ}
and then recursively by
\begin{equ}
\PPi \tau \bar \tau = \PPi \tau \cdot \PPi \bar \tau\;,\qquad \PPi \CI \tau = K * \PPi \tau\;.
\end{equ}
Note that this is very similar to the definition of $\iota \xi$, with the notable exception
that \eref{e:defAdmissible} is replaced by the more ``natural'' identity $\PPi \CI \tau = K * \PPi \tau$.
It turns out that the knowledge of $\PPi$ and the knowledge of $(\Pi,\Gamma)$ are equivalent since one has
$\Pi_x = \Pi F_x$ and the map $F_x$ can be recovered from $\Pi_x$ by \eref{e:deffx}. (This argument appears 
circular but it is possible to put a suitable recursive structure on $T$ and $T^+$ ensuring that
this actually works.)
Furthermore, the translation $(\Pi,\Gamma) \leftrightarrow \PPi$ actually works for \textit{any} admissible model
and does not at all rely on the fact that it was built by lifting a continuous function. However, in the general case,
the first identity in \eref{e:idenPPi} does not of course not make any sense anymore and might fail even if the coordinates
of $\PPi$ consist of continuous functions. 

At this stage we note that if $\xi$ happens to be a stationary stochastic process and $\PPi$ is built
from $\xi$ by following the above procedure, then $\PPi \tau$ is a stationary stochastic process for every
$\tau \in T$. In order to define $\RR$, it is natural to consider only transformations of the space of admissible
models that preserve this property. Since we are not in general allowed to multiply components of $\PPi$,
the only remaining operation is to form linear combinations. It is therefore natural to describe elements of $\RR$ 
by linear maps $M \colon T \to T$ and to postulate their action on admissible models by $\PPi \mapsto \PPi^M$ with
\begin{equ}
\PPi^M \tau = \PPi M \tau\;.
\end{equ}
It is not clear \textit{a priori} whether given such a map $M$ and an admissible model
$(\Pi,\Gamma)$ there is a coherent way of building a new model $(\Pi^M, \Gamma^M)$ such that 
$\PPi^M$ is the map associated to $(\Pi^M, \Gamma^M)$ as above. It turns out that one has the following statement:

\begin{proposition}\label{prop:transform}
In the above context, for every linear map $M \colon T \to T$ commuting with $\CI$ and multiplication
by $X^k$, there exist \textit{unique}
linear maps $\Delta^M \colon T \to T \otimes T^+$ and $\hat \Delta^M \colon T^+ \to T^+ \otimes T^+$ such that if we set
\begin{equ}
\Pi_x^M \tau = \bigl(\Pi_x \otimes f_x\bigr)\Delta^M \tau \;,\qquad \gamma_{xy}^M(\sigma) = (\gamma_{xy} \otimes f_x)\hat \Delta^M\sigma\;,
\end{equ}
then $\Pi_x^M$ satisfies again \eref{e:defAdmissible} and the identity $\Pi_x^M \Gamma_{xy}^M = \Pi_y^M$.
\end{proposition}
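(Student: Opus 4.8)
The plan is to construct $\Delta^M$ and $\hat\Delta^M$ recursively, mimicking the recursive structure underlying the construction of $\Delta$ and $\Delta^+$, and then to verify the two required identities by induction on the number of integration operators $\CI$ appearing in a given basis element. First I would record the structural input: every basis element $\tau \in \CW$ is built from $\Xi$ and $X$ by the operations $\tau \mapsto \CI\tau$, $\tau \mapsto X^k\tau$, and multiplication; since $M$ is assumed to commute with $\CI$ and with multiplication by $X^k$, the only freedom $M$ exercises is on products of lower-order terms, and this is exactly the freedom we must propagate through $\Delta$. Concretely, I would set $\Delta^M \one = \one\otimes\one$, $\Delta^M X^k = X^k\otimes\one$, $\Delta^M \Xi = \Xi\otimes\one$ (using that $M$ fixes $\Xi$ up to lower-order corrections — here one uses the triangular structure of $M$), and then define $\Delta^M$ on $\CI\tau$ and on products by forcing the two compatibility relations we want to hold. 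The point is that these forced relations can be solved, uniquely, because of the triangularity: when we expand $\Delta^M(\CI\tau)$ or $\Delta^M(\sigma\bar\sigma)$, the ``new'' unknown pieces appear together with already-determined lower-order data, so the recursion closes.

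The key steps, in order, are: (i) fix the recursive ansatz for $\Delta^M$ on $\CW$ and for $\hat\Delta^M$ on the generators $X^k$ and $\CJ_\ell\tau$ of $T^+$, using multiplicativity of $\hat\Delta^M$ to reduce to these generators; (ii) impose that $\Pi^M_x$ as defined satisfies the admissibility identity \eref{e:defAdmissible} — this is what pins down the $\CJ_\ell$-components of $\hat\Delta^M$, since $\CJ(x)$ in \eref{e:defJ} is built from $\Pi_x$ and hence changes in a determined way when $\Pi$ is replaced by $\Pi^M$; (iii) impose the counit/coassociativity-type relations that are the analogue of \eref{e:propDelta} for the decorated objects, which forces the recursion for $\Delta^M$ on integrals; (iv) check that the recursion is consistent, i.e.\ that it terminates (finitely many $\CI$'s and finitely many elements below any homogeneity) and that the two ways of resolving a product agree, which is where one uses that $\Delta$, $\Delta^+$ already satisfy \eref{e:propDelta}; (v) finally verify $\Pi^M_x \Gamma^M_{xy} = \Pi^M_y$ by the same induction, reducing it at each step to the already-known identity $\Pi_x\Gamma_{xy} = \Pi_y$ together with the cocycle property $f_x^{-1}\circ f_y$ and the defining relations of $\hat\Delta^M$. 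Uniqueness is immediate once one observes that at each stage of the recursion the unknown is determined by an equation of the form ``unknown $=$ (explicit lower-order expression)''.

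The main obstacle will be step (iv): showing that the recursively defined $\Delta^M$ is genuinely well-defined, i.e.\ that the compatibility relation forced by admissibility and the one forced by the coassociativity analogue of \eref{e:propDelta} do not clash. This is the place where the algebraic backbone of the theory — the fact that $(T,\Delta)$ and $(T^+,\Delta^+)$ form a comodule over a Hopf-algebra-like structure — has to be used in an essential way rather than as a black box; one must show that the ``deformation by $M$'' is a morphism of this structure. In the full treatment in \cite{Regular} this is handled by working in a larger polynomial algebra and checking that $M$ extends to an algebra morphism there; the honest version of the proof therefore amounts to bookkeeping on trees, but the conceptual content is that $M$ commuting with $\CI$ and with $X^k$-multiplication is precisely the condition guaranteeing that this extension exists and is unique. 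I would therefore present (i)--(iii) and (v) as routine recursive constructions and concentrate the effort on (iv), referring to \cite{Regular} for the complete combinatorial verification.
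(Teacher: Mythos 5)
The paper states this proposition without proof (it is taken from \cite{Regular}), so there is no in-paper argument to compare against; your recursive strategy is, at the level of plan, consistent with the full proof given there. There is, however, one genuine hypothesis error. You justify $\Delta^M\Xi = \Xi\otimes\one$ by appealing to ``the triangular structure of $M$'' and to $M$ fixing $\Xi$ up to lower-order corrections. The proposition assumes only that $M$ commutes with $\CI$ and with multiplication by $X^k$; the upper-triangularity conditions (that $\Delta^M\tau - \tau\otimes\one$ and $\hat\Delta^M\sigma - \sigma\otimes\one$ live in strictly higher homogeneity) are \emph{not} hypotheses here --- they are introduced immediately afterwards as the defining conditions of $\RR$, precisely in order to cut the purely algebraic class of maps handled by this proposition down to those that also respect the analytic bounds of Definition~\ref{def:model}. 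Invoking them now is circular: they are properties of the output $\Delta^M$, $\hat\Delta^M$ that the proposition is supposed to produce (and which fail for generic $M$ commuting with $\CI$ and $X^k$), not permissible assumptions on the input. The base case of your recursion therefore needs a different justification and will not in general reduce to $\Xi\otimes\one$.

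Beyond that, the whole weight of the statement sits on your step (iv), which you defer entirely to \cite{Regular}. The recursion has two independent sources of constraints --- the admissibility identity \eref{e:defAdmissible} forcing $\Delta^M$ on elements of the form $\CI\tau$, and the cocycle identity $\Pi_x^M\Gamma_{xy}^M=\Pi_y^M$ forcing compatibility of $\Delta^M$ with $\hat\Delta^M$ through the $\CJ$-terms that feed into $f_x$ --- and the nontrivial content of the proposition is precisely that these can be solved simultaneously and uniquely. Declaring (i)--(iii) and (v) routine while deferring (iv) is defensible in a survey (the paper itself omits the proof), but it leaves the central claim unargued; in particular, the assertion that ``uniqueness is immediate'' is too quick, since it presupposes exactly the well-posedness of the recursion that step (iv) is meant to establish. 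If you retain the claim, you should at least exhibit the recursion concretely enough that the reader can see that at each stage the unknown is pinned down by a single equation rather than by two potentially conflicting ones.
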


At this stage it may look like \textit{any} linear map $M \colon T \to T$ commuting with $\CI$ and multiplication by $X^k$
yields a transformation on the space
of admissible models by Proposition~\ref{prop:transform}. This however is not true since we have completely
disregarded the \textit{analytical} bounds that every model has to satisfy.
It is clear from Definition~\ref{def:model} that these are satisfied if and only if $\Pi_x^M \tau$ is a linear combination
of the $\Pi_x \tau_j$ with $|\tau_j| \ge |\tau|$. This suggests the following definition. 

\begin{definition}
The renormalisation group $\RR$ consists of the set of linear maps $M \colon T \to T$ commuting with $\CI$
and with multiplication by $X^k$,
such that for $\tau \in T_\alpha$ and $\sigma \in T_\alpha^+$, one has 
\begin{equ}
\Delta^M \tau - \tau \otimes \one \in \bigoplus_{\beta > \alpha} T_\alpha \otimes T^+\;,\qquad
\hat \Delta^M \sigma - \sigma \otimes \one \in \bigoplus_{\beta > \alpha} T_\alpha^+ \otimes T^+\;.
\end{equ}
Its action on the space of admissible models is given by Proposition~\ref{prop:transform}.
\end{definition}

\subsection{The renormalised equations}

In the case of the dynamical $\Phi^4$ model considered in this article, 
it turns out that we only need a two-parameter subgroup of $\RR$
to renormalise the equations. More precisely, we consider elements $M \in \RR$ of the form
$M = \exp(- C_1 L_1 - C_2 L_2)$, where the two generators $L_1$ and $L_2$ are 
determined by the substitution rules
\begin{equ}
L_1 \colon \<2> \mapsto \one\;,\qquad L_2 \colon \<22> \mapsto \one\;.
\end{equ}
This should be understood in the sense that if $\tau$ is an arbitrary formal expression,
then $L_1 \tau$ is the sum of all formal expressions obtained from $\tau$ by performing
a substitution of the type $\<2> \mapsto 1$, and similarly for $L_2$. For example, one has
\begin{equ}
L_1 \<3> = 3 \<1>\;,\qquad L_1 \<12> = \<10> \;,\qquad L_2 \<32> = 3 \<1>\;.
\end{equ}
One then has the following result:

\begin{proposition}
The linear maps $M$ of the type just described belong to $\RR$. Furthermore, if $(\Pi,\Gamma)$ is an
admissible model such that $\Pi_x \tau$ is a continuous function for every $\tau \in T$, then one has the
identity
\begin{equ}[e:actionRenorm]
\bigl(\Pi_x^M \tau\bigr)(x) = \bigl(\Pi_x M \tau\bigr)(x)\;.
\end{equ}
\end{proposition}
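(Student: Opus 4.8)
The proposition bundles two essentially independent claims, and I would prove them in turn.

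\textbf{(i) $M\in\RR$.} By definition of $\RR$ I must check that $M$ commutes with $\CI$ and with every multiplication operator $\tau\mapsto X^k\tau$, and that the maps $\Delta^M,\hat\Delta^M$ furnished by Proposition~\ref{prop:transform} are upper-triangular in the stated sense. The operators on $T$ commuting with $\CI$ and with all the $X^k$-multiplications form a subalgebra, and $M=\exp(-C_1L_1-C_2L_2)$ restricts on each (finite-dimensional) $T_\alpha$ to a polynomial in $L_1,L_2$, so it suffices to treat the generators. This is pure combinatorics: $L_i$ acts on a basis tree by summing, over all sub-forests equal to $\<2>$ (resp.\ $\<22>$), the result of contracting that sub-forest to $\one$; since $\<2>$ and $\<22>$ are products of planted trees --- neither is of the form $\CI(\cdot)$ and neither carries a decoration at its root --- the sub-forests of $\CI\tau$ and of $X^k\tau$ of this shape correspond bijectively to those of $\tau$, which gives $L_i\CI=\CI L_i$ and $L_i(X^k\,\cdot)=X^k L_i(\cdot)$. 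For the triangularity, each $L_i$ replaces a sub-forest of negative homogeneity by $\one$ (homogeneity $0$), so $L_i$, and hence $M-\mathrm{id}$, maps $T_\alpha$ into $\bigoplus_{\beta>\alpha}T_\beta$; feeding this into the recursion producing $\Delta^M$ and $\hat\Delta^M$ in Proposition~\ref{prop:transform} and tracking homogeneities yields the two inclusions. This homogeneity accounting is the bulkiest, but entirely routine, part of (i).

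\textbf{(ii) The pointwise identity.} Fix an admissible model with all $\Pi_x\tau$ continuous, so that $\Pi_x$ is multiplicative, $(\Pi_x X^k)(y)=(y-x)^k$, and $\Pi_x\CI\rho$ equals $K*\Pi_x\rho$ minus its Taylor jet at $x$ of order $|\rho|+2$. Two observations organise the proof. First, evaluating at the base point: $(\Pi_x\CI\rho)(x)=0$ as soon as $|\rho|>-2$, and since $\Xi$ is the only symbol that can occur below an $\CI$ with $|\Xi|\le-2$, multiplicativity forces $(\Pi_x\sigma)(x)$ to vanish unless $\sigma$ is $\Xi$ or a product of at most three copies of $\CI\Xi$, i.e.\ $\sigma\in\{\Xi,\one,\<1>,\<2>,\<3>\}$, with values $\xi(x),1,v,v^2,v^3$ where $v:=(K*\Pi_x\Xi)(x)$. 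Second, $\Pi^M_x\tau=(\Pi_x\otimes f_x)\Delta^M\tau=\Pi_x(G^M_x\tau)$ with $G^M_x:=(\mathrm{id}\otimes f_x)\Delta^M\colon T\to T$. Combining the two, both $(\Pi^M_x\tau)(x)$ and $(\Pi_x M\tau)(x)$ are obtained by applying one and the same linear functional on $T$ --- read off the coefficients along $\Xi,\one,\<1>,\<2>,\<3>$ and pair them with $\xi(x),1,v,v^2,v^3$ --- to $G^M_x\tau$, respectively to $M\tau$. So the identity reduces to showing that $G^M_x\tau$ and $M\tau$ have equal components along $\Xi,\one,\<1>,\<2>,\<3>$; equivalently, that $\Delta^M\tau-M\tau\otimes\one$ has every first leg outside the span of $\Xi$ and the products of copies of $\CI\Xi$. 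Now $M=\mathrm{id}$ (hence $\Delta^M\tau=\tau\otimes\one$) on every tree carrying no $\<2>$ or $\<22>$ sub-forest; and when $|\tau|\ge0$ the triangularity from (i) makes the first legs of $\Delta^M\tau-\tau\otimes\one$ of strictly positive homogeneity, so none of them is among the listed symbols. One is thus left with the finitely many negative-homogeneity trees built over $\<2>$ or $\<22>$ --- namely $\<2>,\<3>,\<31>,\<32>,\<22>$ and their $X^k$-multiples. For each of these I would determine $\Delta^M\tau$ from the uniqueness clause of Proposition~\ref{prop:transform} (the unique element rendering $\Pi^M_x$ admissible and compatible with $\PPi^M=\PPi M$) and observe that the correction $\Delta^M\tau-M\tau\otimes\one$ consists only of re-centering ($\CJ$-type) terms, each carrying a genuine monomial factor $X^k$ with $k\ne0$ or a planted factor $\CI\rho$ with $\rho\ne\Xi$ --- hence none along $\Xi,\one,\<1>,\<2>,\<3>$ --- which closes the argument.

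\textbf{Main obstacle.} The delicate step is the last one: making $\Delta^M$ (equivalently $\Pi^M_x$) explicit on the handful of relevant trees and checking that renormalisation introduces no spurious component along $\Xi,\one,\<1>,\<2>,\<3>$, i.e.\ that $M$ commutes with the re-centering encoded by $F_x$ up to terms vanishing at the base point $x$. Everything else, including all the homogeneity bookkeeping in (i), is mechanical.
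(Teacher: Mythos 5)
The paper supplies no proof of this proposition (it is stated as fact and the reader is implicitly referred to \cite{Regular}), so your proposal has to be judged on its own merits rather than against a paper proof.

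Part (i) is a reasonable outline, and the commutation and homogeneity bookkeeping you describe is indeed where the work lies. Part (ii), however, contains a step that is simply false and that you lean on essentially. You open (ii) with ``Fix an admissible model with all $\Pi_x\tau$ continuous, so that $\Pi_x$ is multiplicative.'' Admissibility together with continuity of all $\Pi_x\tau$ does \emph{not} imply that $\Pi_x$ respects the product in the sense of \eref{e:product}. On the contrary: the paper stresses at the start of the renormalisation section that ``our abstract definitions of a model do not impose the identity \eref{e:product}, even in situations where $\xi$ itself happens to be a continuous function,'' and the renormalised models $M\iota\xi$ with $M\neq\mathrm{id}$ are precisely admissible continuous models that fail to be multiplicative. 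Once multiplicativity is dropped, your reduction breaks: admissibility only forces $(\Pi_x X^k)(x)=0$ for $k\neq 0$ and $(\Pi_x\CI\rho)(x)=0$ for $|\rho|>-2$; it says nothing about $(\Pi_x\sigma)(x)$ when $\sigma$ is a genuine product such as $\<32>$, so you cannot conclude that the ``evaluate at $x$'' functional is supported on $\{\Xi,\one,\<1>,\<2>,\<3>\}$.

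Concretely, the reduction to ``check that $\Delta^M\tau-M\tau\otimes\one$ has no first tensor leg in the span of $\Xi,\one,\<1>,\<2>,\<3>$'' is not sufficient at the generality of the statement. What must actually be shown is that the first tensor legs of $\Delta^M\tau - M\tau\otimes\one$ are killed at the base point for reasons guaranteed by admissibility alone, i.e.\ that they are of the form $X^k\cdot(\cdots)$ with $k\neq 0$ in such a way that $(\Pi_x(\cdot))(x)=0$ follows from the admissibility axioms and not from multiplicativity. That requires writing out $\Delta^M$ explicitly on the relevant trees, which is the part you defer to the ``main obstacle'' paragraph. For the specific model $\iota\xi$, which \emph{is} multiplicative by construction, your shortcut is correct and is the natural thing to do in these notes — but then you should hypothesise $(\Pi,\Gamma)=\iota\xi$ rather than claim multiplicativity as a consequence of continuity. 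As stated, your argument proves a weaker statement than the proposition.
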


\begin{remark}
Note that it it is the same value $x$ that appears twice on each side of \eref{e:actionRenorm}.
It is in fact \textit{not} the case that one has $\Pi_x^M \tau = \Pi_x M \tau$!
However, the identity \eref{e:actionRenorm} is all we need to derive the renormalised equations. 
\end{remark}

It is now rather straightforward to show the following:

\begin{proposition}
Let $M = \exp(- C_1 L_1 - C_2 L_2)$ as above and let $(\Pi^M,\Gamma^M) = M \iota \xi$ for some smooth function $\xi$.
Let furthermore $\Phi$ be the solution to \eref{e:abstractFull} with respect to the model $(\Pi^M,\Gamma^M)$. Then, the function
$u(t,x) = \bigl(\CR^M \Phi\bigr)(t,x)$ solves the equation
\begin{equ}
\d_t u = \Delta u - u^3 + (3C_1 - 9 C_2)u + \xi\;.
\end{equ}
\end{proposition}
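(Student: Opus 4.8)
The plan is to exploit the correspondence between the renormalised model $(\Pi^M,\Gamma^M)$ and the canonical one $\iota\xi$ in order to rewrite $\CR^M(\Phi^3)$ through a model for which products behave well, and thereby reduce the statement to a finite combinatorial computation on the lowest levels of $T$. In outline: (i) pass from $\CR^M$ to pointwise evaluations against the canonical model; (ii) apply $\CR^M$ to the abstract fixed point equation to reduce everything to computing $\CR^M(\Phi^3)$; (iii) expand $\Phi$ and $\Phi^3$ to the relevant order, act with $M=\exp(-C_1L_1-C_2L_2)$, and read off the answer. For (i): since $\xi$ is smooth, both $\iota\xi$ and $M\iota\xi=(\Pi^M,\Gamma^M)$ consist of continuous functions, so by Remark~\ref{rem:continuousModel} one has $(\CR^M g)(z)=(\Pi_z^M g(z))(z)$ for every $g\in\D^\gamma$ with $\gamma>0$. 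Combined with the identity \eref{e:actionRenorm}, $(\Pi_z^M\tau)(z)=(\Pi_z M\tau)(z)$, this expresses every such evaluation through the canonical model $(\Pi,\Gamma)=\iota\xi$, which obeys the convenient rules: $(\Pi_z(\sigma\bar\sigma))(z)=(\Pi_z\sigma)(z)\,(\Pi_z\bar\sigma)(z)$; $(\Pi_z X^k)(z)=0$ for $|k|>0$ and $(\Pi_z\one)(z)=1$; and, by admissibility, $(\Pi_z\CI\sigma)(z)=(K*\Pi_z\sigma)(z)-(\Pi_z\CJ(z)\sigma)(z)$, which vanishes whenever $|\sigma|>-2$ and equals $(K*\xi)(z)$ when $\sigma=\Xi$.

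For (ii), I would apply $\CR^M$ to \eref{e:abstractFull}: using the Schauder identity $\CR^M\CK h=K*\CR^M h$ (valid for any admissible model), the analogue $\CR^M\hat\CK h=\hat K*\CR^M h$, the relation $\CR^M(\CG\Phi_0)=\CG\Phi_0$, and $\CG=K+\hat K$, one obtains $u=\CG*\CR^M(\Xi-\Phi^3)+\CG\Phi_0$; since $\CR^M\Xi=\xi$, the function $u=\CR^M\Phi$ solves $\d_t u-\Delta u=\xi-\CR^M(\Phi^3)$ with initial datum $\Phi_0$. It therefore remains to prove
\begin{equ}
\CR^M(\Phi^3)=u^3-(3C_1-9C_2)\,u\;.
\end{equ}

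For (iii), I would iterate $\Phi\mapsto\CI(\Xi-\Phi^3)+(\ldots)$ a bounded number of times to get the truncated expansion
\begin{equ}
\Phi(z)=\<1>-\<30>-3\varphi(z)\,\<20>+\varphi(z)\,\one+\scal{\nabla\varphi(z),X}+(\text{higher-order terms})\;,
\end{equ}
with $\varphi=\scal{\one,\Phi}$. By the base-point rules this already gives $u=\CR^M\Phi=K*\xi+\varphi$, since every remaining term of $\Phi(z)$ either carries a factor $X^k$ with $|k|>0$ or has the form $\CI\sigma$ with $|\sigma|>-2$. Forming $\Phi^3$ via Theorem~\ref{theo:mult}, the monomials not evaluating to zero at $z$ after applying $M$ are
\begin{equ}
\<3>+3\varphi\,\<2>+3\varphi^2\,\<1>+\varphi^3\,\one-3\,\<32>-9\varphi\,\<22>\;.
\end{equ}
Applying $M=\exp(-C_1L_1-C_2L_2)$ termwise, the relevant contractions are $L_1\<3>=3\<1>$, $L_1\<2>=\one$, $L_2\<22>=\one$ (so $(\Pi_z M\<22>)(z)=-C_2$) and --- the contribution easiest to miss --- $L_2\<32>=3\<1>$ (so $(\Pi_z M\<32>)(z)=-3C_2(K*\xi)$). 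Evaluating $(\Pi_z M(\cdot))(z)$ on each surviving monomial with $(\Pi_z\<1>)(z)=(K*\xi)$, $(\Pi_z\<2>)(z)=(K*\xi)^2$, $(\Pi_z\<3>)(z)=(K*\xi)^3$, and discarding all terms containing $\CI\sigma$ with $|\sigma|>-2$, the contributions sum to
\begin{equ}
(K*\xi+\varphi)^3-3C_1(K*\xi+\varphi)+9C_2(K*\xi+\varphi)=u^3-(3C_1-9C_2)\,u\;,
\end{equ}
the $9C_2(K*\xi)$ needed to complete the last bracket coming from $-3\,\<32>$ through $L_2\<32>=3\<1>$, and the $9C_2\varphi$ from $-9\varphi\,\<22>$. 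Substituting into the equation for $u$ gives the claimed PDE.

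The main obstacle is precisely this last combinatorial step: one must produce the expansions of $\Phi$ and of $\Phi^3$ to exactly the right homogeneity, single out the monomials surviving base-point evaluation, and compute the extraction/contraction maps $L_1$, $L_2$ correctly on each tree --- including contributions from subtrees straddling several factors of a product. In particular, the identity $L_2\<32>=3\<1>$ is what upgrades the renormalisation constant from multiplying $\varphi$ to multiplying the full reconstruction $u=K*\xi+\varphi$; dropping it would produce the wrong equation. As elsewhere in this section, I would also suppress the technical point that the relevant function spaces are weighted versions of $\D^\gamma$ allowing a singularity on the hyperplane $\{t=0\}$.
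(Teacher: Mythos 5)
Your proposal is correct and follows the same route as the paper's proof: solve the fixed point to order slightly above one, write out $\Phi$ and $\Phi^3$ to the required homogeneity, apply $M=\exp(-C_1L_1-C_2L_2)$ term by term, and read off the renormalised PDE via \eref{e:actionRenorm} together with base-point evaluation in the canonical model. The only cosmetic difference is that the paper packages the computation as the algebraic identity $M(\Xi-\Phi^3)=\Xi-(M\Phi)^3+(3C_1-9C_2)M\Phi$ modulo positive homogeneity and then invokes \eref{e:actionRenorm} once, whereas you evaluate $\CR^M(\Phi^3)$ directly monomial by monomial; the substance, including the easy-to-miss $L_2\<32>=3\<1>$ contribution that upgrades the constant from multiplying $\varphi$ to multiplying all of $u$, is identical.
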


\begin{proof}
By Theorem~\ref{theo:mult}, it turns out that \eref{e:abstractFull} can be solved in $\D^\gamma$ as soon as $\gamma$ is a
little bit greater than $1$. Therefore, we only need to keep track of its solution $\Phi$ up to terms of homogeneity $1$.
By repeatedly applying the identity \eref{e:abstract}, we see that the solution $\Phi$ is necessarily of the form
\begin{equ}[e:decompuPhi]
\Phi = \<1> + \phi\, \one - \<30> - 3 \phi\, \<20> + \scal{\nabla \phi, X}\;,
\end{equ}
for some real-valued function $\phi$ and some $\R^3$-valued function $\nabla \phi$. (Note that $\nabla \phi$ is treated
as an independent function here, we certainly do not suggest that the function $\phi$ is differentiable! Our notation
is only by analogy with the classical Taylor expansion...) Similarly, the right hand side of the equation is given up to
order $0$ by
\begin{equ}
\Xi - \Phi^3 = \Xi - \<3> - 3\phi\, \<2> + 3 \<32> - 3 \phi^2\,\<1> + 6 \phi\, \<31> 
 + 9\phi\, \<22> - 3 \scal{\nabla\phi, \<2> X}
- \phi^3\,\one \;. \label{e:RHS}
\end{equ}
Combining this with the definition of $M$, it is straightforward to see that, modulo terms of strictly
positive homogeneity, one has
\begin{equs}
M (\Xi - \Phi^3) &= \Xi - (M\Phi)^3 + 3C_1 \<1> + 3C_1 \phi \one - 9 C_2 \<1> - 9C_2 \phi \one\\
& = \Xi - (M\Phi)^3 + (3C_1 - 9C_2) M\Phi\;.
\end{equs}
Combining this with \eref{e:actionRenorm}, the claim now follows at once. 
\end{proof}

\subsection{Convergence of the renormalised models}

It remains to argue why one expects to be able to find constants $C_1^\eps$ and $C_2^\eps$ such that the 
sequence of renormalised models $M^\eps \iota \xi_\eps$ converges to a limiting model. 
Instead of considering the actual sequence of models, we only consider the sequence of
stationary processes $\hat \PPi^\eps \tau := \PPi^\eps M^\eps \tau$, where $\PPi^\eps$ is associated
to $(\Pi^\eps, \Gamma^\eps) = \iota \xi_\eps$ as before. Since there are general arguments available to deal
with all the expressions $\tau$ of positive homogeneity, we restrict ourselves to those of negative
homogeneity which, leaving out $\Xi$ which is easy to treat, are given by
\begin{equ}
\<3>,\; \<2>,\; \<32>,\;\<1>,\; \<31>,\; \<22>,\; X_i \<2>\;.
\end{equ} 

For this section, some elementary notions from the theory of Wiener chaos expansions are required, but we will
try to hide this as much as possible.
At a formal level, one has the identity
\begin{equ}
\PPi^\eps \<1> = K * \xi_\eps = K_\eps * \xi\;,
\end{equ}
where the kernel $K_\eps$ is given by $K_\eps = K * \delta_\eps$. This shows that, at least formally, one has
\begin{equ}
\bigl(\PPi^\eps \<2>\bigr)(z) = \bigl(K * \xi_\eps\bigr)(z)^2 = \int\!\!\int K_\eps(z-z_1)K_\eps(z-z_2)\, \xi(z_1)\xi(z_2)\,dz_1\,dz_2\;.
\end{equ}
Similar but more complicated expressions can be found for any formal expression $\tau$. 
This naturally leads to the study of random variables of the type
\begin{equ}[e:defIk]
I_k(f) = \int \!\cdots\! \int f(z_1,\ldots,z_k)\, \xi(z_1)\cdots \xi(z_k)\, dz_1\cdots dz_k\;.
\end{equ}
Ideally, one would hope to have an It\^o isometry of the type $\E I_k(f)I_k(g) = \scal{f^\sym,g^\sym}$,
where $\scal{\cdot,\cdot}$ denotes the $L^2$-scalar product and $f^\sym$ denotes the symmetrisation of $f$.
This is unfortunately \textit{not} the case. Instead, one should replace the products in \eref{e:defIk}
by \textit{Wick products}, which are formally generated by all possible \textit{contractions} of the type
\begin{equ}
\xi(z_i)\xi(z_j) \mapsto \xi(z_i)\diamond \xi(z_j) + \delta(z_i - z_j)\;.
\end{equ} 
If we then set
\begin{equ}
\hat I_k(f) = \int \!\cdots\! \int f(z_1,\ldots,z_k)\, \xi(z_1)\diamond\cdots \diamond\xi(z_k)\, dz_1\cdots dz_k\;,
\end{equ}
One has indeed
\begin{equ}
\E \hat I_k(f) \hat I_k(g) = \scal{f^\sym,g^\sym}\;.
\end{equ}
See \cite{Nualart} for a more thorough description of this construction, which also
goes under the name of \textit{Wiener chaos}.
It turns out that one has equivalence of moments in the sense that, for every $k>0$ and $p>0$ there exists a constant $C_{k,p}$
such that 
\begin{equ}
\E |\hat I_k(f)|^p \le C_{k,p} \|f^\sym\|^p\le C_{k,p} \|f\|^p\;,
\end{equ}
where the second bound comes from the fact that symmetrisation is a contraction in $L^2$.
Finally, one has $\E \hat I_k(f) \hat I_\ell(g) = 0$ if $k \neq \ell$. Random variables of the form $\hat I_k(f)$
for some $k \ge 0$ and some square integrable function $f$ are said to belong to the \textit{$k$th
homogeneous Wiener chaos}.

Returning to our problem, we first argue that it should be possible to choose $M$ in such a way that 
$\hat \PPi^\eps \<2>$ converges to a limit as $\eps \to 0$.
The above considerations suggest that one should rewrite $\PPi^\eps \<2>$ as
\begin{equ}[e:Psi2]
\bigl(\PPi^\eps \<2>\bigr)(z) = \bigl(K * \xi_\eps\bigr)(z)^2 = \int\!\!\int K_\eps(z-z_1)K_\eps(z-z_2)\, \xi(z_1)\diamond\xi(z_2)\,dz_1\,dz_2 + C_\eps\;,
\end{equ}
where the constant $C_\eps$ is given by
\begin{equ}
C_\eps = \int K_\eps^2(z_1)\,dz_1 = \int K_\eps^2(z-z_1)\,dz_1\;.
\end{equ}
Note now that $K_\eps$ is an $\eps$-approximation of the kernel $K$ which has the same singular behaviour
as the heat kernel. In terms of the parabolic distance, the singularity of the heat kernel scales like
$K(z) \sim |z|^{-3}$ for $z \to 0$. (Recall that we consider the parabolic distance $|(t,x)| = \sqrt{|t|} + |x|$,
so that this is consistent with the fact that the heat kernel is bounded by $t^{-3/2}$.)
This suggests that one has $K_\eps^2(z) \sim |z|^{-6}$ for $|z| \gg \eps$. Since parabolic space-time has scaling dimension
$5$ (time counts double!), this is a non-integrable singularity. As a matter of fact, there is a whole power of $z$
missing to make it borderline integrable, which suggests that one has
\begin{equ}
C_\eps \sim {1\over \eps}\;.
\end{equ}
This already shows that one should not expect $\PPi^\eps \<2>$ to converge to a limit as $\eps \to 0$. However,
it turns out that the first term in \eref{e:Psi2} converges to a distribution-valued stationary space-time process,
so that one would like to somehow get rid of this diverging constant $C_\eps$. This is exactly where the renormalisation
map $M$ (in particular the factor $\exp(-C_1 L_1)$) enters into play. Following the above
definitions, we see that one has
\begin{equ}
\bigl(\hat \PPi^\eps \<2>\bigr)(z) = \bigl(\PPi^\eps M\<2>\bigr)(z) = \bigl(\PPi^\eps \<2>\bigr)(z)  - C_1\;.
\end{equ}
This suggests that if we make the choice $C_1 = C_\eps$, then $\hat \PPi^\eps \<2>$ does indeed converge to a 
non-trivial limit as $\eps \to 0$. This limit is a distribution given by
\begin{equ}
\bigl(\PPi^\eps \<2>\bigr)(\psi) = \int\!\!\int \psi(z) K(z-z_1)K(z-z_2)\,dz\, \xi(z_1)\diamond\xi(z_2)\,dz_1\,dz_2\;.
\end{equ}
Using again the scaling properties of the kernel $K$,
it is not too difficult to show that this yields indeed a random variable belonging to the second homogeneous
Wiener chaos for every choice of smooth test function $\psi$. Once we know that $\hat \PPi^\eps \<2>$ converges,
it is immediate that $\hat \PPi^\eps X\<2>$ converges as well, since this amounts to just multiplying a distribution
by a smooth function. 

A similar argument to what we did for $\<2>$ allows to take care of $\tau = \<3>$ since one then has
\begin{equs}
\bigl(\PPi^\eps \<3>\bigr)(z) &= \int\!\!\int K_\eps(z-z_1)K_\eps(z-z_2)K_\eps(z-z_3)\, \xi(z_1)\diamond\xi(z_2)\diamond\xi(z_3)\,dz_1\,dz_2\,dz_3 \\
&\quad + 3 \int \!\!\int K_\eps(z-z_1)K_\eps(z-z_2)K_\eps(z-z_3) \delta(z_1 - z_2)\,\xi(z_3)\,dz_1\,dz_2\,dz_3\;.
\end{equs}
Noting that the second term in this expression is nothing but
\begin{equ}
3 C_\eps \int K_\eps(z-z_1)\,\xi(z_1)\,dz_1 = 3C_\eps \bigl(\PPi^\eps \<1>\bigr)(z)\;,
\end{equ}
we see that in this case, provided again that $C_1 = C_\eps$, 
$\hat \PPi^\eps \<3>$ is given by only the first term in the expression
above, which turns out to converge to a non-degenerate limiting random distribution in a similar way to what happened
for $\<2>$. 

Turning to our list of terms of negative homogeneity, it remains to consider $\<31>$, $\<22>$,
and $\<32>$. It turns out that the latter two are the more difficult ones, so we only discuss these.
Let us first argue why we expect to be able to choose the constants $C_1$ and $C_2$ in such a way that
$\hat \PPi^\eps \<22>$ converges to a limit. In this case, the ``bad'' terms comes from the 
part of $\bigl(\PPi^\eps \<22>\bigr)(z)$ belonging to the homogeneous chaos of order $0$.
This is simply a constant, which turns out to be given by
\begin{equ}[e:defC2]
\hat C_\eps = 2\int K(z) Q_\eps^2(z)\,dz\;,
\end{equ}
where the kernel $Q_\eps$ is given by
\begin{equ}
Q_\eps(z) = \int K_\eps(\bar z) K_\eps(\bar z - z)\,d\bar z\;.
\end{equ}
Since $K_\eps$ is an $\eps$-mollification of a kernel with a singularity of order $-3$ and the scaling dimension
of the underlying space is $5$, we see that $Q_\eps$ behaves like an $\eps$-mollification of a kernel
with a singularity of order $-3-3+5 = -1$ at the origin. As a consequence, the singularity of the integrand in \eref{e:defC2}
is of order $-5$, which gives rise to a logarithmic divergence as $\eps \to 0$. This suggests that 
one should choose $C_2 = \hat C_\eps$ in order to cancel out this diverging term and obtain a non-trivial limit
for $\hat \PPi^\eps \<22>$ as $\eps \to 0$. This is indeed the case. 

We finally turn to the symbol $\<32>$. In this case, the ``bad'' terms appearing in the Wiener
chaos decomposition of $\PPi^\eps \<32>$ are the terms in the first homogeneous Wiener chaos, which 
are of the form
\begin{equ}[e:firstHomogen]
3\int \hat Q_\eps(z-z_1) K_\eps(z_1- z_2)\xi(z_2)\,dz_1\,dz_2 = 3\int \bigl(\hat Q_\eps * K_\eps\bigr)(z- z_2)\xi(z_2)\,dz_2\;,
\end{equ}
where $\hat Q_\eps$ is the kernel given by
\begin{equ}
\hat Q_\eps(z) = 2 K(z) Q_\eps^2(z)\;.
\end{equ}
As already mentioned above, the problem here is that as $\eps \to 0$, $\hat Q_\eps$ converges to a kernel
$\hat Q = 2 K Q^2$, which has a non-integrable singularity at the origin. In particular, the action of integrating a
test function against $\hat Q_\eps$ does not converge to a limiting distribution as $\eps \to 0$. 

This is akin to the problem of making sense of integration against a one-dimensional kernel 
with a singularity of type $1/|x|$ at the origin. For the sake of the argument, let us consider a
function $W\colon \R \to \R$ which is compactly supported and smooth everywhere except at the origin,
where it diverges like $W(x) \sim 1/|x|$. It is then natural to associate to $W$ a ``renormalised'' distribution
$\Ren W$ given by
\begin{equ}
\bigl(\Ren W\bigr)(\phi) = \int W(x) \bigl(\phi(x) - \phi(0)\bigr)\,dx\;.
\end{equ}
Note that $\Ren W$ has the property that if $\phi(0) = 0$, then it simply corresponds to integration against $W$,
which is the standard way of associating a distribution to a function. In a way, the extra term can be interpreted
as subtracting a Dirac distribution with an ``infinite mass'' located at the origin, thus cancelling out the
divergence of the non-integrable singularity. It is also straightforward to verify that if $W_\eps$ is a sequence
of smooth approximations to $W$ (say one has $W_\eps(x) = W(x)$ for $|x| > \eps$ and $W_\eps \sim 1/\eps$ otherwise),
then $\Ren W^\eps \to \Ren W$ in a distributional sense, and (using the usual correspondence between functions
and distributions) one has
\begin{equ}
\Ren W^\eps = W^\eps - \hat C_\eps \delta_0\;,\qquad \hat C_\eps = \int W^\eps(x)\,dx\;.
\end{equ}
The cure to the problem we are facing for showing the convergence of $\PPi^\eps \<32>$ is
virtually identical. Indeed,by choosing $C_2 = \hat C_\eps$ as in \eref{e:defC2}, 
the term in the first homogeneous Wiener chaos for $\hat \PPi^\eps \<32>$ corresponding to 
\eref{e:firstHomogen} is precisely given by
\begin{equs}
3\int \hat Q_\eps(z-z_1) &K_\eps(z_1- z_2)\xi(z_2)\,dz_1\,dz_2 - 3C_2 \int K_\eps(z- z_2)\xi(z_2)\,dz_2 \\
& = 3\int \bigl(\Ren \hat Q_\eps * K_\eps\bigr)(z- z_2)\xi(z_2)\,dz_2\;.
\end{equs}
It turns out that the convergence of $\Ren \hat Q_\eps$ to a limiting distribution $\Ren \hat Q$ takes
place in a sufficiently strong topology to allow to conclude that $\hat \PPi^\eps \<32>$ 
does indeed converge to a non-trivial limiting random distribution.

It should be clear from this whole discussion that while the precise values of the constants $C_1$
and $C_2$ depend on the details of the mollifier $\delta_\eps$, the limiting (random)
model $(\hat \Pi,\hat \Gamma)$ obtained in this way is independent of it. Combining this with the
continuity of the solution to the fixed point map \eref{e:abstractFull} and of the reconstruction
operator $\CR$ with respect to the underlying model, we see that the statement of Theorem~\ref{theo:Phi4}
follows almost immediately.

\bibliographystyle{./Martin}
\bibliography{./refs}

\def\cprime{$'$}
\begin{thebibliography}{DPD03}
\expandafter\ifx\csname url\endcsname\relax
  \def\url#1{\texttt{#1}}\fi
\expandafter\ifx\csname urlprefix\endcsname\relax\def\urlprefix{URL }\fi

\bibitem[AR91]{AlbRock91}
\textsc{S.~Albeverio} and \textsc{M.~R{\"o}ckner}.
\newblock Stochastic differential equations in infinite dimensions: solutions
  via {D}irichlet forms.
\newblock \emph{Probab. Theory Related Fields} \textbf{89}, no.~3, (1991),
  347--386.
\newblock \ifx\href\undefined
  \texttt{doi:10.1007/BF01198791}\else\href{http://dx.doi.org/10.1007/BF01198791}{\texttt{doi:10.1007/BF01198791}}\fi.

\bibitem[BG97]{MR1462228}
\textsc{L.~Bertini} and \textsc{G.~Giacomin}.
\newblock Stochastic {B}urgers and {KPZ} equations from particle systems.
\newblock \emph{Comm. Math. Phys.} \textbf{183}, no.~3, (1997), 571--607.
\newblock \ifx\href\undefined
  \texttt{doi:10.1007/s002200050044}\else\href{http://dx.doi.org/10.1007/s002200050044}{\texttt{doi:10.1007/s002200050044}}\fi.

\bibitem[BP08]{Bourgain}
\textsc{J.~Bourgain} and \textsc{N.~Pavlovi{\'c}}.
\newblock Ill-posedness of the {N}avier-{S}tokes equations in a critical space
  in 3{D}.
\newblock \emph{J. Funct. Anal.} \textbf{255}, no.~9, (2008), 2233--2247.
\newblock \ifx\href\undefined
  \texttt{doi:10.1016/j.jfa.2008.07.008}\else\href{http://dx.doi.org/10.1016/j.jfa.2008.07.008}{\texttt{doi:10.1016/j.jfa.2008.07.008}}\fi.

\bibitem[Dau88]{MR951745}
\textsc{I.~Daubechies}.
\newblock Orthonormal bases of compactly supported wavelets.
\newblock \emph{Comm. Pure Appl. Math.} \textbf{41}, no.~7, (1988), 909--996.
\newblock \ifx\href\undefined
  \texttt{doi:10.1002/cpa.3160410705}\else\href{http://dx.doi.org/10.1002/cpa.3160410705}{\texttt{doi:10.1002/cpa.3160410705}}\fi.

\bibitem[DPD03]{MR2016604}
\textsc{G.~Da~Prato} and \textsc{A.~Debussche}.
\newblock Strong solutions to the stochastic quantization equations.
\newblock \emph{Ann. Probab.} \textbf{31}, no.~4, (2003), 1900--1916.
\newblock \ifx\href\undefined
  \texttt{doi:10.1214/aop/1068646370}\else\href{http://dx.doi.org/10.1214/aop/1068646370}{\texttt{doi:10.1214/aop/1068646370}}\fi.

\bibitem[GLP99]{MR1661764}
\textsc{G.~Giacomin}, \textsc{J.~L. Lebowitz}, and \textsc{E.~Presutti}.
\newblock Deterministic and stochastic hydrodynamic equations arising from
  simple microscopic model systems.
\newblock In \emph{Stochastic partial differential equations: six
  perspectives}, vol.~64 of \emph{Math. Surveys Monogr.},  107--152. Amer.
  Math. Soc., Providence, RI, 1999.

\bibitem[Gub04]{Max}
\textsc{M.~Gubinelli}.
\newblock Controlling rough paths.
\newblock \emph{J. Funct. Anal.} \textbf{216}, no.~1, (2004), 86--140.
\newblock \ifx\href\undefined
  \texttt{doi:10.1016/j.jfa.2004.01.002}\else\href{http://dx.doi.org/10.1016/j.jfa.2004.01.002}{\texttt{doi:10.1016/j.jfa.2004.01.002}}\fi.

\bibitem[Gub10]{Trees}
\textsc{M.~Gubinelli}.
\newblock Ramification of rough paths.
\newblock \emph{J. Differential Equations} \textbf{248}, no.~4, (2010),
  693--721.
\newblock \ifx\href\undefined
  \texttt{doi:10.1016/j.jde.2009.11.015}\else\href{http://dx.doi.org/10.1016/j.jde.2009.11.015}{\texttt{doi:10.1016/j.jde.2009.11.015}}\fi.

\bibitem[Hai11]{RoughBurgers}
\textsc{M.~Hairer}.
\newblock Rough stochastic {PDE}s.
\newblock \emph{Comm. Pure Appl. Math.} \textbf{64}, no.~11, (2011),
  1547--1585.
\newblock \ifx\href\undefined
  \texttt{doi:10.1002/cpa.20383}\else\href{http://dx.doi.org/10.1002/cpa.20383}{\texttt{doi:10.1002/cpa.20383}}\fi.

\bibitem[Hai13]{KPZ}
\textsc{M.~Hairer}.
\newblock {Solving the KPZ equation}.
\newblock \emph{Ann. of Math. (2)} \textbf{178}, no.~2, (2013), 559--664.
\newblock \ifx\href\undefined
  \texttt{doi:10.4007/annals.2013.178.2.4}\else\href{http://dx.doi.org/10.4007/annals.2013.178.2.4}{\texttt{doi:10.4007/annals.2013.178.2.4}}\fi.

\bibitem[Hai14]{Regular}
\textsc{M.~Hairer}.
\newblock A theory of regularity structures.
\newblock \emph{Invent. Math.} (2014).
\newblock \ifx\href\undefined
  \texttt{doi:10.1007/s00222-014-0505-4}\else\href{http://dx.doi.org/10.1007/s00222-014-0505-4}{\texttt{doi:10.1007/s00222-014-0505-4}}\fi.

\bibitem[JLM85]{MR815192}
\textsc{G.~Jona-Lasinio} and \textsc{P.~K. Mitter}.
\newblock On the stochastic quantization of field theory.
\newblock \emph{Comm. Math. Phys.} \textbf{101}, no.~3, (1985), 409--436.

\bibitem[KPZ86]{KPZOrig}
\textsc{M.~Kardar}, \textsc{G.~Parisi}, and \textsc{Y.-C. Zhang}.
\newblock Dynamic scaling of growing interfaces.
\newblock \emph{Phys. Rev. Lett.} \textbf{56}, no.~9, (1986), 889--892.

\bibitem[LCL07]{LyonsStFlour}
\textsc{T.~J. Lyons}, \textsc{M.~Caruana}, and \textsc{T.~L{\'e}vy}.
\newblock \emph{Differential equations driven by rough paths}, vol. 1908 of
  \emph{Lecture Notes in Mathematics}.
\newblock Springer, Berlin, 2007.
\newblock Lectures from the 34th Summer School on Probability Theory held in
  Saint-Flour, July 6--24, 2004, With an introduction concerning the Summer
  School by Jean Picard.

\bibitem[Lyo98]{Lyons}
\textsc{T.~J. Lyons}.
\newblock Differential equations driven by rough signals.
\newblock \emph{Rev. Mat. Iberoamericana} \textbf{14}, no.~2, (1998), 215--310.

\bibitem[Mey92]{MR1228209}
\textsc{Y.~Meyer}.
\newblock \emph{Wavelets and operators}, vol.~37 of \emph{Cambridge Studies in
  Advanced Mathematics}.
\newblock Cambridge University Press, Cambridge, 1992.
\newblock Translated from the 1990 French original by D. H. Salinger.

\bibitem[Nua95]{Nualart}
\textsc{D.~Nualart}.
\newblock \emph{The {M}alliavin calculus and related topics}.
\newblock Probability and its Applications (New York). Springer-Verlag, New
  York, 1995.

\bibitem[PW81]{ParisiWu}
\textsc{G.~Parisi} and \textsc{Y.~S. Wu}.
\newblock Perturbation theory without gauge fixing.
\newblock \emph{Sci. Sinica} \textbf{24}, no.~4, (1981), 483--496.

\bibitem[Sim97]{MR1459795}
\textsc{L.~Simon}.
\newblock Schauder estimates by scaling.
\newblock \emph{Calc. Var. Partial Differential Equations} \textbf{5}, no.~5,
  (1997), 391--407.
\newblock \ifx\href\undefined
  \texttt{doi:10.1007/s005260050072}\else\href{http://dx.doi.org/10.1007/s005260050072}{\texttt{doi:10.1007/s005260050072}}\fi.

\bibitem[You36]{Young}
\textsc{L.~C. Young}.
\newblock An inequality of the {H}\"older type, connected with {S}tieltjes
  integration.
\newblock \emph{Acta Math.} \textbf{67}, no.~1, (1936), 251--282.

\end{thebibliography}

\end{document}